\newcommand{\matr}[1]{{\begin{pmatrix}#1\end{pmatrix}}}
\newcommand{\smatr}[1]{\big(\!{\begin{smallmatrix}#1\end{smallmatrix}}\!\big)}
\newcommand{\N}{\mathbb{N}}
\newcommand{\Q}{\mathbb{Q}}
\newcommand{\R}{\mathbb{R}}
\newcommand{\SL}{\operatorname{SL}}
\newcommand{\T}{\mathbb{T}}
\newcommand{\Z}{\mathbb{Z}}
\newcommand{\rmpars}[1]{\textnormal(#1\textnormal)}
\renewcommand{\phi}{\varphi}
\DeclareMathOperator{\Img}{im}
\newtheorem{lemma}{Lemma}
\newtheorem{theorem}[lemma]{Theorem}
\newtheorem{corollary}[lemma]{Corollary}
\theoremstyle{definition}
\newtheorem{example}[lemma]{Example}
\title{Discordant sets and ergodic Ramsey theory}
\author{Vitaly Bergelson}
\address{Department of Mathematics, Ohio State University, Columbus, OH 43210, USA}
\email{vitaly@math.ohio-state.edu}
\author{Jake Huryn}
\address{Department of Mathematics, Ohio State University, Columbus, OH 43210, USA}
\email{huryn.5@osu.edu}
\author{Rushil Raghavan}
\address{Department of Mathematics, Ohio State University, Columbus, OH 43210, USA}
\email{raghavan.63@osu.edu}
\subjclass[2020]{05D10, 37A44}
\begin{document}

\maketitle

\begin{abstract}
We explore the properties of non-piecewise syndetic sets with positive upper density, which we call \textit{discordant}, in  countably infinite  amenable (semi)groups.
Sets of this kind are involved in many questions of Ramsey theory and manifest the difference in complexity between the classical van der Waerden's theorem and Szemer\'{e}di's theorem.
We generalize and unify old constructions and obtain new results about these historically interesting sets.
Along the way, we draw from various corners of mathematics, including classical Ramsey theory, ergodic theory, number theory, and topological and symbolic dynamics.


\end{abstract}

\tableofcontents

\section{Introduction}

Let $A$ be a subset of the set $\N$ of positive integers. We begin by defining some notions of largeness that can be imposed on $A$.
\begin{itemize}
\item $A$ is an \textit{IP set} if it contains a \textit{finite sums set}, i.e., a set of the form \[\operatorname{FS}((a_n)_{n \in \N})={\left\{\sum_{i\in I}a_i\colon\text{$I\subseteq\N$ is finite and nonempty}\right\}}\] for some infinite sequence $(a_n)_{n \in \N}$ of positive integers.
\item $A$ is \textit{thick} if it contains arbitrarily long intervals: for each $\ell\in\N$, there exists $a\in\N$ such that $\{a,a+1,\dots,a+\ell-1\}\subseteq A$.
\item $A$ is \textit{syndetic} if its gaps are bounded: this means that, if we write $A=\{a_1<a_2<\cdots\}$, the successive difference $a_{i+1}-a_i$ is bounded.
Equivalently, $A$ is syndetic if there exist $t_1,\dots,t_r\in\N$ such that
$\N = \bigcup_{i = 1}^r A - t_i$, where $A - t = \{x \in \N \colon x + t \in A\}$.
\item $A$ is \textit{piecewise syndetic} if it can be written as the intersection of a thick set with a syndetic set.
Equivalently, $A$ is piecewise syndetic if there exist $t_1, \dots, t_r\in\N$ for which $\bigcup_{i = 1}^r A - t_i$ is thick.
\item The \textit{upper density of $A$} is \[\overline d(A)=\limsup_{n\to\infty}\frac{|A\cap\{1,\dots,n\}|}{n}.\] When the limit in question exists, this quantity is called the \textit{density of $A$}, denoted by $d(A)$. A set should be thought of as large in terms of (upper) density if this quantity is positive.
\end{itemize}

Although we have called each of the above properties a ``notion of largeness'', they are not on equal footing. For all these definitions, $\N$ is large, and if $A$ has a large subset, then $A$ is itself large. However, only some of these notions of largeness are \textit{partition regular}, meaning that if $A$ is large then, for any finite partition $A=\bigcup_{i=1}^r C_i$ into disjoint sets, at least one $C_i$ is large. This third property underlies much of classical Ramsey theory.
The partition regularity of IP sets is a famous theorem of Hindman \cite{Hi}, and the partition regularity of piecewise syndeticity is a classical fact, often rediscovered (see, e.g., \cite[Lemma 2.4]{Hi2} and \cite[Theorem 1.24]{Fur}).\footnote
{A related fact, that if $\N = \bigcup_{i = 1}^rC_i$ then at least one $C_i$ is piecewise syndetic, appears to have been first observed by Brown \cite[Lemma 1]{Br}.}
The partition regularity of positive upper density
is an easy exercise for the reader, as is checking that thickness and syndeticity fail to be partition regular.

One of the earliest partition results of Ramsey theory is van der Waerden's theorem, which says that given any finite partition $\N = \bigcup_{i = 1}^r C_i$, at least one $C_i$ is \textit{AP-rich}, i.e., contains arithmetic progressions of any finite cardinality \cite{vdW}. 
A useful equivalent form of this theorem states that the family of AP-rich sets is partition regular.
Van der Waerden's theorem has a corresponding density version, Szemer\'edi's theorem \cite{Sz}, one of whose many equivalent formulations guarantees that any set having positive upper
density is AP-rich.

In seeking a density version of Hindman's theorem, Erd\H os asked whether $d(A)>0$ is enough to imply that $A$ contains a shift of a finite sums set.
(Of course we cannot ask that $A$ contain a finite sums set; consider, e.g., the set of odd numbers.) This question was answered negatively in unpublished work of E.\ Straus, who showed that, for any $\varepsilon>0$, one can find $A\subseteq\N$ with $d(A)\geq 1-\varepsilon$ such that $A-n$ is not an IP set for any $n\in\Z$. In \cite[p.\ 105]{Er}, Erd\H os writes the following:
\begin{displayquote}
\small{I first hoped that Hindman's theorem on all subsums $\sum\varepsilon_ia_i$ can be generalised
for sets of positive density, but the following example of E. Straus seems to give a counterexample to all such attempts: Let $p_1<p_2<\cdots$ be a set of primes tending to infinity fast, and let $\sum\varepsilon_i<\infty$. Consider the set of integers $a_1<a_2<\cdots$ so that $a_i\not\equiv\alpha\pmod{p_i}$ where $|\alpha|\leq\varepsilon_ip_i$. The density of this sequence is $\prod_i(1-\varepsilon_i)>0$, and
it is not difficult to see that this sequence furnishes the required counterexample.}
\end{displayquote}
See \cite[Theorem 2.20]{BBHS} for more discussion and a detailed presentation of Straus' construction.

Recall that if $A$ is piecewise syndetic, then a union of finitely many shifts of $A$ is thick.
Since thick sets are obviously AP-rich, it follows from van der Waerden's theorem 
that one of the shifts of $A$ (and hence $A$ itself) is AP-rich.
This tempts us to ask, in analogy with Erd\H{o}s' question, whether $d(A)>0$ implies that $A$ is piecewise syndetic: a positive answer to this question would mean that we have just proven Szemer\'{e}di's theorem.
However, Straus' construction again provides a counterexample; it has positive density but is not piecewise syndetic.
Indeed, it is an easy exercise to show that any thick set is IP, so any piecewise syndetic set contains a shift of an IP set by Hindman's theorem.
Thus Straus' example cannot be piecewise syndetic.


It is  counterexamples of the above kind we are interested in---those sets which are non-piecewise syndetic but have positive (upper) density. We will call such sets \textit{discordant}. Another classical example of a discordant set is the set of squarefree numbers. However, this set has shifts which are IP \cite[Theorem 2.8]{BR}, and so has a different nature than Straus' counterexample.
More constructions of discordant sets can be found in \cite{BPS} along with applications to Ramsey theory.
A fruitful analogy can be drawn between discordant sets and nowhere dense sets of positive measure, both being small in a ``topological'' sense but large in an ``analytic'' sense. This analogy will be developed and more closely investigated in the following sections.

We will now more closely examine the relationship between van der Waerden's theorem, Szemer\'{e}di's theorem, and discordant sets.
We observed above, using van der Waerden's theorem, that any piecewise syndetic set is AP-rich.
On the other hand, given that every piecewise syndetic set is AP-rich, the partition regularity of piecewise syndeticity tells us that, for any finite partition $\N=\bigcup_{i=1}^rC_i$, there is some $i$ for which $C_i$ is AP-rich. Thus van der Waerden's theorem is equivalent, in this informal sense, to the fact that every piecewise syndetic set is AP-rich.
Because of this, the existence of discordant sets can be interpreted as a sign that Szemer\'edi's infamously difficult theorem is deeper than van der Waerden's.\footnote
{Another equivalent form of Szemer\'edi's theorem says that any subset of $\N$ having positive upper Banach density is AP-rich.
In contrast to density, for which one averages along the intervals $\{1, \dots, n\}$, the \textit{upper Banach density} of a set $A$ is defined to be the supremum of all densities of $A$ averaged along any sequence of intervals of increasing length.
Any piecewise syndetic set has positive upper Banach density, a fact which clarifies the relationship between van der Waerden's and Szemer\'edi's theorems and strengthens the perspective we take here.}
The striking abundance of contexts in which discordant sets arise, some of which will be provided in the ensuing sections, can be taken as evidence for the versatility of Szemer\'edi's theorem and strengthens our interest in discordant sets.

The distinction between van der Waerden's theorem and Szemer\'edi's theorem can also be profitably analyzed from the point of view of dynamics and ergodic theory. Indeed, as initiated by Furstenberg?s ergodic-theoretic proof of Szemer\'edi?s theorem \cite{Fur2}, one can elucidate the properties of a subset of $\N$ by examining the dynamical and ergodic properties of the orbit closure of its indicator function.
To be more explicit, given $A\subseteq\N$, consider its indicator function $\mathbf1_A\in\{0,1\}^\N$, where we view $\{0,1\}^\N$ as having the product topology (making $\{0,1\}^\N$ homeomorphic to the Cantor set). Then, letting $T \colon \{0,1\}^\N\to\{0,1\}^\N$ denote the left-shift map (i.e., $T(a_1,a_2,\dots) = (a_2,a_3,\dots))$, define the \textit{orbit closure} of $\mathbf{1}_A$ as
\[
\mathcal{O}(\mathbf{1}_A) = \overline{\{T^n(\mathbf1_A)\colon n\in\N\}}.
\]
The resulting topological dynamical system ($\mathcal O(\mathbf1_A), T)$ can be made into a measure-preserving system.
In this setting, the difference in difficulty between van der Waerden's and Szemer\'edi's theorems is indicated by the existence or nonexistence of nontrivial minimal subsystems of $\mathcal O(\mathbf1_A)$.
In the presence of a nontrivial minimal subsystem, a purely topological-dynamical recurrence theorem, the topological analogue to van der Waerden's theorem, can be used to obtain a streamlined proof of the classical van der Waerden theorem \cite[Theorem 0.3]{FW}.
On the other hand, when $\mathcal{O}(\mathbf1_A)$ does not possess nontrivial minimal subsystems, a considerably more difficult proof via measurable dynamics is necessary to prove Szemer\'edi's theorem.
In particular, those sets $A \subseteq \N$ for which $\mathcal{O}(\mathbf1_A)$ has a nontrivial minimal subsystem are exactly the piecewise syndetic sets (Theorem \ref{pssubsystems}). This dynamical and ergodic approach leads not only to a proof of these classical theorems but also to far-reaching generalizations which still elude purely combinatorial proofs, such as versions of the polynomial Szemer\'edi theorem (e.g. \cite[Theorem 1.1]{BLL} and \cite[Theorem 7.3]{BM}).

Finally, because the existence of discordant sets is a nontrivial byproduct of the interplay between the notions of density and piecewise syndeticity, both very basic definitions of Ramsey theory, these special sets are particularly nice vehicles for advancing Ramsey theory in different settings and showing the bountiful connections between Ramsey theory and other areas of mathematics. Therefore our paper is not only about discordant sets but also is meant as an accessible (but admittedly very non-exhaustive) demonstration of the diversity of Ramsey theory.

\subsection*{Outline of the paper} This paper concerns itself with the study of subsets of  countably infinite  semigroups, so some definitions must be introduced to allow us to work in this generality. This is done in Section \ref{the-basics}, where we also provide the necessary prerequisites from Ramsey theory, ergodic theory, and topological dynamics.

In Section \ref{irrational-rotation} we give an ergodic-theoretic construction of discordant sets that can be done in many semigroups, generalizing a construction based on irrational rotation of the torus $\R/\Z$. In particular, we prove that the sets of return times to certain nowhere dense sets of positive measure form discordant sets. This section lays the foundation for the analogy between nowhere dense sets of positive measure and discordant sets, which we revisit and reexamine several times.

In Section \ref{number-theory}, we prove in two ways that the squarefree numbers are discordant.
The first proof is purely algebraic while the second uses the dynamical results of the previous section.
We also give several wide-ranging generalizations of this fact.

Our view narrows in Section \ref{curious-settings}, where we take a brief excursion to $\SL_2(\Z)$.
Although the classical notions of density fail to exist in this group since $\SL_2(\Z)$ is not amenable, we define a natural analogue of density which allows us to construct discordant sets using the results of the preceding section.


In Section \ref{recurrence}, we study a special kind of discordant set which is particularly bad for recurrence in measure-preserving systems. We establish the existence of such discordant sets with density $c$ for each $c\in(0,1)$ for any  countably infinite  abelian group and also prove the existence of discordant sets with density $c$ for each $c\in (0,1)$ in any countably infinite commutative cancellative semigroup.

We shift focus in Section \ref{topology}. First, we generalize the notion of a \textit{disjunctive $\{0,1\}$-sequence}, one in which every finite $\{0,1\}$-word occurs, to an arbitrary  countably infinite  semigroup, and show that such sequences are ``topologically typical'' in the sense that the family of such sequences is comeager. This allows us to state that, in cancellative semigroups, non-piecewise syndetic sets (and hence discordant sets) are topologically atypical. We then consider special types of disjunctive sequences by measuring the limiting frequency with which every finite word occurs. In contrast to how one defines a \textit{normal sequence} by saying that these frequencies approach $2^{-n}$, where $n$ is the length of the word, we define an \textit{extremely non-averageable}, or \textit{ENA}, sequence by declaring that these frequencies should fail to converge as badly as possible. We prove that ENA sequences are topologically typical in arbitrary  countably infinite  amenable cancellative semigroups, strengthening the previous result for such semigroups.

Finally, in Section \ref{topol-dynamics} we give a topological-dynamical characterization of non-piecewise syndetic sets in arbitrary  countably infinite  semigroups by looking at their orbit closures. We conclude the paper by giving combinatorial applications of these dynamical results.

\subsection*{Acknowledgements}
We would like to thank Dona Strauss for helpful conversations. Her input was instrumental in guiding us toward the proof of Theorem \ref{BetterARall}.
We are also indebted to the anonymous referee for their numerous helpful comments and corrections.
The latter two authors were partially supported by the Ohio State Arts and Sciences Undergraduate Research Scholarship.

\section{Preliminaries}\label{the-basics}

Throughout this paper, $G$ will stand for a  countably infinite  semigroup. 

When we say that $G$ is a \textit{left cancellative} semigroup, we mean that for any $g,h_1,h_2\in G$, if $gh_1=gh_2$ then $h_1=h_2$; $G$ is \textit{right cancellative} if instead $h_1g=h_2g$ implies $h_1=h_2$; and $G$ is \textit{cancellative} if it is both left and right cancellative.

Given $g\in G$, define $gA=\{ga\colon a\in A\}$ and $g^{-1}A=\{x\in G\colon gx\in A\}$; the sets $Ag$ and $Ag^{-1}$ are defined analogously. Also, given $B\subseteq G$, define \[AB=\{ab\colon a\in A,b\in B\},\quad A^{-1}B=\bigcup_{a\in A}a^{-1}B,\quad\text{and}\quad AB^{-1}=\bigcup_{b\in B}Ab^{-1}.\] 

For any set $X$, we will write $\mathcal P(X)$ to denote the set of subsets of $X$. 
We write $Y^\mathrm{c}$ for the complement of $Y$, when the superset is understood. In particular, for $A\subseteq G$, $A^\mathrm{c}$ always denotes the complement relative to $G$.

\subsection{Thickness, syndeticity, and piecewise syndeticity} Let $A\subseteq G$.
\begin{itemize}
\item $A$ is \textit{right thick} if for any finite $F\subseteq G$ there exists $g\in G$ such that $Fg\subseteq A$.
\item $A$ is \textit{right syndetic} if there exists some finite $H\subseteq G$ such that $G=H^{-1}A$.
\item $A$ is \textit{right piecewise syndetic} if there is a finite $H\subseteq G$ for which $H^{-1}A$ is right thick.
\end{itemize}
Since $G$ is not assumed to be commutative, there are also ``left'' versions of these definitions obtained by replacing $Fg$ with $gF$ and $H^{-1}A$ with $AH^{-1}$.
We warn the reader that the left and right versions do not necessarily coincide in noncommutative (semi)groups.\footnote{Here are some examples to show that the left- and right-handed notions really are different.
In $F_2$, the free group on $a$ and $b$, it is easily seen that $F_2a$ is right thick but not left thick, and left syndetic but not right syndetic.
For a less noncommutative example, consider the following subset of the Heisenberg group $H_3(\Z)$:
\[A={\left\{{\left(\begin{smallmatrix}1&x&y\\0&1&z\\0&0&1\end{smallmatrix}\right)}\in H_3(\Z)\colon|y|\leq z\right\}}.\]
It can be shown that $A$ is left thick but not right piecewise syndetic (and thus not right thick). The group $H_3(\Z)$ is ``almost abelian'', having nilpotency class $2$, so we see from this example that not much noncommutativity is needed to separate the right- and left-handed notions.
See \cite[Section 2]{BHM} for many similar examples.}
Henceforth we will deal only with the right-handed definitions and omit the word ``right''.

Note that, if $(F_n)$ is an increasing sequence of finite subsets of $G$ satisfying $G=\bigcup F_n$, then $A\subseteq G$ is thick if and only if $\bigcup F_ng_n\subseteq A$ for some sequence $(g_n)$ in $G$.

We now outline the relationship between thick, syndetic, and piecewise syndetic sets. First, the family of syndetic sets and the family of thick sets are \textit{dual}, in the following sense.

\begin{lemma}\label{ST-duality}
Let $A\subseteq G$.
Then $A$ is syndetic if and only if $A^\mathrm{c}$ is not thick.
\end{lemma}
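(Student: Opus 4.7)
The plan is to prove both directions by directly unwinding the definitions; no deeper machinery is needed because syndeticity and thickness are set up as exact duals once one parses the notation for $H^{-1}A$.

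First I would spell out the two negations. By the definitions in Section~\ref{the-basics}, $A$ is syndetic means there is a finite $H\subseteq G$ with $G = H^{-1}A = \bigcup_{h\in H}h^{-1}A$, which translates to: there exists a finite $H\subseteq G$ such that for every $g\in G$ there is some $h\in H$ with $hg\in A$. On the other hand, $A^{\mathrm c}$ is thick means that for every finite $F\subseteq G$ there exists $g\in G$ with $Fg\subseteq A^{\mathrm c}$, i.e.\ $fg\notin A$ for all $f\in F$. Negating, $A^{\mathrm c}$ is not thick means there exists a finite $F\subseteq G$ such that for every $g\in G$ there is some $f\in F$ with $fg\in A$.

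Once these reformulations are on the page, the two statements are visibly identical: in each case, one asserts the existence of a finite ``witness set'' (either $H$ or $F$) such that every $g\in G$ can be translated by an element of the witness set to land in $A$. So for the forward direction I would take $F:=H$, and for the reverse direction I would take $H:=F$, in each case verifying that the chosen set satisfies the condition needed.

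There is essentially no obstacle here; the only potential pitfall is sorting out the asymmetry between left and right multiplication in the definitions of $H^{-1}A$ and $Fg$. Since the paper has fixed the right-handed conventions, one must check that the two definitions pair up correctly: the element $h\in H$ with $hg\in A$ in the syndeticity definition matches the element $f\in F$ with $fg\in A$ in the negation of thickness, and this is why the plain choice $F=H$ works without any cancellativity assumption on $G$.
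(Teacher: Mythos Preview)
Your proposal is correct and takes essentially the same approach as the paper: both simply unwind the definitions of syndeticity and thickness and observe that they match. The only cosmetic difference is that the paper proves the reverse implication by contrapositive (showing ``$A$ not syndetic $\Rightarrow$ $A^{\mathrm c}$ thick''), whereas you negate thickness up front and observe the two reformulated statements coincide; the content is identical.
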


\begin{proof}
Suppose $A$ is syndetic, so $G = H^{-1}A$ for some finite $H \subseteq G$.
Let $g \in G$.
Since $g \in H^{-1}A$, there exists $h \in H$ with $hg \in A$.
Hence $Hg \cap A \neq \varnothing$, i.e., $Hg \nsubseteq A^\mathrm{c}$.
This proves that $A^\mathrm{c}$ is not thick.

Conversely, suppose $A$ is not syndetic, and let $F \subseteq G$ be finite. Since $F^{-1}A \neq G$, find $g\in G$ such that $g\notin F^{-1}A$.
Then $Fg\subseteq A^\mathrm{c}$, so $A^\mathrm{c}$ is thick.
\end{proof}

From this duality we obtain a useful equivalent description of piecewise syndetic sets, which we described in introduction for subsets of $(\N,+)$.

\begin{lemma}\label{intersections}
Let $A\subseteq G$.
Then $A$ is piecewise syndetic if and only if there exist a syndetic set $S\subseteq G$ and a thick set $T\subseteq G$ such that $A=S\cap T$.
\end{lemma}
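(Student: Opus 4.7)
The plan is to prove the two directions separately.

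\emph{For the ``if'' direction}, suppose $A = S \cap T$ with $S$ syndetic and $T$ thick. I would fix a finite $H \subseteq G$ witnessing $G = H^{-1}S$ and check that this same $H$ makes $H^{-1}A$ thick. Given any finite $F \subseteq G$, applying thickness of $T$ to the finite set $HF$ yields $g \in G$ with $HFg \subseteq T$. Then for each $f \in F$, since $fg \in G = H^{-1}S$ there is $h \in H$ with $hfg \in S$; combined with $hfg \in HFg \subseteq T$, we obtain $hfg \in S \cap T = A$, i.e.\ $fg \in H^{-1}A$. Hence $Fg \subseteq H^{-1}A$, establishing thickness.

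\emph{For the ``only if'' direction}, suppose $H^{-1}A$ is thick for some finite $H$. The natural candidates are $T := A \cup H^{-1}A$ (thick, as it contains $H^{-1}A$) and $S := A \cup T^{\mathrm{c}}$. Since $A \subseteq T$, one gets $S \cap T = A$ immediately, so by Lemma \ref{ST-duality} it remains only to show that $S^{\mathrm{c}} = T \setminus A = H^{-1}A \setminus A$ is not thick. I would prove this by contradiction, using the finite set $F' := H \cup H^2$: if $H^{-1}A \setminus A$ were thick, there would exist $g \in G$ with $F'g \subseteq H^{-1}A \setminus A$; fixing any $h_0 \in H$, we would have $h_0 g \in H^{-1}A$, hence $h_1 h_0 g \in A$ for some $h_1 \in H$. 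But $h_1 h_0 \in H^2 \subseteq F'$ forces $h_1 h_0 g \in F' g \subseteq A^{\mathrm{c}}$, contradicting $h_1 h_0 g \in A$.

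\emph{The main obstacle} is the contradiction argument in the ``only if'' direction. A first instinct might be to apply thickness to $F' = H$, but this fails: knowing $hg \notin A$ for every $h \in H$ is perfectly compatible with an iterated product $h_1 h_0 g$ lying in $A$. The key is that the witness to $h_0 g \in H^{-1}A$ produces an element $h_1 h_0$ that naturally sits in $H^2$, so enlarging $F'$ to $H \cup H^2$ simultaneously traps $h_1 h_0 g$ inside $F'g$ and thus outside $A$. This gives the conclusion in arbitrary countably infinite semigroups without any cancellativity hypothesis or appeal to an identity element.
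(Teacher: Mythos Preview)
Your proof is correct and uses the same decomposition $T = A \cup H^{-1}A$, $S = A \cup T^{\mathrm c}$ as the paper; the ``if'' direction is verbatim the paper's argument. The only difference is cosmetic: for the ``only if'' direction the paper verifies syndeticity of $S$ directly by exhibiting a witness set $Hx \cup \{x\}$ (for any fixed $x \in G$), whereas you invoke Lemma~\ref{ST-duality} and show by contradiction that $S^{\mathrm c} = H^{-1}A \setminus A$ is not thick using $F' = H \cup H^2$ --- unwinding your contradiction shows precisely that $G = (H \cup H^2)^{-1}S$, which is the paper's statement with $x \in H$.
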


\begin{proof}
First suppose $A$ is piecewise syndetic, and find a finite $H\subseteq G$ such that $H^{-1}A$ is thick. Let $T=A\cup H^{-1}A$, and let $S=A\cup T^\mathrm{c}$, so that $A=S\cap T$.
Pick any $x \in G$; we will show that $G = (Hx \cup \{x\})^{-1}S$.
Indeed, let $g \in G$.
If $g \notin x^{-1}S$, then $xg \in S^\mathrm{c} = A^\mathrm{c} \cap T \subseteq H^{-1}A$, so $g \in (Hx)^{-1}S$.

Now suppose $S\subseteq G$ is syndetic and $T\subseteq G$ is thick. Fix a finite $H\subseteq G$ satisfying $G=H^{-1}S$. Let $F\subseteq G$ be finite and pick $g\in G$ satisfying $HFg\subseteq T$. By the syndeticity of $S$, for any $f\in F$ we can find $h\in H$ such that $fg\in h^{-1}S$.
We also know that $hfg\in T$, so $fg \in h^{-1}(S \cap T)$. This proves that $Fg\subseteq H^{-1}(S\cap T)$, and hence that $H^{-1}(S\cap T)$ is thick.
\end{proof}

\subsection{Densities in semigroups}\label{densities}

In this subsection, $G$ is assumed to be a  countably infinite  amenable cancellative semigroup. Given these assumptions, amenability is equivalent to the existence of a F\o lner sequence.\footnote
{Amenability is commonly defined as follows. For each $g\in G$, define $\lambda_g\colon G\to G$ by $\lambda_g(h)=gh$. Then $G$ is called (\textit{left}) \textit{amenable} if there exists a positive normalized linear functional $L$ on the space $B(G)$ of bounded functions $G\to\R$ which satisfies $L(f\circ\lambda_g)=L(f)$ for each $f\in B(G)$; $L$ is called an \textit{invariant mean} for $G$. For  countably infinite  left cancellative semigroups, the existence of F\o lner sequences and amenability are equivalent \cite{AW}. See \cite{Pa} for a comprehensive study of the notion of amenability.}
A (\textit{left}) \textit{F\o lner sequence} in $G$ is a sequence $\Phi=(\Phi_n)$ of nonempty finite subsets of $G$ satisfying
\[\lim_{n\to\infty}\frac{|\Phi_n\cap g\Phi_n|}{|\Phi_n|}=1,
\quad\text{or equivalently}\quad
\lim_{n\to\infty}\frac{|\Phi_n\bigtriangleup g\Phi_n|}{|\Phi_n|}=0,\]
for any $g\in G$. The \textit{upper density} of $A\subseteq G$ with respect to $\Phi$ is defined as \[\overline d_\Phi(A)=\limsup_{n\to\infty}\frac{|A\cap \Phi_n|}{|\Phi_n|}.\] When the corresponding limit exists this quantity is the \textit{density} of $A$ with respect to $\Phi$, denoted by $d_\Phi(A)$, and we also define the \textit{lower density} $\underline d_\Phi(A)$ to be the corresponding $\liminf$. Note that $\underline d_\Phi(A)=1-\overline d_\Phi(A^{\mathrm c})$.
Observe also that it follows quickly from the pigeonhole principle and the cancellativity of $G$ that F\o lner sequences satisfy $|\Phi_n|\to\infty$, guaranteeing a degree of non-degeneracy for these definitions.

A subset $A\subseteq G$ will be called \textit{discordant} if it is not piecewise syndetic and there is some F\o lner sequence $\Phi$ in $G$ such that $\overline d_\Phi(G)>0$. We would like to also guarantee, when possible, the stronger condition $\underline d_\Phi(A)>0$, or better, $d_\Phi(A)>0$.

In the introduction, we defined density in $(\N,+)$ with respect to the F\o lner sequence $(\{1,\dots,n\})_{n\in\N}$. We will reserve $d$, leaving the F\o lner sequence unspecified, to refer to this density (and similarly with $\overline d$ and $\underline d$). A set $A\subseteq\N$ for which $d(A)$ exists is sometimes said to have \textit{natural density}.
Closely related are densities defined with respect to the F\o lner sequences $(\{-n,\dots,n\})$ in $\Z$ and $(\{-n,\dots,n\}^d)$ in $\Z^d$; we will also use $d$ to refer to these without danger of ambiguity.


Upper density has some useful properties, which we briefly describe here.
The most primitive is that it is invariant under shifts.
In other words, for any $g\in G$ and $A\subseteq G$, one has $\overline d_\Phi(gA)=\overline d_\Phi(g^{-1}A)=\overline d_\Phi(A)$.
We record the following similar result as we will use it later on.
(The proof of shift-invariance can be seen in the proof of Lemma \ref{shift-additivity}.)

\begin{lemma}\label{shift-additivity}
Suppose $g\in G$ and $A \subseteq G$ satisfy $A\cap gA=\varnothing$.
Then $\overline d_\Phi(A\cup gA)=2\,\overline d_\Phi(A)$.
\end{lemma}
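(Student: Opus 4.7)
The plan is to reduce the statement to two ingredients: the disjointness hypothesis, which turns a union into a sum of cardinalities, and the left F\o lner property, which lets us replace $g\Phi_n$ by $\Phi_n$ up to negligible error. Since $A\cap gA=\varnothing$, for each $n$ we have $|(A\cup gA)\cap\Phi_n|=|A\cap\Phi_n|+|gA\cap\Phi_n|$. So once we show that $|gA\cap\Phi_n|/|\Phi_n|$ and $|A\cap\Phi_n|/|\Phi_n|$ differ by a vanishing error, dividing by $|\Phi_n|$ and passing to the $\limsup$ (using that $\limsup(a_n+b_n)=\limsup a_n$ whenever $b_n\to 0$) yields the desired equality $\overline d_\Phi(A\cup gA)=2\,\overline d_\Phi(A)$.

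For the comparison, the key identity is $gA\cap g\Phi_n=g(A\cap\Phi_n)$. Because $G$ is left cancellative, the map $a\mapsto ga$ is injective, so $|gA\cap g\Phi_n|=|A\cap\Phi_n|$. On the other hand, the sets $gA\cap\Phi_n$ and $gA\cap g\Phi_n$ agree on the intersection $\Phi_n\cap g\Phi_n$, so their symmetric difference lies inside $\Phi_n\triangle g\Phi_n$. This gives the bound
\[
\bigl|\,|gA\cap\Phi_n|-|A\cap\Phi_n|\,\bigr|\leq|\Phi_n\triangle g\Phi_n|.
\]

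Dividing through by $|\Phi_n|$, the right-hand side tends to $0$ by the defining property of a left F\o lner sequence. Combining with the disjointness observation gives
\[
\frac{|(A\cup gA)\cap\Phi_n|}{|\Phi_n|}=2\cdot\frac{|A\cap\Phi_n|}{|\Phi_n|}+o(1),
\]
and taking $\limsup$ finishes the proof. There is no real obstacle here; the only point requiring attention is ensuring that left cancellativity (built into the running hypotheses of Subsection \ref{densities}) is invoked to justify $|gA\cap g\Phi_n|=|A\cap\Phi_n|$, and that the F\o lner condition is used in its left form $|\Phi_n\triangle g\Phi_n|/|\Phi_n|\to 0$, matching the convention set earlier. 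The same argument with the trivial case $A=A\cup gA\cdot\mathbf{1}_{\{e\}}$ replaced appropriately specializes to the shift-invariance statement $\overline d_\Phi(gA)=\overline d_\Phi(A)$ mentioned just before the lemma.
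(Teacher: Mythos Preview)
Your proof is correct and uses the same core ingredients as the paper's: disjointness to split the union, left cancellativity to get $|gA\cap g\Phi_n|=|A\cap\Phi_n|$, and the F\o lner condition to absorb the error $|\Phi_n\triangle g\Phi_n|/|\Phi_n|\to 0$. The paper argues the two inequalities separately (subadditivity for $\leq$, then an $\varepsilon/3$ argument for $\geq$), whereas you obtain the single asymptotic $|(A\cup gA)\cap\Phi_n|/|\Phi_n|=2\,|A\cap\Phi_n|/|\Phi_n|+o(1)$ and pass to the $\limsup$ once; this is a mild streamlining rather than a genuinely different method. (Your closing remark with ``$A=A\cup gA\cdot\mathbf{1}_{\{e\}}$'' is garbled and should be dropped or rewritten, but it plays no role in the argument.)
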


\begin{proof}
It is clear that $\overline{d}_\Phi$ is subadditive.
For the opposite inequality, let $\varepsilon>0$ and find $n \in \N$ such that
\[
\frac{|A \cap \Phi_n|}{|\Phi_n|} > \overline{d}(A) - \frac\varepsilon3
\quad\text{and}\quad
\frac{|g\Phi_n \setminus \Phi_n|}{|\Phi_n|} < \frac\varepsilon3.
\]
Then
\[
\frac{|(A \cup gA) \cap \Phi_n|}{|\Phi_n|}
\geq
\frac{|A \cap \Phi_n|}{|\Phi_n|}
+
\frac{|gA \cap g\Phi_n|}{|\Phi_n|}
-
\frac{|g\Phi_n \setminus \Phi_n|}{|\Phi_n|}.
\]
Now since $|gA \cap g\Phi_n| = |g(A \cap \Phi_n)| = |A \cap \Phi_n|$ by cancellativity, we get
\[
\frac{|(A \cup gA) \cap \Phi_n|}{|\Phi_n|}
>
{\left(\overline{d}_\Phi(A) - \frac{\varepsilon}{3}\right)}
+
{\left(\overline{d}_\Phi(A) - \frac{\varepsilon}{3}\right)}
-
\frac{\varepsilon}{3}
>
2\,\overline{d}_\Phi(A) - \varepsilon.
\qedhere
\]
\end{proof}

Finally, let us remark that density is related to thickness and syndeticity in the following way: $A$ is thick if and only if $\overline{d}_\Phi(A)=1$ for some $\Phi$, and $A$ is syndetic if and only if $\overline{d}_\Phi(A)>0$ for all $\Phi$; see \cite[Theorem 2.6]{BHM}.
More discussion of thickness, syndeticity, piecewise syndeticity, the various notions of density, and other definitions of Ramsey theory can be found in, among other places, \cite[Sections 1 and 2]{BHM}.

\subsection{Partition regularity and Straus' construction}\label{partition-regularity}

To begin this subsection we will prove two lemmas about piecewise syndeticity and partition regularity. The following lemma states that the family of piecewise syndetic subsets of any countably infinite semigroup is partition regular.
A different proof of this fact appears in \cite[Lemma 7.2]{Hi3}.
Since, as we will see, non-piecewise syndetic sets can be seen as discrete analogues of nowhere dense sets, Theorem \ref{brown} is a sort of discrete analogue to the Baire category theorem on unions of nowhere dense sets. We will not use Theorem \ref{brown} later on, but include it for completeness and to support the discussion given in the introduction and the analogy we are hoping to emphasize.
A different proof, using topological dynamics, will be presented in the final section (Theorem \ref{brown2}).

\begin{theorem}\label{brown}
Suppose $A\subseteq G$ is piecewise syndetic. Then for any finite partition $A=\bigcup_{i=1}^rC_i$ into disjoint sets, there exists $i$ for which $C_i$ is piecewise syndetic.
\end{theorem}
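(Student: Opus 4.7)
The plan is to reduce, via induction on $r$, to the following intermediate claim: \emph{if $T \subseteq G$ is thick and $T = T_1 \cup T_2$, then at least one of $T_1, T_2$ is piecewise syndetic}. The base case $r = 1$ is trivial, and for $r \geq 2$, we fix finite $H \subseteq G$ with $H^{-1}A$ thick and apply the intermediate claim to
\[
H^{-1}A = H^{-1}C_1 \cup H^{-1}{\left(\bigcup_{i = 2}^r C_i\right)}.
\]
Either $H^{-1}C_1$ is piecewise syndetic, whence $K^{-1}H^{-1}C_1 = (HK)^{-1}C_1$ is thick for some finite $K$ and so $C_1$ is piecewise syndetic, or the second piece is piecewise syndetic, in which case the same shift observation shows $\bigcup_{i = 2}^r C_i$ is piecewise syndetic and the inductive hypothesis finishes the job.

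To prove the intermediate claim, fix an increasing exhaustion $(F_n)$ of $G$ by finite sets and use thickness of $T$ to pick $g_n \in G$ with $F_n g_n \subseteq T$. Define $\chi_n \colon F_n \to \{1, 2\}$ by $\chi_n(f) = 1$ if $fg_n \in T_1$ and $\chi_n(f) = 2$ otherwise; extend arbitrarily to all of $G$. Compactness of $\{1, 2\}^G$ in the product topology yields a subsequence $(\chi_{n_k})$ converging pointwise to some $\chi \colon G \to \{1, 2\}$. Let $G_i = \chi^{-1}(i)$. The feature of the limit we will exploit is that for every finite $F' \subseteq G$, for all sufficiently large $k$ we have $F' \subseteq F_{n_k}$ and $fg_{n_k} \in T_{\chi(f)}$ for all $f \in F'$.

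Now Lemma \ref{ST-duality} supplies the dichotomy: either $G_1$ is thick, or (equivalently, since $G_2 = G_1^\mathrm{c}$) $G_2$ is syndetic. In the first case, for any finite $F \subseteq G$ pick $h$ with $Fh \subseteq G_1$; convergence on $Fh$ then furnishes $k$ with $Fh g_{n_k} \subseteq T_1$, so $T_1$ is thick and in particular piecewise syndetic. In the second case, fix finite $K$ with $K^{-1}G_2 = G$; given a finite $F$, for each $f \in F$ choose $k_f \in K$ with $k_f f \in G_2$, and set $F' = \{k_f f : f \in F\} \subseteq G_2$. Convergence on $F'$ gives $k$ with $F'g_{n_k} \subseteq T_2$, i.e., $k_f(fg_{n_k}) \in T_2$ for each $f$, so $fg_{n_k} \in K^{-1}T_2$ for each $f$. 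Thus $Fg_{n_k} \subseteq K^{-1}T_2$, and as $F$ was arbitrary, $K^{-1}T_2$ is thick, making $T_2$ piecewise syndetic.

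The main obstacle is the syndetic branch of this dichotomy: the naive attempt to read off $T_2$-membership directly on $F$ fails because $F$ may meet both color classes of $\chi$. One must first inflate $F$ to $F' \subseteq G_2$ using the $K$-syndeticity of $G_2$ in order to land in the color class where $\chi = 2$, and it is exactly this inflation that upgrades syndeticity of $G_2$ into thickness of $K^{-1}T_2$ rather than the generally false assertion of thickness of $T_2$ itself.
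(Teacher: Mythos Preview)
Your proof is correct, but it follows a genuinely different route from the paper's. The paper works directly with the structural characterization of Lemma~\ref{intersections}: writing $A = S \cap T$ with $S$ syndetic and $T$ thick, it sets $S' = C_2^{\mathrm c} \cap S$ and observes $C_1 = S' \cap T$ and $C_2 = S \cap (S')^{\mathrm c}$; the duality of Lemma~\ref{ST-duality} then forces one of these to be piecewise syndetic. This is a two-line set-algebra argument that, as the paper notes, actually shows partition regularity for the intersection family of \emph{any} pair of dual upward-closed families.

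Your approach instead reduces to splitting a thick set and runs a compactness argument in $\{1,2\}^G$ to extract a limiting two-coloring of $G$, then applies the thick/syndetic dichotomy to that coloring. This is essentially a hands-on version of the dynamical proof the paper gives later as Theorem~\ref{brown2}: your limiting $\chi$ is a point in the orbit closure of the original coloring, and your two cases mirror the analysis of minimal subsystems there. What you gain is independence from Lemma~\ref{intersections} and a more explicit, constructive feel; what you lose is brevity and the generality to arbitrary dual families that the paper's argument provides for free.
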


\begin{proof}
We will prove this for $r=2$, and the claim will follow by induction. By Lemma~\ref{intersections}, we may write $A=S\cap T$ for a syndetic set $S\subseteq G$ and thick $T\subseteq G$. Let $S'=C_2^\mathrm{c}\cap S$, so we get $C_1=S'\cap T$ (since $C_1$ and $C_2$ are disjoint) and $C_2=S\cap(S')^\mathrm{c}$. Since either $S'$ is syndetic or $(S')^\mathrm{c}$ is thick by Lemma \ref{ST-duality}, one of $C_1$ and $C_2$ must be piecewise syndetic.\footnote{Note that this proof does not use the definition of piecewise syndeticity, but only that the family of piecewise syndetic sets is the family of intersections of two dual families (``dual'' in the sense of Lemma \ref{ST-duality}). In other words, we have shown that if $\mathcal A,\mathcal B\subseteq\mathcal P(G)$ are any two dual families of subsets of $G$ that are upward closed, then the family $\{A\cap B\colon\text{$A\in\mathcal A$ and $B\in\mathcal B$}\}$ is guaranteed to be partition regular.}
\end{proof}

The following easy but important lemma demonstrates another connection between piecewise syndeticity and partition regularity. We record it as the general phenomenon behind why Straus' construction produces discordant sets. We will apply this lemma by starting with a family $\mathcal B\subseteq\mathcal P(G)$ and extending it to a $G$-invariant family $\mathcal A=\{gB\colon B\in\text{$\mathcal B$ and $g\in G$}\}$.

\begin{lemma}\label{PS-friendly}
Suppose $\mathcal A\subseteq\mathcal P(G)$ satisfies the following: 
\begin{enumerate}
\item $\mathcal A$ contains all thick subsets of $G$.
\item $\mathcal A$ is partition regular.
\item if $B\subseteq A\subseteq G$ and $B\in\mathcal A$, then $A\in\mathcal A$.
\item if $A\in\mathcal A$ and $g\in G$ then $gA\in\mathcal A$.
\end{enumerate}
Then $\mathcal A$ contains all piecewise syndetic subsets of $G$.
\end{lemma}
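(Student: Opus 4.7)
The plan is to use the hypotheses in the order (a) $\to$ (b) $\to$ (c) $\to$ (d) $\to$ (c), starting from the definition of piecewise syndeticity. Let $A \subseteq G$ be piecewise syndetic, and pick a finite $H = \{h_1, \ldots, h_r\} \subseteq G$ such that $H^{-1}A = \bigcup_{i=1}^{r} h_i^{-1}A$ is thick. By hypothesis (a), $H^{-1}A \in \mathcal{A}$.

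Next, I would refine the union $\bigcup_{i=1}^r h_i^{-1}A$ into a finite disjoint partition of $H^{-1}A$ — for instance, by setting $B_i = h_i^{-1}A \setminus \bigcup_{j<i} h_j^{-1}A$ — so that partition regularity (b) applies and yields some $i$ with $B_i \in \mathcal{A}$. Since $B_i \subseteq h_i^{-1}A$, upward closure (c) promotes this to $h_i^{-1}A \in \mathcal{A}$.

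To finish, I would apply shift-invariance (d) with $g = h_i$, concluding $h_i \cdot (h_i^{-1}A) \in \mathcal{A}$. The key observation, immediate from the definition $h_i^{-1}A = \{x \in G : h_i x \in A\}$, is that $h_i(h_i^{-1}A) \subseteq A$, so a final appeal to upward closure (c) yields $A \in \mathcal{A}$.

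There is no real obstacle here; the only step that requires any attention is the passage from the (possibly overlapping) cover $H^{-1}A = \bigcup_i h_i^{-1}A$ to a genuine finite disjoint partition before invoking (b), and the small check that $h_i(h_i^{-1}A) \subseteq A$ (which is where cancellativity is \emph{not} needed — only the definition of $h_i^{-1}A$). Everything else is a routine chain of implications using exactly one of the four hypotheses at each step.
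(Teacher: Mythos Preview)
Your proof is correct and follows essentially the same route as the paper's: start from thickness of $H^{-1}A$, use partition regularity to land in some $h^{-1}A$, then shift and apply upward closure via $h(h^{-1}A)\subseteq A$. The only difference is cosmetic---you explicitly pass to a disjoint refinement before invoking (b), whereas the paper leaves this implicit (which is harmless given (c)).
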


\begin{proof}
Let $A\subseteq G$ be piecewise syndetic and find a finite set $H\subseteq G$ such that $H^{-1}A$ is thick. Then $H^{-1}A=\bigcup_{h\in H}h^{-1}A$, so by (\textit{a}) and (\textit{b}), $h^{-1}A\in\mathcal A$ for some $h\in H$. Thus $h(h^{-1}A)\in\mathcal A$, so finally $A\in\mathcal A$ since $h(h^{-1}A)\subseteq A$.
\end{proof}

With this lemma, we give a simplified version of Straus' construction to obtain a set with positive lower density, no shift of which is IP.

\begin{theorem}\label{straus-construction}
Let $(a_n)$ be a sequence in $\N$ satisfying $\sum1/a_n<1$. Then the following set is discordant: \[A=\N\setminus\bigcup_{n\in\N}(a_n\N+n-1).\]
\end{theorem}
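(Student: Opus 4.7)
I plan to verify the two defining properties of a discordant set separately: positive upper density, and failure of piecewise syndeticity.

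The density bound is a direct calculation. For each $M$, the progression $a_n\N+n-1$ meets $[1,M]$ in at most $M/a_n$ points (and it is empty for $n>M+1$), so summing over $n$ gives
\[
|A^{\mathrm c}\cap[1,M]|\leq \sum_{n\geq 1}|(a_n\N+n-1)\cap[1,M]|\leq M\sum_{n\geq 1}\frac{1}{a_n}.
\]
Dividing by $M$ yields $\overline d(A^{\mathrm c})\leq\sum 1/a_n<1$, so $\underline d(A)\geq 1-\sum 1/a_n>0$.

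For the failure of piecewise syndeticity, I would apply Lemma~\ref{PS-friendly} with $\mathcal A$ the family of subsets of $\N$ containing some shift of an IP set. The four hypotheses are standard: every thick set is IP (noted in the introduction); upward closure and shift-invariance are immediate from the definition; and partition regularity follows from Hindman's theorem, since any partition of a set containing $n+\operatorname{FS}((b_j))$ induces a partition of $\operatorname{FS}((b_j))$, one cell of which contains a further finite-sums set. By Lemma~\ref{PS-friendly}, it then suffices to show that $A$ contains no shift of an IP set.

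Suppose for contradiction that $n+\operatorname{FS}((b_j))\subseteq A$ for some $n\in\Z$ and positive-integer sequence $(b_j)$. A preliminary absorption step reduces to the case $n\geq 0$: for any $N$, $\bigl(n+\sum_{j\leq N}b_j\bigr)+\operatorname{FS}((b_j)_{j>N})\subseteq n+\operatorname{FS}((b_j))\subseteq A$, so replacing $(b_j)$ by its tail and $n$ by the corresponding inflated shift, we may assume $n\geq 0$. Now set $m=n+1\geq 1$. The key observation is that $a_m\N+(m-1)=a_m\N+n$, so any finite sum $s\in\operatorname{FS}((b_j))$ divisible by $a_m$ satisfies $n+s\in a_m\N+n\subseteq A^{\mathrm c}$, contradicting $n+s\in A$. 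To produce such an $s$, apply a Schur-style pigeonhole to the partial sums $b_1,\,b_1+b_2,\,\ldots$ modulo $a_m$: two of them coincide, and their difference is a finite sum of $(b_j)$ divisible by $a_m$.

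The only real trick here is choosing $m=n+1$, which makes the residue condition $m-1\equiv n\pmod{a_m}$ automatic and reduces the task to extracting a single multiple of $a_m$ from the finite sums. Once that is noticed, Lemma~\ref{PS-friendly} does essentially all of the heavy lifting, and the rest is elementary.
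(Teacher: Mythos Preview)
Your argument is correct and follows essentially the same route as the paper: both apply Lemma~\ref{PS-friendly} to the family of sets containing a shift of an IP set, reduce to showing that no shift of $A$ contains an IP set, and use that every IP set meets $k\N$ for each $k$ (your pigeonhole on partial sums is exactly the standard proof of this). Your absorption step to make $n\geq 0$ is not strictly needed, since in the semigroup $\N$ the shifts in Lemma~\ref{PS-friendly} are by positive integers only, but it does no harm; the density computation likewise matches the paper's.
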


\begin{proof}
We leave it as an exercise to verify that every thick subset of $\N$ is IP; thus, by Hindman's theorem and Lemma~\ref{PS-friendly}, it suffices to show that $A$ is not a shift of an IP set (that is, a set of the form $B+t$ for some IP set $B\subseteq\N$ and $t\in\N$). This follows immediately from the construction of $A$ and the observation that if $B\subseteq\N$ is IP, then $B\cap k\N\neq\varnothing$ for all $k\in\N$.

To see that $\underline d(A)>0$, we compute that \[\frac{|A\cap\{1,\dots,k\}|}{k}\geq\frac{k-(k/a_1+k/a_2+\cdots+k/a_n)}{k}>1-\sum_{n\in\N}\frac{1}{a_n}\] for each $k<a_{n+1}$, since there are no more than $k/a$ multiples of $a$ in $\{1,\dots,k\}$.
\end{proof}

For conciseness, we excuse ourselves from proving that $d(A)$ exists, where $A$ is the set defined in Theorem \ref{straus-construction}. The reader is encouraged to look toward the sources we have mentioned or, alternatively, Theorem \ref{bfree-density} to see how this could be done.
One may also alter the set described in Theorem \ref{straus-construction} to obtain a discordant set without invoking Hindman's theorem as follows.
Pick $(a_n)$ satisfying $\sum n/a_n<1$ and set $A = \N \setminus \bigcup(a_n\N + \{0, \dots, n - 1\})$.
One shows similarly that $\underline d(A)>0$, and a totally self-contained proof that $A$ is not piecewise syndetic is not hard.

Note that Theorem \ref{straus-construction} gives us discordant sets with lower densities arbitrarily close to $1$. (A discordant set cannot have upper density equal to $1$, since any set of upper density $1$ is thick.) By generalizing the notion of an IP set, a similar construction can be done in many different groups. Discussion of this construction and a complete characterization of such groups is given in \cite[Chapter 4]{Ch} and \cite{BF}.

\subsection{Preliminaries from ergodic theory and dynamics}\label{ergodic-business}

In this section we review some basic notions and theorems of ergodic theory and dynamics. This section is utilitarian, and hence we do not venture past what we will need. For this reason we omit proofs or provide a rudimentary outline and refer the reader elsewhere.

Let $(X,\mathcal B,\mu)$ be a probability space. A measurable map $T\colon X\to X$ is \textit{measure preserving} if $\mu(T^{-1}E)=\mu(E)$ for each $E\in\mathcal B$. If $G$ is a  countably infinite  semigroup, a \textit{measure-preserving action} of $G$ on $(X,\mathcal B,\mu)$ is an action of $G$ on $X$ such that $x\mapsto gx$ is measure-preserving for each $g\in G$. In this case, we say that $\mu$ is \textit{$G$-invariant}. We refer to $(X,\mathcal B,\mu,T)$ and $(X,\mathcal B,\mu,G)$ as \textit{measure-preserving systems}.

Given a measure-preserving system, one has the following version of the von Neumann ergodic theorem.

\begin{theorem}\label{meanergodic} Suppose $G$ is a  countably infinite  amenable cancellative semigroup with a F\o lner sequence $(\Phi_n)$ and let $(X,\mathcal{B},\mu,G)$ be a measure-preserving system. Let $f\in L^2(X)$, and let $f^*$ be the orthogonal projection of $f$ onto the subspace of $L^2(X)$ consisting of $G$-invariant functions. Then \[\lim_{n\to\infty}\frac{1}{|\Phi_n|}\sum_{g\in\Phi_n}f(gx)=f^*(x)\quad\text{in $L^2(X)$.}\] 
\end{theorem}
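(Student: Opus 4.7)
The plan is to follow the classical Koopman--von Neumann strategy, carefully accounting for the fact that $G$ is only a semigroup. I would first introduce the operators $U_g\colon L^2(X)\to L^2(X)$ defined by $(U_gf)(x) = f(gx)$; since the action is measure-preserving, each $U_g$ is a linear isometry, and a direct calculation shows $U_gU_h = U_{hg}$ (so $g\mapsto U_g$ is an antirepresentation of $G$). Setting $A_n = \frac{1}{|\Phi_n|}\sum_{g\in\Phi_n}U_g$, the operators $A_n$ are contractions on $L^2(X)$, and the goal becomes showing $A_nf\to f^*$ in $L^2$ for every $f$.

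I would split the problem using the orthogonal decomposition $L^2(X) = \mathcal{I}\oplus\mathcal{I}^\perp$, where $\mathcal{I}$ is the closed subspace of $G$-invariant functions. If $f\in\mathcal{I}$, then $A_nf = f = f^*$ and we are done. For $f\in\mathcal{I}^\perp$, I would first prove that $\mathcal{I}^\perp$ equals the closure of the coboundary subspace $\mathcal{C} = \operatorname{span}\{h - U_gh : h\in L^2(X),\, g\in G\}$. The key point is that each $U_g$ being an isometry gives $U_g^*U_g = I$, which forces $U_gh = h \iff U_g^*h = h$; a standard inner product computation then shows $\mathcal{I} = \mathcal{C}^\perp$, whence $\overline{\mathcal{C}} = \mathcal{I}^\perp$.

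The heart of the proof is then showing $A_n(h - U_gh) \to 0$ in $L^2$ for every $h\in L^2(X)$ and $g\in G$. Cancellativity makes $g'\mapsto gg'$ a bijection from $\Phi_n$ onto $g\Phi_n$, so after reindexing the second sum one obtains
\[
A_n(h - U_gh) = \frac{1}{|\Phi_n|}\biggl(\sum_{k\in\Phi_n\setminus g\Phi_n}U_{k}h \;-\; \sum_{k\in g\Phi_n\setminus\Phi_n}U_{k}h\biggr),
\]
whose $L^2$-norm is bounded by $\frac{|\Phi_n\bigtriangleup g\Phi_n|}{|\Phi_n|}\|h\|_{L^2}$ because each $U_k$ is an isometry. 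This tends to $0$ by the F\o lner condition. By linearity, $A_nf\to 0$ for every $f\in\mathcal{C}$, and since $\|A_n\|\leq 1$, a standard three-$\varepsilon$ approximation extends the conclusion to all of $\overline{\mathcal{C}} = \mathcal{I}^\perp$.

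The main obstacle I anticipate is the coboundary characterization in the semigroup setting: the usual group-theoretic argument invokes $U_g^{-1}$, which is unavailable here, so one must instead work with $U_g^*$ and exploit the isometry relation $U_g^*U_g = I$. A secondary subtlety is keeping the composition order straight---because $U_gU_h = U_{hg}$, it is precisely the \emph{left} F\o lner property stated in Subsection~\ref{densities} that controls the coboundary estimate, and cancellativity is used both to make the reindexing a clean bijection and to ensure that the F\o lner framework itself is nondegenerate.
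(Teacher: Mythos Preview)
Your argument is correct and is precisely the standard Koopman--von Neumann route: decompose $L^2(X)=\mathcal I\oplus\overline{\mathcal C}$, handle invariant functions trivially, kill coboundaries via the F\o lner estimate, and pass to the closure using the uniform contraction bound $\|A_n\|\le 1$. The paper itself does not supply a self-contained proof but simply refers the reader to \cite[Theorem~1.14.1]{Wa} for $G=\Z$ and to \cite[Theorem~4.15]{Be} for the amenable-semigroup case, both of which proceed along exactly the lines you outline; your careful handling of the semigroup subtleties (using $U_g^*U_g=I$ in place of invertibility, and left cancellativity for the reindexing $g'\mapsto gg'$) is the right way to fill in the analogy the paper invokes.
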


\begin{proof}
See \cite[Theorem 1.14.1]{Wa} for a proof in the case $G=\mathbb{Z}$. The proof for more general $G$ is analogous; see \cite[Theorem 4.15]{Be}.
\end{proof}

With additional constraints on the action of $G$, more can be said.
The system $(X,\mathcal{B},\mu,G)$ is called \textit{ergodic} if there are no nontrivial $G$-invariant subsets, that is, if $E\in\mathcal{B}$ satisfies $\mu(E\bigtriangleup g^{-1}E)=0$ for all $g\in G$, then $\mu(E)\in\{0,1\}$.
In this case, the orthogonal projection of $f$ onto the subspace of $G$-invariant functions is $\int_Xf\,d\mu$, so Theorem \ref{meanergodic} states that \[\lim_{n\to\infty}\frac{1}{|\Phi_n|}\sum_{g\in\Phi_n}f(gx) = \int_X f\,d\mu \quad\text{in $L^2(X)$.}\]

If $X$ is a compact topological space on which $G$ acts by continuous maps, the pair $(X,G)$ is a (\textit{topological}) \textit{dynamical system}. If $E \subseteq X$ and $x \in X$, we define $R_E(x) = \{g \in G \colon gx \in E\}$.
When $G$ is $\Z$ or $\R$, $R_E(x)$ can be thought of as the set of visiting times of $x$ to $E$.
The system $(X, G)$ is \textit{minimal} if there is no proper nonempty closed subset $C\subseteq X$ invariant under the action of $G$ (in the sense that $GC\subseteq C$). Equivalently, the action of $G$ on $X$ is minimal if, for each $x \in G$, its orbit $\{gx \colon g \in G\}$ is dense.

When $X$ is a compact metric space with a Borel $\sigma$-algebra $\mathcal B$, we say that $(X, G)$ is \textit{uniquely ergodic} if there is exactly one $G$-invariant Borel probability measure on $X$, i.e., there exists exactly one normalized measure $\mu$ on $\mathcal B$ which makes $(X,\mathcal B,\mu,G)$ into a measure-preserving system.
Under the condition of unique ergodicity, one has the following convenient pointwise ergodic theorem.

\begin{theorem}\label{uniqueergodic}
Let $(X, G)$ be a uniquely ergodic topological dynamical system where $X$ is a compact metric space and $G$ is a  countably infinite  amenable cancellative semigroup.
Assume $\mu$ is the unique $G$-invariant Borel probability measure and let $(\Phi_n)$ be a F\o lner sequence in $G$.
Then, for each measurable $f\colon X\to\R$ whose points of discontinuity are contained in a set of measure zero and each $x\in X$, \[\lim_{n\to\infty}\frac{1}{|\Phi_n|}\sum_{g\in\Phi_n}f(gx)=\int f\,d\mu.\]
\end{theorem}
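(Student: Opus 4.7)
The plan is to first establish the conclusion for continuous $f$ via a weak-$*$ compactness argument that exploits unique ergodicity, and then extend to the almost-everywhere continuous case by sandwich approximation, imitating the classical $\Z$-action proof.

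For continuous $f \in C(X)$, associate to each $x \in X$ and $n \in \N$ the empirical measure
$$\nu_n^x = \frac{1}{|\Phi_n|}\sum_{g\in\Phi_n}\delta_{gx}$$
on $X$, so that the average in question equals $\int f\,d\nu_n^x$. I claim that $\nu_n^x \to \mu$ in the weak-$*$ topology, uniformly in $x \in X$. If not, weak-$*$ compactness of the space of Borel probability measures on the compact metric space $X$ yields $x_k \in X$ and $n_k \to \infty$ such that $\nu_{n_k}^{x_k} \to \nu$ weakly for some Borel probability measure $\nu \neq \mu$. I would then verify that $\nu$ is $G$-invariant, contradicting unique ergodicity.

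To see the invariance, fix $h \in G$ and $\phi \in C(X)$; by left cancellativity of $G$ the map $g \mapsto hg$ is a bijection from $\Phi_n$ onto $h\Phi_n$, hence
$$\int \phi(hy)\,d\nu_n^x(y) = \frac{1}{|\Phi_n|}\sum_{g\in\Phi_n}\phi(hgx) = \frac{1}{|\Phi_n|}\sum_{g\in h\Phi_n}\phi(gx).$$
Consequently,
$$\biggl|\int \phi(hy)\,d\nu_n^x(y) - \int \phi(y)\,d\nu_n^x(y)\biggr| \leq \frac{2\|\phi\|_\infty \cdot |h\Phi_n \bigtriangleup \Phi_n|}{|\Phi_n|},$$
which tends to $0$ by the Følner property. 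Passing to the weak-$*$ limit gives $\int\phi(hy)\,d\nu(y) = \int\phi(y)\,d\nu(y)$ for every $\phi \in C(X)$, so $\nu$ is $G$-invariant.

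For general $f$ as in the statement, we may assume $f$ is bounded (else truncate at $\pm N$, apply the bounded case, and push $N \to \infty$ via dominated convergence). Given $\varepsilon > 0$, the $\mu$-nullity of the discontinuity set of $f$ together with the regularity of $\mu$ on the compact metric space $X$ allows us to construct $f^-, f^+ \in C(X)$ with $f^- \leq f \leq f^+$ pointwise and $\int (f^+ - f^-)\,d\mu < \varepsilon$ (a Vitali–Carathéodory-type statement). Sandwiching
$$\frac{1}{|\Phi_n|}\sum_{g\in\Phi_n} f^-(gx) \leq \frac{1}{|\Phi_n|}\sum_{g\in\Phi_n} f(gx) \leq \frac{1}{|\Phi_n|}\sum_{g\in\Phi_n} f^+(gx),$$
applying the continuous case to $f^\pm$, and letting $\varepsilon \to 0$ yields the claim.

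The main obstacle is producing the continuous sandwich functions $f^\pm$: this essentially amounts to constructing upper and lower semicontinuous envelopes of $f$ that agree with $f$ outside a set of small $\mu$-measure and then approximating these envelopes from above and below by continuous functions, using regularity of $\mu$. A secondary subtlety is the reliance on cancellativity in the reindexing $g \mapsto hg$ above, which is exactly the mechanism that transports the Følner condition on $G$ into invariance of $\nu$ on $X$.
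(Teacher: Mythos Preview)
Your proof is correct and matches the paper's approach exactly: the paper merely cites Walters' Theorem~6.19 for the continuous case (which is your weak-$*$ compactness plus F\o lner-invariance argument) and invokes a ``standard approximation argument'' (your semicontinuous-envelope sandwich) for the extension. The truncation step for unbounded $f$ is a bit glib---dominated convergence alone does not justify the interchange of the $N\to\infty$ and $n\to\infty$ limits---but since the theorem is only ever applied in the paper to indicator functions, this is immaterial.
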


\begin{proof}
For continuous $f$, this can be proved in complete analogy with the well-known proof of the case $G=\Z$; see \cite[Theorem 6.19]{Wa}. For more general $f$, the statement follows by a standard approximation argument.
\end{proof}

A fact we will find useful is that a minimal action by isometries on a compact metric space is automatically uniquely ergodic.

\subsection{Cantor spaces}\label{cantor}

The following construction will be very useful.
Given any sequence $(X_n)_{n \in \N}$ of finite sets having the discrete topology, their topological product $X = \prod X_n$ is a compact space homeomorphic to the classical Cantor set.
We will use this construction frequently, and we will call such a space a \textit{Cantor space}. To make the notation less cluttered and easier to read, we will use boldface letters for elements of a Cantor space, e.g., $\mathbf{x} = (x_n) \in X$. (This is also consistent with our use of $\mathbf1$ for the indicator function.)

Given $n \in \N$ and $y_i \in X_i$ for $i \in \{1, \dots, n\}$, we define the set
\[V(y_1,\dots,y_n) = \{\mathbf{x} \in X\colon x_i=y_i\text{ for each }i \in \{1,\dots,n\}\}.\]
The collection of all sets having this form constitute an open base for the topology on $X$.

It will be convenient to also give Cantor spaces a metric space structure.
Giving each $X_n$ the discrete metric $\delta_n$, the space $X$ can be made into a compact metric space via the product metric $\delta$ given by
\begin{equation}\label{product-metric}
\delta(\mathbf{x},\mathbf{y})
= \sum_{n \in \N} 2^{-n}\delta_n(x_n,y_n).
\end{equation}
Assuming $\mathbf{x} \neq \mathbf{y}$, let $n \in \N$ be the smallest integer satisfying $x_n = y_n$ and $x_{n + 1} \neq y_{n + 1}$ (take $n = 0$ if $x_1 \neq y_1$). Then $\delta$ is equivalent to the more common and conceptually simpler metric $\delta'$ defined by $\delta'(\mathbf{x}, \mathbf{y}) = 1/(n + 1)$.
Observe that, given a collection of bijections $f_n \colon X_n \to X_n$ for each $n \in \N$, the induced map $f \colon X \to X$ is an isometry of $X$.

There is one more construction that we describe here.
Analogously to the natural topological and metric structures on $X$, we can endow each $X_n$ with the normalized counting measure.
The resulting (uniform) product measure $\mu$ is a probability measure on the Borel $\sigma$-algebra $\mathcal{B}$ of $X$ satisfying $\mu(V(y_1,\dots,y_n)) = 1/(|X_1| \cdots |X_n|)$.
Clearly, $\mu$ is invariant under the isometries $f$ described in the previous paragraph.

\section{Discordant sets from visiting times}\label{irrational-rotation}

As foreshadowed by the introduction, discordant sets naturally arise from dynamical systems, one of the most familiar being irrational rotation of the one-dimensional torus $\T=\R/\Z$.
Our search for discordant sets is motivated by the fact that if $\alpha\in\T$ is irrational and $U\subseteq\T$ has nonempty interior, then $\{n\in\Z\colon n\alpha\in U\}$ is syndetic.
The punchline of this direction of reasoning is that if $E\subseteq\T$ is nowhere dense and has positive measure, then for almost every $x\in\T$, the set of visiting times $R_E(x)=\{n\in\Z\colon x+n\alpha\in E\}$ is discordant.
Here $R_E(x)$ is non-piecewise syndetic for any $x\in\T$ by Lemma \ref{not-nowhere-dense} below, whereas the positive upper density of $R_E(x)$ is guaranteed only for almost every $x$ by an application of the von Neumann ergodic theorem.\footnote
{In fact, one can easily see that $R_E(x)$ has positive \textit{natural} density for almost every $x$ by using the pointwise ergodic theorem.
}
Theorem~\ref{dynamical-destruction} is a generalization of this construction which yields many discordant sets in many settings, and is the core of our analogy between discordant sets and nowhere dense sets of positive measure.

Let us note that the restriction ``almost every $x$'' is a significant and unavoidable constraint. One can construct a closed nowhere dense set $E\subseteq\T$ of positive measure for which $\{n\in\Z\colon x+n\alpha\in E\}$ is empty for infinitely many $x\in\T$.

The importance of the irrationality of $\alpha$ is that $(\T, x \mapsto x + \alpha)$ is minimal if and only if $\alpha$ is irrational. In our more general setting we replace this system with a minimal action of a semigroup $G$ on a compact space $X$.

In this setting, we wish to connect properties of $R_E(x) = \{g \in G \colon gx \in E\}$ as a subset of $G$ with the properties of $E$ as a subset of $X$. Lemma \ref{not-nowhere-dense} connects the piecewise syndeticity of $R_E(x)$ with the topological properties of $E$ and Theorem \ref{dynamical-destruction} connects the density of $R_E(x)$ with the measure of $E$, allowing us to construct discordant sets.

In analogy with our notation for subsets of $G$, we write $A^{-1}E = \bigcup_{a \in A} a^{-1}E$ for $A \subseteq G$ and $E \subseteq X$.

\begin{lemma}\label{not-nowhere-dense}
Let $(X,G)$ be a minimal topological dynamical system, where $G$ is a  countably infinite  cancellative semigroup. Then, for any nowhere dense $E\subseteq X$ and $x\in X$, the set $R_E(x)$ is not piecewise syndetic.
\end{lemma}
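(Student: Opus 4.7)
The plan is to argue by contradiction, extracting a point whose entire $G$-orbit lies in a closed ``thickening'' of $E$ and then invoking minimality. First, since $R_E(x) \subseteq R_{\overline{E}}(x)$, $\overline{E}$ remains nowhere dense, and piecewise syndeticity passes to supersets, I may assume $E$ is closed.

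Suppose for contradiction that $R_E(x)$ is piecewise syndetic. Pick a finite $H \subseteq G$ such that $H^{-1}R_E(x) = R_F(x)$ is thick, where $F := H^{-1}E = \bigcup_{h \in H} h^{-1}E$ is closed in $X$ (as a finite union of preimages of the closed set $E$ under continuous maps). Fix an increasing exhaustion $K_1 \subseteq K_2 \subseteq \cdots$ of $G$ by finite sets with $\bigcup_n K_n = G$, and use thickness of $R_F(x)$ to pick $g_n \in G$ with $K_ng_nx \subseteq F$. Passing to a subsequential limit using compactness of $X$, $g_nx \to y$ for some $y \in X$. For any fixed $k \in G$, $k \in K_n$ eventually, so $kg_nx \in F$ for large $n$; by continuity of $z \mapsto kz$ and closedness of $F$, this gives $ky \in F$. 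Thus $Gy \subseteq F$, and minimality forces $X = \overline{Gy} \subseteq F$, i.e., $H^{-1}E = X$.

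It remains to contradict $H^{-1}E = X$ for closed nowhere dense $E$. Since $X$ is compact Hausdorff, hence a Baire space, and $X = \bigcup_{h \in H} h^{-1}E$ expresses $X$ as a finite union of closed sets, some $h_0^{-1}E$ has nonempty interior $U$, whence $h_0 U \subseteq E$; one wants to conclude that $E$ itself contains a nonempty open set, contradicting nowhere density. This final step is the main obstacle, since a continuous map need not send open sets to sets with nonempty interior. The argument closes cleanly when each $h \in G$ acts as a homeomorphism of $X$ (in particular when $G$ is a group): each $h^{-1}E$ is then a homeomorphic copy of $E$, hence closed nowhere dense, so a finite union of closed nowhere dense sets cannot be all of the Baire space $X$. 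For general cancellative semigroup actions by continuous maps, one must leverage minimality (and possibly cancellativity) more carefully to show that $h^{-1}E$ remains nowhere dense for each $h \in G$; this is where I expect the proof to need the most care.
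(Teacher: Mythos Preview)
Your route differs from the paper's. You argue by contradiction, extracting via compactness a point $y$ with $Gy \subseteq H^{-1}E$ and then using minimality to force $H^{-1}E = X$. The paper instead argues directly through the syndetic/thick duality of Lemma~\ref{ST-duality}: after recording that $R_V(x)$ is syndetic for every nonempty open $V$, it writes $F^{-1}R_E(x) = R_{F^{-1}E}(x)$, picks a nonempty open $V \subseteq X \setminus F^{-1}E$, and observes that the syndetic set $R_V(x)$ is disjoint from $F^{-1}R_E(x)$, so the latter cannot be thick. Since $F$ was an arbitrary finite set, $R_E(x)$ is not piecewise syndetic. This is shorter and avoids your limit-point construction entirely. (A minor technical remark: your ``subsequential limit'' presumes sequential compactness, which the lemma does not assume; the decreasing closed sets $\bigcap_{k \in K_n} k^{-1}F$ are nonempty since they contain $g_n x$, and their intersection is nonempty by the finite intersection property, giving $y$ directly.)

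That said, both arguments rest on exactly the claim you single out: that $F^{-1}E$ is nowhere dense (for the paper, to produce the open $V$; for you, to contradict $H^{-1}E = X$) whenever $E$ is nowhere dense and $F \subseteq G$ is finite. The paper simply asserts ``$F^{-1}E$ is nowhere dense'' with no further justification. Your diagnosis is accurate: when $G$ is a group each $h$ acts as a homeomorphism, so $h^{-1}E$ is nowhere dense and a finite union of nowhere dense sets is nowhere dense; for a cancellative semigroup acting only by continuous maps this is indeed the substantive step. So the gap you flag in your own argument is precisely the point the paper leaves unargued---you have not missed any idea present in the paper's proof.
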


\begin{proof}
It is a classical fact that $(X,G)$ is minimal if and only if $R_V(x)$ is syndetic for each nonempty open $V\subseteq X$ and $x\in X$; see, e.g., \cite[Theorem 1.15]{Fur}. We reproduce the argument here for the convenience of the reader. The ``if'' direction is evident, so suppose $V\subseteq X$ is open and nonempty and $x\in X$.
Let $Y=G^{-1}V$; we claim that $Y=X$. Indeed, if $y\in X\setminus Y$, then $gy\notin V$ for all $g$, so $y$ cannot have dense orbit, contradicting the minimality of $(X,G)$. By compactness, there is a finite set $H\subseteq G$ such that $X=H^{-1}V$.
Now fix $g\in G$ and find $h\in H$ such that $gx\in h^{-1}V$. Then $hg\in R_V(x)$, so $G=H^{-1}R_V(x)$, proving that $R_V(x)$ is syndetic.

Now let $E\subseteq X$ be nowhere dense and let $F\subseteq G$ be finite. Then $F^{-1}R_E(x)= R_{F^{-1}E}(x)$, and $F^{-1}E$ is nowhere dense. Let $V$ be an open set lying in $X\setminus F^{-1}E$. Then $F^{-1}R_E(x)$ is disjoint from the syndetic set $R_V(x)$, and so cannot be thick. Since $F$ was arbitrary, $R_E(x)$ cannot be piecewise syndetic.
\end{proof}

Suppose $(X, G)$ is a topological dynamical system.
If $G$ is amenable, there always exists a $G$-invariant measure $\mu$ on $X$ by a version of the Bogolyubov--Krylov theorem (see \cite[Theorem 6.9]{Wa} for a proof of the case $G=\Z$; the theorem for general $G$ is proved analogously).
Moreover, $\mu$ can taken to be ergodic by invoking the so-called ergodic decomposition.

\begin{theorem}\label{dynamical-destruction}
Let $(X,G)$ be a minimal topological dynamical system, where $G$ is a  countably infinite  amenable cancellative semigroup with a F\o lner sequence $\Phi = (\Phi_n)$.
Let $\mu$ be a $G$-invariant probability measure on the Borel $\sigma$-algebra $\mathcal{B}$ of $X$. If $E\subseteq X$ is nowhere dense with positive measure, then there exists $z\in X$ such that $R_E(z)$ is discordant.
If, in addition, $(X, \mathcal{B}, \mu, G)$ is ergodic, $\overline{d}_\Phi(R_E(z)) \geq \mu(E)$ for $\mu$-almost every $z \in X$.
\end{theorem}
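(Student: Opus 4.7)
The non-piecewise syndeticity of $R_E(z)$ for every $z \in X$ is immediate from Lemma \ref{not-nowhere-dense}, since $E$ is nowhere dense. Thus the substantive content of the theorem is the density estimate: in the ergodic case, $\overline{d}_\Phi(R_E(z)) \geq \mu(E)$ for $\mu$-almost every $z$, and in general, $\overline{d}_\Phi(R_E(z)) > 0$ for at least one $z$.

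The bridge from $E$ to $R_E(z)$ is the identity
\[
\frac{|R_E(z) \cap \Phi_n|}{|\Phi_n|} = \frac{1}{|\Phi_n|} \sum_{g \in \Phi_n} \mathbf{1}_E(gz),
\]
which lets us apply Theorem \ref{meanergodic} with $f = \mathbf{1}_E$. Let $f^*$ denote the orthogonal projection of $\mathbf{1}_E$ onto the subspace of $G$-invariant functions of $L^2(X)$, so that these F\o lner averages converge to $f^*$ in $L^2(X)$. Since $\int f^* \, d\mu = \int \mathbf{1}_E \, d\mu = \mu(E) > 0$, the set $\{f^* > 0\}$ has positive $\mu$-measure; in the ergodic case, $f^* = \mu(E)$ $\mu$-almost everywhere.

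Theorem \ref{meanergodic} yields only $L^2$ convergence, so I invoke the standard fact that $L^2$ convergence implies convergence in measure and hence admits a $\mu$-almost everywhere convergent subsequence $(\Phi_{n_k})$. For $\mu$-almost every $z$,
\[
\overline{d}_\Phi(R_E(z)) \geq \limsup_{k \to \infty} \frac{|R_E(z) \cap \Phi_{n_k}|}{|\Phi_{n_k}|} = f^*(z).
\]
In the ergodic case the right-hand side equals $\mu(E)$ almost everywhere, giving the second assertion. In general, selecting any $z$ in the intersection of $\{f^* > 0\}$ with the set of $\mu$-almost everywhere convergence (which has positive measure, hence is nonempty) yields $\overline{d}_\Phi(R_E(z)) > 0$, so such $z$ is discordant. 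The main, rather mild, obstacle is precisely this promotion of $L^2$ convergence to a pointwise bound; the subsequence argument handles it cleanly, so that no pointwise ergodic theorem for general amenable actions is needed.
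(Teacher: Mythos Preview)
Your proof is correct and follows essentially the same approach as the paper: apply the mean ergodic theorem to $\mathbf{1}_E$, pass to an almost-everywhere convergent subsequence, and use that $\int f^*\,d\mu=\mu(E)$ to locate $z$ with $f^*(z)>0$, combining with Lemma~\ref{not-nowhere-dense} for non-piecewise syndeticity. The only cosmetic difference is that the paper verifies $\int f^*\,d\mu=\mu(E)$ by integrating the averages and invoking dominated convergence, whereas you use the (equivalent) fact that orthogonal projection onto the invariant subspace preserves the integral.
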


\begin{proof}
Let $f = \mathbf{1}_E$. By Theorem \ref{meanergodic}, \[\lim_{n\to\infty}\frac{1}{|\Phi_n|}\sum_{g\in \Phi_n}f(gx)=f^*(x),\] where $f^*$ is the orthogonal projection of $f$ onto the space of $G$-invariant functions in $L^2(X,\mu)$, and the convergence is in $L^2$. Extract a subsequence $\Phi'=(\Phi_{n_k})_{k\in\mathbb{N}}$ such that  $\lim_{k\to\infty}|\Phi_{n_k}|^{-1}\sum_{g\in \Phi_{n_k}}f(gx)=f^*(x)$ almost everywhere.
For each $k$,
\[\int \frac{1}{|\Phi_{n_k}|}\sum_{g\in\Phi_{n_k}}f(gx)\,d\mu=\frac{1}{|\Phi_{n_k}|}\sum_{g\in\Phi_{n_k}}\int f(gx)\,d\mu=\frac{1}{|\Phi_{n_k}|}\sum_{g\in\Phi_{n_k}}\mu(E)=\mu(E).\]
Hence, by the dominated convergence theorem, $\int f^*\,d\mu=\mu(E)$. Thus, there is a set of positive measure on which $f^*>0$. Choose $z$ such that $f^*(z)>0$ and \[d_{\Phi'}(R_E(z))=\lim_{k\to\infty}\frac{1}{|\Phi_{n_k}|}\sum_{g\in \Phi_{n_k}}f(gz)=f^*(z).\] Then $\overline{d}_{\Phi}(R_E(z))\geq d_{\Phi'}(R_E(z))=f^*(z)>0$, and, since $(X,G)$ is minimal, $R_E(z)$ is not piecewise syndetic by Lemma \ref{not-nowhere-dense}.

In the presence of ergodicity, $f^*$ is the constant function $\int f\,d\mu = \mu(E)$, so we may take $z$ to be any element for which $\lim_{k\to\infty}|\Phi_{n_k}|^{-1}\sum_{g\in \Phi_{n_k}}f(gz)=f^*(z)$.
As we noted, such $z$ form a subset of $X$ of full measure.
\end{proof}

It is worth noting that, in the setting of Theorem \ref{dynamical-destruction}, $X$ possesses a large supply of nowhere dense sets of positive measure if it is an infinite metric space.

\begin{theorem}
\label{thm:lots-of-fat-cantor}
Let $(X,G)$ and $\mu$ be as in Theorem \ref{dynamical-destruction}, and assume that $X$ is a metric space which is not finite.\footnote
{
It is evident from the proof of Theorem \ref{thm:lots-of-fat-cantor} that the condition ``metric space'' can be weakened slightly, at the cost of more cumbersome hypotheses.
}
Then for any $c\in(0,1)$, there exists a nowhere dense set $E \subseteq X$ with $\mu(E)=c$.
\end{theorem}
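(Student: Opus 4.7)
The plan is to produce $E$ by first constructing a closed, nowhere dense set $F \subseteq X$ with $\mu(F) \geq c$ (a ``fat Cantor set''), then extracting a Borel subset of $F$ of measure exactly $c$. Two structural facts about $\mu$ drive the argument: that $\operatorname{supp}(\mu) = X$, and that $\mu$ has no atoms.

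First I would establish these two facts. For the support: given $x \in \operatorname{supp}(\mu)$ and $g \in G$, if $gx$ had an open neighborhood $V$ with $\mu(V) = 0$, then $g^{-1}V$ would be an open neighborhood of $x$ of measure zero by continuity of $g$ and invariance of $\mu$, so $\operatorname{supp}(\mu)$ is forward $G$-invariant. Being a nonempty closed forward $G$-invariant subset of $X$, minimality forces $\operatorname{supp}(\mu) = X$. For atomlessness, suppose for contradiction that $\mu(\{x_0\}) = \alpha > 0$. The inclusion $\{x_0\} \subseteq g^{-1}\{gx_0\}$ together with invariance gives $\mu(\{gx_0\}) \geq \alpha$ for every $g \in G$, so countable additivity over the countable orbit $Gx_0$ forces $Gx_0$ to be finite. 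But then $Gx_0$ is a finite (hence closed) forward $G$-invariant subset of $X$, and minimality forces $X = Gx_0$, contradicting the assumption that $X$ is infinite.

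Next I would construct $F$. Since $X$ is compact metric, it has a countable base $\{U_n\}_{n \in \N}$ of nonempty open sets. For each $n$, pick $x_n \in U_n$; atomlessness combined with continuity from above of $\mu$ gives $\mu(B(x_n, r)) \to 0$ as $r \to 0$, while full support ensures $\mu(B(x_n, r)) > 0$ for every $r > 0$. Choose $r_n > 0$ small enough that the ball $B_n = B(x_n, r_n)$ satisfies $B_n \subseteq U_n$ and $\mu(B_n) < (1-c) \cdot 2^{-n}$, and set $F = X \setminus \bigcup_n B_n$. Then $F$ is closed with $\mu(F) \geq 1 - \sum_n (1-c)2^{-n} = c$, and $\bigcup_n B_n$ meets every basic open set, so it is dense and thus $F$ has empty interior, i.e., $F$ is nowhere dense.

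Finally, the restriction of $\mu$ to $F$ is atomless, so the standard intermediate value property for atomless finite measures supplies a Borel $E \subseteq F$ with $\mu(E) = c$. Since $\overline{E} \subseteq F$ has empty interior, $E$ itself is nowhere dense, completing the proof. The main subtlety is the atomlessness step: it is the one place the hypothesis that $X$ is infinite is genuinely used, and it is where minimality interacts nontrivially with invariance. Everything after that reduces to a routine fat-Cantor-style construction, adapted to a general compact metric space rather than executed on an interval.
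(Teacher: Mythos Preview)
Your proof is correct and follows essentially the same route as the paper: establish that $\mu$ is atomless via minimality and invariance, remove small open neighborhoods of a countable dense set (you use centers of a countable base, the paper uses a countable dense subset directly) to obtain a closed nowhere dense set of measure at least $c$, then invoke Sierpi\'nski's intermediate value theorem for atomless measures to carve out a subset of measure exactly $c$. The one superfluous step is your proof that $\operatorname{supp}(\mu)=X$: you never actually use the positivity $\mu(B(x_n,r))>0$ in the construction, only the smallness $\mu(B_n)<(1-c)2^{-n}$, so the full-support claim can be dropped.
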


\begin{proof}
We first claim that $\mu$ is a non-atomic measure.
Let $x\in X$.
Then $\mu(\{gx\})=\mu(g^{-1}\{gx\})\geq\mu(\{x\})$ for any $g\in G$.
Since $(X,G)$ is minimal, $\{gx\colon g\in G\}$ is dense in $X$, and is in particular infinite.
It follows that $\mu(\{x\})=0$ since $\mu$ is a probability measure.

Now let $\{y_n\}_{n \in \N} \subseteq X$ be a countable dense subset of $X$.
Since $\mu$ is non-atomic, $\mu(\{y_n\}) = 0$ for all $n \in \N$.
Hence by the continuity of $\mu$ from above, we can find an open neighborhood $U_n \subseteq X$ of $y_n$ with $\mu(U_n) < (1-c)/2^n$.
Set $E_0 = X \setminus \bigcup U_n$, so that $E_0$ is a nowhere dense subset of $X$ with $\mu(E_0) > c$.
The claim now follows from a theorem of Sierpi\'nski \cite{Si} which states that in any non-atomic measure space $(Y,\mathcal{N},\nu)$ with $\nu(Y)<\infty$, the measure $\nu$ takes every value in the interval $[0,\nu(Y)]$.
\end{proof}

\section{Discordant sets and generalizations of squarefree numbers}\label{number-theory}

It is known that the set of squarefree numbers in $\N$ is discordant, and in this section we describe a wide-ranging generalization of this fact and examine in detail some special cases.

\subsection{A characterization and a construction}

We will prove non-piecewise syndeticity in two ways; the first uses a characterization (Lemma \ref{characterization}) of non-piecewise syndetic sets, and the second uses Lemma \ref{not-nowhere-dense}. Ramsey theory aficionados will notice that Lemma \ref{characterization} can be rephrased as a characterization of sets satisfying the dual property, in the sense of Lemma \ref{ST-duality}, to piecewise syndeticity, often denoted by PS$^*$.

\begin{lemma}\label{characterization}
Let $G$ be a  countably infinite  semigroup and let $A \subseteq G$.
Then $A$ is not piecewise syndetic if and only if, for any finite $H \subseteq G$, there exists a syndetic $S \subseteq G$ such that $HS \cap A = \varnothing$.
\end{lemma}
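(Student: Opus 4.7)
The plan is to deduce both directions from the duality between thickness and syndeticity (Lemma \ref{ST-duality}), once I reformulate the condition $HS \cap A = \varnothing$ in terms of $H^{-1}A$. The central algebraic observation is that for any $S \subseteq G$ and any finite $H \subseteq G$,
\[
HS \cap A = \varnothing \iff (\forall h \in H,\, s \in S)\; hs \notin A \iff S \cap H^{-1}A = \varnothing \iff S \subseteq (H^{-1}A)^\mathrm{c},
\]
where I use that $H^{-1}A = \bigcup_{h \in H} h^{-1}A$.

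For the forward direction, I would suppose that $A$ is not piecewise syndetic and let $H \subseteq G$ be an arbitrary finite set. By the definition of piecewise syndeticity, $H^{-1}A$ is not thick, so by Lemma \ref{ST-duality} the set $S := (H^{-1}A)^\mathrm{c}$ is syndetic. The equivalence above immediately gives $HS \cap A = \varnothing$, so $S$ is the desired witness.

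For the backward direction, I would argue by contrapositive: suppose $A$ is piecewise syndetic, and fix a finite $H \subseteq G$ with $H^{-1}A$ thick. For any $S$ with $HS \cap A = \varnothing$, the equivalence yields $S \subseteq (H^{-1}A)^\mathrm{c}$. Because $H^{-1}A$ is thick, Lemma \ref{ST-duality} says $(H^{-1}A)^\mathrm{c}$ is not syndetic; since syndeticity is preserved under supersets (if $G = K^{-1}S$ then $G = K^{-1}T$ for any $T \supseteq S$), this forces $S$ to be not syndetic. Hence no syndetic $S$ can satisfy $HS \cap A = \varnothing$, which is the contrapositive of the desired implication.

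The argument has no real obstacle: it is a direct application of Lemma \ref{ST-duality}, with the only mild care needed being the algebraic reformulation $HS \cap A = \varnothing \iff S \subseteq (H^{-1}A)^\mathrm{c}$, which unfolds the definition of $H^{-1}A$. Morally, the lemma is simply restating that ``$A$ is not piecewise syndetic'' means ``for every finite $H$, the set $(H^{-1}A)^\mathrm{c}$ is a syndetic set avoiding $HA$-translates that hit $A$,'' which is the dual formulation hinted at by the reference to PS$^*$.
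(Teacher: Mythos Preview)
Your proof is correct and follows essentially the same approach as the paper: both arguments rest on the identity $\{g \in G : Hg \cap A = \varnothing\} = (H^{-1}A)^{\mathrm c}$ and then invoke Lemma~\ref{ST-duality}. The paper packages both directions into a single biconditional by observing that this set $B_H = (H^{-1}A)^{\mathrm c}$ is the maximal $S$ with $HS \cap A = \varnothing$, so some syndetic $S$ exists iff $B_H$ itself is syndetic iff $H^{-1}A$ is not thick; your version separates the directions but is otherwise identical.
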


\begin{proof}
Let $H \subseteq G$ be finite and consider the set $B_H = \{g \in G \colon Hg \cap A = \varnothing\}$.
Observe that $B_H^\mathrm{c}=H^{-1}A$, since $g \in B_H^\mathrm{c}$ if and only if $hg \in A$ for some $h \in H$.
Thus, by Lemma \ref{ST-duality}, $B_H$ is syndetic if and only if $H^{-1}A$ is not thick.
In particular, $B_H$ is syndetic for all finite $H \subseteq G$ if and only if $A$ is not piecewise syndetic.
\end{proof}

Before constructing non-piecewise syndetic sets using Lemma \ref{characterization}, we first recall a general group-theoretic form of the Chinese remainder theorem.\footnote{A generalization of this form of the Chinese remainder theorem is described in the StackExchange discussion \href{https://math.stackexchange.com/questions/2954401/validation-for-a-conjecture-about-chinese-remainder-theorem-for-groups}{\texttt{SE2954401}}, although we will not need this generality.}

\begin{lemma}[Chinese remainder theorem]\label{chinese}
Let $G$ be a group with normal subgroups $N_1,\dots,N_n$, where $n\geq2$ and $|G/N_1|,\dots,|G/N_n|$ are pairwise coprime.
Define $N=\bigcap_{i=1}^nN_i$ and $\varphi\colon G/N\to\prod_{i=1}^nG/N_i$ by $\varphi(gN)=(gN_1,\dots,gN_n)$.
Then $\phi$ is an isomorphism.
\end{lemma}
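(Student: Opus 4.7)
The plan is to verify that $\varphi$ is a well-defined homomorphism, then check injectivity directly, and finally deduce surjectivity by a counting argument that exploits the coprimality of the indices.

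First I would observe that, since $N \subseteq N_i$ for each $i$, the map $\varphi$ is well-defined and is a homomorphism (one can verify the latter coordinatewise). For injectivity, note that $\varphi(gN)$ is the identity precisely when $g \in N_i$ for every $i$, i.e., when $g \in N$; so $\ker \varphi$ is trivial.

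The main content is surjectivity. Implicit in the hypothesis is that each $|G/N_i|$ is a finite integer, and hence so is $|G/N|$ (an intersection of finitely many finite-index subgroups has finite index). Since $\varphi$ is injective, we get $|G/N| \leq \prod_i |G/N_i|$, and I would prove surjectivity by showing equality holds here. I would argue by induction on $n$, the base case $n = 2$ being the key step. For $n = 2$: the containments $N_1 \cap N_2 \subseteq N_1$ and $N_1 \cap N_2 \subseteq N_2$ imply via Lagrange's theorem that $|G/N_1|$ and $|G/N_2|$ both divide $|G/(N_1 \cap N_2)|$. Since these are coprime, their product divides $|G/(N_1 \cap N_2)|$, forcing $|G/(N_1 \cap N_2)| = |G/N_1| \cdot |G/N_2|$ (by the injectivity bound just noted), which makes $\varphi$ a bijection between equinumerous finite sets.

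For the inductive step, I would set $M = N_1 \cap \cdots \cap N_{n-1}$ and apply the $n = 2$ case to the pair $(M, N_n)$. The inductive hypothesis gives $|G/M| = \prod_{i=1}^{n-1} |G/N_i|$, which is coprime to $|G/N_n|$ by the pairwise coprimality hypothesis, so the $n=2$ case applies and yields $|G/N| = |G/M| \cdot |G/N_n| = \prod_{i=1}^n |G/N_i|$.

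The only genuine obstacle is the counting argument for $n = 2$; everything else is routine diagram-chasing. Once the orders match, the already-established injectivity of $\varphi$ upgrades to bijectivity for free, completing the proof.
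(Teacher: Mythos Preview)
Your proof is correct and takes essentially the same approach as the paper's: injectivity is immediate, and surjectivity follows from a counting argument using that each $|G/N_i|$ divides $|G/N|$ together with pairwise coprimality. The paper dispenses with your induction by observing this divisibility for all $i$ at once (via $|G/N| = |G/N_i|\cdot|N_i/N|$), so that $\prod_i |G/N_i|$ divides $|G/N|$ directly.
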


\begin{proof}
It is immediate from the first isomorphism theorem that $\phi$ is a well-defined injective homomorphism.
So, it suffices to show that $|G/N|\geq\prod_{i=1}^n |G/N_i|$.
Indeed, for each $i$, $|G/N_i|$ divides $|G/N|$ since $|G/N|=|G/N_i|\cdot|N_i/N|$.
Thus the inequality holds since $|G/N_1|,\dots,|G/N_n|$ are pairwise coprime.
\end{proof}

We now combine the Chinese remainder theorem with Lemmas \ref{characterization} and \ref{not-nowhere-dense} to obtain an abundance of non-piecewise syndetic sets. Although Lemma \ref{characterization} shows that we can remove ``thicker and thicker'' syndetic sets to produce non-piecewise syndetic sets, Theorem \ref{bfreegeneral} shows that in special situations (when the syndetic subsets are suitably ``arithmetically/algebraically independent'', as the Chinese remainder theorem will guarantee) this is not necessary.

\begin{theorem}\label{bfreegeneral}
Let $G$ be a  countably infinite  group and $(N_n)$ be a sequence of normal finite-index subgroups of $G$ such that for any $n\geq2$, the subgroups $N_1,\dots,N_n$ satisfy the hypotheses of Lemma \ref{chinese}. Then, for any sequence $(g_n)$ in $G$, the set $A=G\setminus\bigcup g_nN_n$ is not piecewise syndetic.
\end{theorem}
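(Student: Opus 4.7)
The plan is to apply Lemma \ref{characterization}: it suffices to show that for every finite $H \subseteq G$ there exists a syndetic $S \subseteq G$ with $HS \cap A = \varnothing$, equivalently $HS \subseteq \bigcup_n g_n N_n$. The intuition is that even for a very large $H$, because the moduli $N_n$ are ``independent'' in the sense of the Chinese remainder theorem, we have enough freedom to simultaneously force each translate $h_i S$ into some prescribed forbidden coset.

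Given a finite $H = \{h_1, \dots, h_k\}$, I would pair each $h_i$ with a distinct index $n_i \in \N$ (say $n_i = i$) and arrange $h_i s \in g_{n_i} N_{n_i}$ for every $s \in S$. A direct calculation shows $h_i s \in g_{n_i} N_{n_i}$ if and only if $s \in h_i^{-1} g_{n_i} N_{n_i}$, so the natural candidate is
\[
S = \bigcap_{i=1}^k h_i^{-1} g_{n_i} N_{n_i}.
\]
By hypothesis, $N_{n_1}, \dots, N_{n_k}$ are normal subgroups of $G$ with pairwise coprime finite indices, so Lemma \ref{chinese} tells us that the natural map $G/N \to \prod_{i=1}^k G/N_{n_i}$, where $N = \bigcap_i N_{n_i}$, is an isomorphism. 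Its surjectivity produces a preimage of the prescribed tuple of cosets $(h_i^{-1} g_{n_i} N_{n_i})_{i=1}^k$, so $S$ is a single nonempty coset of the finite-index subgroup $N$, and is therefore syndetic. By construction $h_i s \in g_{n_i} N_{n_i} \subseteq \bigcup_n g_n N_n$ for every $s \in S$ and every $i$, so $HS \cap A = \varnothing$, as required.

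I do not anticipate a serious obstacle; the argument is essentially a direct packaging of Lemma \ref{chinese}. The only minor subtlety is to ensure the indices $n_1, \dots, n_k$ are chosen distinct, so that the lemma genuinely applies to $N_{n_1}, \dots, N_{n_k}$; nothing else about the sequence $(g_n)$ beyond the existence of these distinct indices is used.
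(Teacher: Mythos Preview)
Your proposal is correct and essentially identical to the paper's first proof: both invoke Lemma \ref{characterization}, pair the elements of the given finite set with the first $k$ subgroups $N_1,\dots,N_k$, and use the surjectivity in Lemma \ref{chinese} to produce a single coset $xN$ of the finite-index intersection $N=\bigcap_{i=1}^k N_i$ as the required syndetic set $S$ with $HS\cap A=\varnothing$. (The paper also gives a second, dynamical proof via Lemma \ref{not-nowhere-dense}, but your argument matches Proof~1 almost line for line.)
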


In principle, the set $A$ may be empty. Below we will see many examples where this is not the case. Notice that we can equivalently state the conclusion of Theorem \ref{bfreegeneral} as ``every piecewise syndetic subset of $G$ contains an element (in fact, infinitely many elements) of $\bigcup g_nN_n$''. We will give two proofs of this fact.

\begin{proof}[Proof 1]
Fix $F=\{f_1,\dots,f_n\} \subseteq G$.
Let $N=\bigcap_{i=1}^nN_i$, and let $\varphi$ be the isomorphism guaranteed by the Chinese remainder theorem. Then, setting $x=\varphi^{-1}(f_1^{-1}g_1N_1,\dots,f_n^{-1}g_nN_n)$, we have $f_ix\in g_iN_i$ for each $i\in\{1,\dots,n\}$, so $F(xN)\cap A=\varnothing$. Since the subgroups $N_i$ have finite index, $N$ is syndetic, so Lemma \ref{characterization} completes the proof.
\end{proof}

Our second proof of Theorem \ref{bfreegeneral}, though longer, uses Lemma \ref{not-nowhere-dense} and strengthens our analogy between discordant sets and nowhere dense sets of positive measure.
In the remainder of this subsection we will use the definitions made in Subsections \ref{ergodic-business} and \ref{cantor}.

\begin{proof}[Proof 2]
Consider the Cantor space $X=\prod G/N_n$.
Given $h_1,\dots,h_n\in G$, define
\begin{equation}\label{xbase}
V(h_1,\dots,h_n)=\{\mathbf{x}\in X\colon\text{$x_iN_i=h_iN_i$ for each $i\in\{1,\dots,n\}$}\}.
\end{equation}
Sets having the form given in \eqref{xbase} constitute an open base for the topology on $X$.

Let $G$ act by componentwise multiplication on $X$, i.e., $g\mathbf{x}=(gx_nN_n)_{n\in\N}$. The Chinese remainder theorem guarantees that this action is minimal. To see this, let $\mathbf{x}\in X$ and $h_1,\dots,h_n\in G$ be given. By the Chinese remainder theorem, find $h\in G$ so that $(hN_1,\dots,hN_n)=(h_1x_1^{-1}N_1,\dots,h_nx_n^{-1}N_n)$. Then $h\mathbf{x}\in V(h_1,\dots,h_n)$ since $hx_iN_i=hN_ix_i=h_ix_i^{-1}N_ix_i=h_iN_i$. This proves that $\mathbf{x}$ has dense orbit.

To apply Lemma \ref{not-nowhere-dense}, we simply observe that $A=R_E((g_n^{-1}N_n)_{n\in\N})$, where
\begin{equation}
\label{e}
E=\{\mathbf{x}\in X\colon x_nN_n\neq N_n\text{ for each }n\in\N\}.
\end{equation}
The set $E$ is nowhere dense, since it is closed and is not a superset of any set given in \eqref{xbase}.
\end{proof}

As one might expect from this second proof, we can obtain a density result for Theorem \ref{bfreegeneral} by applying Theorem \ref{dynamical-destruction}.

\begin{theorem}\label{bfreegeneraldensity}
Let $G$, $X$, and $(N_n)$ be as in Proof 2 of Theorem \ref{bfreegeneral}, and set $c_n=|G/N_n|$ for each $n\in\N$.
Suppose $G$ is amenable, possessing a F\o lner sequence $\Phi$.
Let $\mu$ be the probability measure on the Borel $\sigma$-algebra $\mathcal B$ of $X$ satisfying $\mu(V(h_1,\dots,h_n))=1/(c_1\cdots c_n)$.
Finally, given $\mathbf{x} \in X$, set $A_\mathbf{x} = G \setminus \bigcup x_nN_n$.
Then, for $\mu$-almost every $\mathbf{x} \in X$, \begin{equation}\label{actuallybfgd}
\overline{d}_\Phi(A_\mathbf{x})=\prod_{n\in\N}{\left(1-\frac{1}{c_n}\right)}.
\end{equation}
\end{theorem}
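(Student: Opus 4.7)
The plan is to leverage the identification from Proof 2 of Theorem~\ref{bfreegeneral}: setting $E=\{\mathbf{z}\in X:z_n\neq N_n\text{ for all }n\}$ and letting $\Psi\colon X\to X$ denote the coordinate-wise inversion bijection $(x_n)\mapsto(x_n^{-1})$, a direct check gives $A_\mathbf{x}=R_E(\Psi(\mathbf{x}))$ for every $\mathbf{x}\in X$. Since inversion is a bijection of each finite quotient $G/N_n$, the map $\Psi$ preserves the uniform product measure $\mu$, so it suffices to prove $\overline{d}_\Phi(R_E(\mathbf{y}))=\prod_{n\in\N}(1-1/c_n)$ for $\mu$-almost every $\mathbf{y}$. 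This target value equals $\mu(E)$: by independence of the coordinate factors, the decreasing clopen cylinders $E_m=\bigcap_{i=1}^m\{\mathbf{z}:z_i\neq N_i\}$ satisfy $\mu(E_m)=\prod_{i=1}^m(1-1/c_i)$, and continuity of $\mu$ from above yields $\mu(E)=\prod_{n\in\N}(1-1/c_n)$.

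Next I would verify that $(X,G)$ is uniquely ergodic. Each $g\in G$ acts on $G/N_n$ as a bijection of a finite discrete space, hence trivially as an isometry, so the induced map on $X$ is an isometry of the product metric~\eqref{product-metric}. Combined with minimality of the action (established in Proof 2 of Theorem~\ref{bfreegeneral}), the remark at the end of Subsection~\ref{ergodic-business} shows that $(X,G)$ is uniquely ergodic; in particular, $\mu$ is the unique $G$-invariant Borel probability measure and $(X,\mathcal{B},\mu,G)$ is ergodic.

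The lower bound $\overline{d}_\Phi(R_E(\mathbf{y}))\geq\mu(E)$ for $\mu$-almost every $\mathbf{y}$ is then immediate from the ergodic version of Theorem~\ref{dynamical-destruction}. For the matching upper bound I would approximate $E$ from above by the cylinders $E_m$: since each $\mathbf{1}_{E_m}$ is continuous, Theorem~\ref{uniqueergodic} gives $d_\Phi(R_{E_m}(\mathbf{y}))=\mu(E_m)$ for \emph{every} $\mathbf{y}\in X$, and from $R_E(\mathbf{y})\subseteq R_{E_m}(\mathbf{y})$ we obtain $\overline{d}_\Phi(R_E(\mathbf{y}))\leq\mu(E_m)$ for every $m$ and every $\mathbf{y}$. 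Letting $m\to\infty$ closes the sandwich, and transporting back through $\Psi$ yields~\eqref{actuallybfgd} for $\mu$-a.e.\ $\mathbf{x}$.

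I expect the only real subtlety to lie in the upper bound: the indicator $\mathbf{1}_E$ is not continuous---since $E$ is closed with empty interior, its discontinuity set is all of $E$, which has positive $\mu$-measure whenever the product is nonzero---so Theorem~\ref{uniqueergodic} cannot be applied to $E$ directly. Approximating by the clopen sets $E_m$ is what sidesteps this and makes unique ergodicity (rather than merely the mean ergodic theorem, which gives $L^2$ convergence only up to subsequences) essential. The other ingredients---the computation of $\mu(E)$, the isometry check, and the measure-preserving transfer via $\Psi$---are routine.
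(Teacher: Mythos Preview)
Your argument is correct and follows the same route as the paper: identify $A_\mathbf{x}=R_E(\Psi(\mathbf{x}))$, note that the minimal isometric action forces unique ergodicity (hence ergodicity), and invoke Theorem~\ref{dynamical-destruction}. The paper simply cites Theorem~\ref{dynamical-destruction} to conclude the equality $\overline d_\Phi(A_\mathbf{x})=\mu(E)$, whereas that theorem, as stated, only yields the inequality $\overline d_\Phi(R_E(z))\geq\mu(E)$ for a.e.\ $z$; your explicit upper-bound step---approximating $E$ from above by the clopen cylinders $E_m$ and applying Theorem~\ref{uniqueergodic} to the continuous functions $\mathbf 1_{E_m}$---fills this in cleanly and even gives $\overline d_\Phi(R_E(\mathbf{y}))\leq\mu(E)$ for \emph{every} $\mathbf{y}$. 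Your diagnosis that $\mathbf 1_E$ has discontinuity set $E$ itself (so Theorem~\ref{uniqueergodic} cannot be invoked directly) is exactly the point. The transfer through $\Psi$ and the computation $\mu(E)=\prod(1-1/c_n)$ are handled the same way in both proofs, with you spelling out that $\Psi$ is $\mu$-preserving where the paper leaves this implicit.
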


\begin{proof}
Since $G$ acts minimally on $X$ by isometries, the system $(X,\mathcal B,\mu,G)$ is uniquely ergodic, and in particular ergodic.
Let $E \subseteq X$ be as in \eqref{e}, so that $A_\mathbf{x}=R_E((x_n^{-1}N_n)_{n\in\N})$. Then by Theorem \ref{dynamical-destruction}, we have, for $\mu$-almost every $\mathbf{x}\in X$,
\[\overline{d}_\Phi(A_\mathbf{x})=\mu(E)=\prod_{n\in\N}{\left(1-\frac{1}{c_n}\right)}.\]
(That $\mu(E)=\prod(1-1/c_n)$ can be easily checked since $E$ is closed.)
\end{proof}

In practice, Theorem \ref{bfreegeneraldensity} will not be useful for constructing explicit discordant sets as we cannot easily control for which $\mathbf{x}$ equation \eqref{actuallybfgd} holds.

\subsection{Some number-theoretic examples of discordant sets}\label{nt-examples}

In this subsection we apply Theorem \ref{bfreegeneral} to construct some discordant sets.
In the following subsection we provide proofs of positive density for all but one of the constructions.
The most primitive example is the following.

\begin{example}\label{sqfree}
The set $Q$ of squarefree integers is not piecewise syndetic: \[Q=\Z\setminus\bigcup_{\text{$p$ prime}}p^2\Z.\] It is classical that $d(Q)=6/\pi^2$, so that $Q$ is discordant.
\end{example}

Example \ref{sqfree} can be generalized a great deal. We provide five generalizations below.

\begin{example}\label{bfree}
Let $\mathscr{B}=(b_n)$ be a sequence of pairwise coprime positive integers and let $\mathcal{F}_\mathscr{B}$ denote the set of \textit{$\mathscr{B}$-free} integers \[\mathcal{F}_\mathscr{B}=\Z\setminus\bigcup_{n\in\N}b_n\Z,\] consisting of integers not divisible by any element of $\mathscr{B}$. Then $\mathcal{F}_\mathscr{B}$ is not piecewise syndetic, and if $\sum1/b_n<\infty$, then $d(\mathcal{F}_\mathscr{B})=\prod(1-1/b_n)$ \cite[Theorem 9, Section V.5]{HR}. We provide a proof of this fact below (Theorem \ref{bfree-density}). A well-known but notable special case is that the set of \textit{$k$-free integers}, those divisible by no perfect $k$th power, has density $1/\zeta(k)$.
\end{example}

\begin{example}\label{kfree}
Let $K$ be a finite extension of $\Q$, let $k\geq2$, and let $Q_{K,k}$ denote the \textit{$k$-free} integers $K$: \[
Q_{K,k}=\mathcal O_K\setminus\bigcup_{\substack{\mathfrak p \subseteq \mathcal{O}_K \\ \text{prime ideal}}}\mathfrak p^k,\]
where $\mathcal{O}_K$ denotes the ring of integers in $K$.
Using an appropriate ring-theoretic version of the Chinese remainder theorem, we may apply Theorem \ref{bfreegeneral} to show that $Q_{K,k}$ is not piecewise syndetic. Moreover, as $K/\Q$ is a finite extension, $\mathcal O_K\cong\Z^d$ as additive groups, where $d$ is the degree of $K/\Q$. Define $\Phi$ to be a sequence of balls (pulled back from $\Z^d$ to $\mathcal O_{K}$ under this isomorphism), with respect to the $L^1$ norm, centered at $0$ and having increasing radius. Then $d_\Phi(Q_{K,k})=1/\zeta_K(k)$ as shown in \cite[Corollary 4.6]{CV}, where $\zeta_K$ denotes the Dedekind zeta function of $K$. Equivalently, $d_\Phi(Q_{K,k})=\prod_{\mathfrak p}(1-1/N(\mathfrak p)^k)$, where $N(\mathfrak p)$ denotes the \textit{absolute norm} of $\mathfrak p$, i.e., the cardinality of the residue field $\mathcal O_K/\mathfrak p$. Hence $Q_{K,k}$ is discordant.
\end{example}


\begin{example}\label{byfree}
Given $k\in\N$ and a prime $p$, write $e_p(k)$ to denote the exponent on $p$ in the prime factorization of $k$.
In \cite[Theorem 3.9]{BBHS}, it is shown in a somewhat cumbersome way that, for any infinite set $P$ of primes and any corresponding collection of nonempty subsets $(Y_p)_{p\in P}$ of $\N$, the set \[A=\{k\in\N\colon\text{$e_p(k)\notin Y_p$ for each $p\in P$}\}\] is not piecewise syndetic.
This strengthens the non-piecewise syndeticity result of Example \ref{sqfree} and, in certain cases, Example \ref{bfree}, since it is clear that $A$ is a superset of the $(p^{\min(Y_p)})_{p\in P}$-free numbers. Below, we state this result in better generality and deduce it easily from Theorem \ref{bfreegeneral}.

Given any $b\in\N$ and $k\in\Z$, define $e_b(k)$ to be the largest power of $b$ dividing $k$ (set $e_b(0)=\infty$).
Now let $\mathscr{B} = (b_n)$ be a sequence of pairwise coprime positive integers and let $u = (u_n)$ be any sequence of positive integers. Define \[\mathcal{F}_{\mathscr{B}}^u=\{k\in\Z\colon\text{$e_{b_n}(k)\neq u_n$ for each $n\in\N$}\}.\]
Then $\mathcal{F}_{\mathscr{B}}^u\subseteq\Z\setminus\bigcup(b_n^{u_n+1}\N+b_n^{u_n})$, so $\mathcal{F}_{\mathscr{B}}^u$ is not piecewise syndetic by Theorem \ref{bfreegeneral}.
In Theorem \ref{byfree-density} below, we prove that, if $\sum b_n^{-u_n}<\infty$, then \[d(\mathcal{F}_{\mathscr{B}}^u)=\prod_{n\in\N}{\left(1-\frac{b_n-1}{b_n^{u_n+1}}\right)}.\]
\end{example}

\begin{example}\label{coprimepairs}
The set $C$ consisting of pairs $(a, b)$ of coprime integers can be expressed as
\[
C = (\Z \times \Z) \setminus \bigcup_{\text{$p$ prime}} (p\Z \times p\Z),
\]
making clear that $C$ is not piecewise syndetic by Theorem \ref{bfreegeneral}. It is well known that $d(C) = 6/\pi^2$, which will follow from the more general Theorem \ref{bbfree-density} (in other words, the probability that two ``randomly chosen'' integers are coprime is $6/\pi^2$; see, e.g., \cite[Theorem 332]{HaWr}).
Many possible generalizations in this direction can be imagined.
\end{example}

\begin{example}\label{heisenberg}
Our final example will take place in a noncommutative setting.
Consider the Heisenberg group $H_3(\Z)$, the matrix group
\[
{\left\{{\begin{pmatrix}
1 & a & c \\
0 & 1 & b \\
0 & 0 & 1
\end{pmatrix}}\colon a,b,c\in\Z\right\}}.
\]
Associating these matrices with triples $(a, b, c)$, we view $H_3(\Z)$ as $\Z^3$ with the group operation $(a_1, b_1, c_1) \cdot (a_2, b_2, c_2) = (a_1 + a_2, b_1 + b_2, a_1 b_2 + c_1 + c_2)$.
For each $n \in \N$, the kernel $n\Z^3$ of the entrywise reduction map $H_3(\Z) \to H_3(\Z/n\Z)$ is a normal subgroup of $H_3(\Z)$ of index $n^3$.
Hence, for any sequence $(b_n)$ of pairwise coprime positive integers, the set $H_3(\Z) \setminus \bigcup b_n\Z^3$ is not piecewise syndetic in $H_3(\Z)$ by Theorem \ref{bfreegeneral}.
We will see in Theorem \ref{heisenberg-density} that this set has density $\prod(1-1/b_n^3)$ with respect to an appropriate F\o lner sequence in $H_3(\Z)$.
\end{example}

\subsection{Computing density}

Before computing the density of some of the non-piecewise syndetic sets we constructed in Subsection \ref{nt-examples}, we make some definitions and prove a general lemma.

Suppose we have a countably infinite amenable cancellative semigroup $G$ with a F\o lner sequence $\Phi$.
Given a function $f \colon G \to \R$, we define
\[
\overline A_\Phi(f) = \limsup_{n \to \infty} \frac{1}{|\Phi_n|} \sum_{g \in \Phi_n} f(g),
\]
and likewise $\underline A_\Phi(f)$ and $A_\Phi(f)$ in the obvious way.
We will be most interested in the case when $A_\Phi(f)$ exists.
Assuming this limit exists, it has some obvious but useful properties.
First, given $E \subseteq G$, we have $A_\Phi(\mathbf1_E) = d_\Phi(E)$.
Second, if $f_1, f_2 \colon G \to \R$ satisfy $f_1 \leq f_2$, then $A_\Phi(f_1) \leq A_\Phi(f_2)$.
Finally, $f \mapsto A_\Phi(f)$ is \textit{finitely additive}:
$A_\Phi(f_1+f_2) = A_\Phi(f_1) + A_\Phi(f_2)$.

In order to prove that a set of the form $G \setminus \bigcup E_n$ has density, we will impose a number of nice properties on the family $(E_n)$.
We adopt the notation $E_I = \bigcap_{i \in I} E_i$ for $I \subseteq \N$. In addition, we use $\mathcal{P}_k(\N)$ to refer to the set of $k$-element subsets of $\N$.

Specifically, we say that $(E_n)$ is \textit{$\Phi$-inclusion-exclusion good}, or \textit{$\Phi$-I.E.\ good} if
\begin{enumerate}
\item
$d_\Phi(E_n)$ exists for each $n\in\N$, and $\sum_{n \in \N}d_\Phi(E_n)<\infty$;
\item
$E_I = \varnothing$ for every infinite $I \subseteq \N$;
\item
for every finite $I \subseteq \N$, $d_\Phi(E_I) = \prod_{i \in I}d_\Phi(E_i)$; and
\item
for each $k \in \N$, $A_\Phi(\sum_{I \in \mathcal{P}_k(\N)}\mathbf1_{E_I}) = \sum_{I \in \mathcal P_k(\N)}d_\Phi(E_I)$.
\end{enumerate}
For $\Phi$-I.E.\ good families, the following lemma states that an asymptotic version of the inclusion-exclusion principle holds.

\begin{lemma}\label{nicely-removable}
Suppose $G$ and $\Phi$ are as above and $(E_n)$ is a $\Phi$-I.E.\ good family of subsets of $G$. Write $\mathcal{F} = G \setminus \bigcup E_n$.
Then $d_\Phi(\mathcal{F}) = \prod(1 - d_\Phi(E_n))$.
\end{lemma}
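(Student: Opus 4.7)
The plan is to prove the identity by a truncation argument combined with finite inclusion-exclusion. For each $N \in \N$, let $\mathcal{F}_N = G \setminus \bigcup_{n \leq N} E_n$ and $R_N = \bigcup_{n > N} E_n$, so that $\mathcal{F} \subseteq \mathcal{F}_N \subseteq \mathcal{F} \cup R_N$. The strategy is to (i) compute $d_\Phi(\mathcal{F}_N)$ exactly via inclusion-exclusion, (ii) show that $\overline{d}_\Phi(R_N) \to 0$ as $N \to \infty$, and (iii) sandwich to conclude.

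For step (i), I would use the pointwise identity $\mathbf{1}_{\mathcal{F}_N} = \prod_{n = 1}^N (1 - \mathbf{1}_{E_n}) = \sum_{I \subseteq \{1,\dots,N\}}(-1)^{|I|}\mathbf{1}_{E_I}$. Since this is a \emph{finite} sum and each $d_\Phi(E_I)$ exists and equals $\prod_{i \in I}d_\Phi(E_i)$ by property (c), finite additivity of $A_\Phi$ gives
\[
d_\Phi(\mathcal{F}_N) = \sum_{I \subseteq \{1,\dots,N\}}(-1)^{|I|}\prod_{i \in I}d_\Phi(E_i) = \prod_{n = 1}^N(1 - d_\Phi(E_n)).
\]
Taking $N \to \infty$ yields the target product, so the remaining task is to control the tail.

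For step (ii), the crucial ingredient is property (d) applied with $k = 1$: it says exactly that $A_\Phi\bigl(\sum_{n \in \N}\mathbf{1}_{E_n}\bigr) = \sum_{n \in \N}d_\Phi(E_n)$, which is finite by (a). By finite additivity, $A_\Phi\bigl(\sum_{n > N}\mathbf{1}_{E_n}\bigr) = \sum_{n > N}d_\Phi(E_n)$. Since $\mathbf{1}_{R_N} \leq \sum_{n > N}\mathbf{1}_{E_n}$ pointwise (note that by property (b) the right-hand sum is a finite sum at each $g \in G$), monotonicity gives $\overline{d}_\Phi(R_N) \leq \sum_{n > N}d_\Phi(E_n)$, which tends to $0$ by (a).

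Finally, for step (iii), the inclusion $\mathcal{F} \subseteq \mathcal{F}_N$ gives $\overline{d}_\Phi(\mathcal{F}) \leq d_\Phi(\mathcal{F}_N)$, while $\mathbf{1}_\mathcal{F} \geq \mathbf{1}_{\mathcal{F}_N} - \mathbf{1}_{R_N}$ combined with standard upper/lower asymptotic averaging yields $\underline{d}_\Phi(\mathcal{F}) \geq d_\Phi(\mathcal{F}_N) - \overline{d}_\Phi(R_N)$. Letting $N \to \infty$ and invoking steps (i) and (ii), both bounds converge to $\prod_{n \in \N}(1 - d_\Phi(E_n))$, so the limit $d_\Phi(\mathcal{F})$ exists and equals this product. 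The only subtle point is the interchange of limit and infinite sum in step (ii), and this is exactly what condition (d) is designed to handle; without it, $\overline{d}_\Phi$ of an infinite union need not be controlled by the sum of densities in a usable way.
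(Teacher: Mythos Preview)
Your argument is correct, and it takes a genuinely different route from the paper's. For the lower bound, the paper proceeds via Bonferroni-type inequalities: it truncates the infinite alternating inclusion-exclusion expansion at an odd stage and verifies the pointwise inequality
\[
\mathbf{1}_{\mathcal F}\ \geq\ 1-\sum_{i\in\N}\mathbf1_{E_i}+\sum_{I\in\mathcal P_2(\N)}\mathbf1_{E_I}-\cdots-\sum_{I\in\mathcal P_{2n-1}(\N)}\mathbf1_{E_I},
\]
then applies $A_\Phi$ term by term. That step requires property (\textit{d}) for \emph{every} $k$. Your sandwich $\mathcal F\subseteq\mathcal F_N\subseteq\mathcal F\cup R_N$ avoids Bonferroni entirely: the only place you invoke (\textit{d}) is at $k=1$, to conclude $A_\Phi\bigl(\sum_{n>N}\mathbf1_{E_n}\bigr)=\sum_{n>N}d_\Phi(E_n)$ and hence $\overline d_\Phi(R_N)\to 0$. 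So your proof is both shorter and establishes the lemma under a strictly weaker hypothesis (namely (\textit{a}), (\textit{b}), (\textit{c}), and the $k=1$ case of (\textit{d})). The paper's approach, by contrast, makes the full inclusion-exclusion structure explicit and perhaps better motivates why the family is called ``I.E.\ good'', but for the lemma as stated your tail-estimate method is the more economical one.
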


\begin{proof}
By property (\textit{c}) of $\Phi$-I.E.\ goodness and the standard inclusion-exclusion principle, $\overline d_\Phi(\mathcal{F}) \leq \prod(1-d_\Phi(E_n))$, since $\mathcal{F} \subseteq G \setminus \bigcup_{i=1}^n E_i$ for each $n \in \N$.
Now let $n \in \N$.
Using the identity
\[
\prod_{n\in\N} (1-x_n)
=
\sum_{n=0}^{\infty} (-1)^n \sum_{I\in\mathcal{P}_n(\N)} \prod_{i\in I} x_i,
\]
which holds as long as $\sum x_i<\infty$,
properties (\textit{a}) and (\textit{c}) of $\Phi$-I.E.\ goodness yield
\[
\prod_{n\in\N}(1-d_\Phi(E_n))
=
\sum_{n=0}^{\infty}(-1)^n\sum_{I\in\mathcal{P}_n(\N)} d_\Phi(E_I).
\]
Thus $d_\Phi(\mathcal{F})=\prod(1-d_\Phi(E_n))$ will follow if we can demonstrate that
\[
\underline d_\Phi(\mathcal{F}) \geq 1 - \sum_{i\in\N}d_\Phi(E_i)+\sum_{I\in\mathcal{P}_2(\N)}d_\Phi(E_I)-\cdots-\sum_{I\in\mathcal P_{2n-1}(\N)}d_\Phi(E_I).
\]
To do this, it suffices, by the finite additivity of $f \mapsto A_\Phi(f)$ and property (\textit{d}) of $\Phi$-I.E.\ goodness, to show
\begin{equation}\label{lotsofindicators}
\mathbf1_\mathcal{F} \geq 1-\sum_{i\in\N}\mathbf1_{E_i}+\sum_{I\in\mathcal P_2(\N)}\mathbf1_{E_I}-\cdots-\sum_{I\in\mathcal P_{2n-1}(\N)}\mathbf1_{E_I}.
\end{equation}
Let $f$ denote the function on the right side of \eqref{lotsofindicators} and let $r \in \N$. If $r\in \mathcal{F}$, then clearly $f(r)=1$.
Otherwise, due to property (\textit{b}) of $\Phi$-I.E.\ goodness, we may suppose $r$ is contained in exactly $k\geq1$ elements of $(E_n)$.
Then
\[
f(r) = \sum_{i=0}^{2n-1}(-1)^i\binom{k}{i}=-\binom{k-1}{2n-1}\leq0,
\]
so indeed $\mathbf1_\mathcal{F} \geq f$.
This verifies that $\overline d_\Phi(\mathcal{F}) = \underline d_\Phi(\mathcal{F}) = \prod(1-d_\Phi(E_n))$.
\end{proof}

Using this lemma, we compute the density of sets constructed in Examples \ref{bfree} and \ref{byfree}.
The following two theorems use the standard F\o lner sequence $(\{1,\dots,n\})_{n\in\N}$ in $\N$, which (as with the notation $d$) we omit reference to.

\begin{theorem}\label{bfree-density}
Let $\mathscr{B} = (b_n)$ be a sequence of pairwise coprime positive integers satisfying $\sum1/b_n<\infty$.
Then $d(\mathcal{F}_\mathscr{B}) = \prod(1-1/b_n)$.
\end{theorem}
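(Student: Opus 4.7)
The plan is to verify that the family $(E_n)_{n \in \N}$, with $E_n = b_n \N$, is $\Phi$-I.E.\ good for the standard F\o lner sequence $\Phi = (\{1, \ldots, n\})_{n \in \N}$ in $\N$, and then invoke Lemma \ref{nicely-removable}. Since $\mathcal{F}_\mathscr{B} \subseteq \Z$ is symmetric about $0$, it suffices to compute the density of the corresponding set in $\N$.

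Conditions (a), (b), and (c) of $\Phi$-I.E.\ goodness should fall out quickly from the Chinese remainder theorem. For (a), $d(b_n\N) = 1/b_n$ and summability is a hypothesis. Pairwise coprimality implies $E_I = \big(\prod_{i \in I} b_i\big)\N$ for finite $I$, which gives (c) directly; and since pairwise coprime positive integers must be distinct (apart from possibly a single $b_n = 1$, a case that trivializes the theorem by forcing $\mathcal{F}_\mathscr{B} = \varnothing$), the $b_n$ tend to infinity, so $E_I = \varnothing$ for any infinite $I$, which gives (b).

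The main obstacle will be condition (d), which requires interchanging an infinite sum with a limit. For fixed $k \in \N$, setting $\psi_k = \sum_{I \in \mathcal{P}_k(\N)} \mathbf{1}_{E_I}$, I would write
\[
\frac{1}{n} \sum_{m = 1}^n \psi_k(m) = \sum_{I \in \mathcal{P}_k(\N)} \frac{|\{1, \ldots, n\} \cap E_I|}{n}
\]
and justify the interchange by dominated convergence on the counting measure of $\mathcal{P}_k(\N)$: each summand on the right converges to $d(E_I)$ and is dominated by $d(E_I) = 1/\prod_{i \in I} b_i$ uniformly in $n$ (since $\lfloor n/q \rfloor \leq n/q$), while the total $\sum_{I \in \mathcal{P}_k(\N)} d(E_I)$ equals the $k$-th elementary symmetric function in $(1/b_n)_{n \in \N}$, which is finite because $\sum 1/b_n < \infty$. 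This yields $A(\psi_k) = \sum_I d(E_I)$, and Lemma \ref{nicely-removable} then gives $d(\mathcal{F}_\mathscr{B}) = \prod (1 - 1/b_n)$.
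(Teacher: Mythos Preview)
Your proposal is correct and follows essentially the same approach as the paper: verify that $(b_n\N)$ is $\Phi$-I.E.\ good and apply Lemma \ref{nicely-removable}, with the only nontrivial step being condition (d), handled via the uniform bound $\lfloor n/q\rfloor \le n/q$. The sole cosmetic difference is that the paper reduces (d) to the case $k=1$ (since each $E_I$ is again of the form $b\N$) and spells out the $\limsup$/$\liminf$ inequalities by hand, whereas you treat general $k$ directly by invoking dominated convergence; the underlying argument is the same.
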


\begin{proof}
We prove this statement for $\mathcal{F}_\mathscr{B} = \N\setminus\bigcup b_n\N$ for convenience, but it is of no consequence, since $\mathcal{F}_\mathscr{B}$ is symmetric about $0$.
We must show that $(b_n\N)$ is I.E.\ good (with respect to the standard F\o lner sequence as mentioned above).
Only property (\textit{d}) is not immediate.
Notice that it suffices to prove the case $k=1$, since any intersection of sets of the form $b\Z$ again has this form.

Write $f=\sum\mathbf1_{b_n\N}$.
Since $\mathbf1_{b_1\N} \leq \mathbf1_{b_1\N}+\mathbf1_{b_2\N}\leq\cdots\leq f$, clearly $\underline A(f)\geq\sum1/b_n$.
To prove $\overline A(f)\leq\sum1/b_n$, the key point is that, for each $k,n\in\N$,
\[
\frac1k\sum_{i=1}^k\mathbf1_{b_n\N}(i)\leq\frac1{b_n},
\]
which is just a statement of the fact that there are no more than $k/b_n$ multiples of $b_n$ in $\{1, \dots, k\}$.
It follows that, for each $k\in\N$,
\[
\frac{1}{k}\sum_{i=1}^kf(i)\leq\sum_{n\in\N}\frac1{b_n},
\]
whence $\overline A(f) = \underline A(f) = \sum1/b_n$. Thus $(b_n\N)$ is I.E.\ good.
\end{proof}

\begin{theorem}\label{byfree-density}
Let $\mathscr{B} = (b_n)$ be a sequence of pairwise coprime positive integers and let $u = (u_n)$ be a sequence of positive integers for which $\sum b_n^{-u_n}<\infty$.
Then
\[
d(\mathcal{F}_\mathscr{B}^u) = \prod_{n\in\N}{\left(1-\frac{b_n-1}{b_n^{u_n+1}}\right)}.
\]
\end{theorem}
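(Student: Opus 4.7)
The plan is to apply Lemma \ref{nicely-removable}. Writing $\mathcal{F}_\mathscr{B}^u = \N \setminus \bigcup_n E_n$ with $E_n = b_n^{u_n}\N \setminus b_n^{u_n+1}\N = \{k \in \N : e_{b_n}(k) = u_n\}$ (as in Theorem \ref{bfree-density}, we work in $\N$ by symmetry about $0$), a direct count gives $d(E_n) = 1/b_n^{u_n} - 1/b_n^{u_n+1} = (b_n - 1)/b_n^{u_n+1}$, so it suffices to verify that the family $(E_n)$ is I.E.\ good with respect to the standard F\o lner sequence.

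Properties (\textit{a}) and (\textit{b}) of I.E.\ goodness are straightforward: (\textit{a}) follows from $d(E_n) \leq 1/b_n^{u_n}$ together with $\sum 1/b_n^{u_n} < \infty$, and (\textit{b}) from the fact that a nonzero integer is divisible by only finitely many of the pairwise coprime numbers $b_n^{u_n}$ (and $0 \notin E_n$ since $e_{b_n}(0) = \infty$). For (\textit{c}), given a finite $I \subseteq \N$, the moduli $b_i^{u_i+1}$ with $i \in I$ are pairwise coprime, so the Chinese remainder theorem (Lemma \ref{chinese}) identifies $E_I = \bigcap_{i \in I} E_i$ with a product of residue sets of respective sizes $b_i - 1$ inside $\Z/(\prod_{i \in I} b_i^{u_i + 1})\Z$, yielding the factorization $d(E_I) = \prod_{i \in I}(b_i - 1)/b_i^{u_i + 1} = \prod_{i \in I} d(E_i)$.

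The main obstacle is property (\textit{d}). I would handle this via a truncation argument. Set $f = \sum_{I \in \mathcal{P}_k(\N)} \mathbf{1}_{E_I}$ and, for each $M \in \N$, $f^{(M)} = \sum_{I \in \mathcal{P}_k(\{1, \dots, M\})} \mathbf{1}_{E_I}$; then by finite additivity of $A$ combined with (\textit{c}), the limit $A(f^{(M)})$ exists and equals $\sum_{I \subseteq \{1, \dots, M\},\, |I| = k} d(E_I)$. The key estimate is the uniform-in-$N$ bound $|E_I \cap \{1, \dots, N\}|/N \leq \prod_{i \in I} 1/b_i^{u_i}$, which holds because $E_I \subseteq (\prod_{i \in I} b_i^{u_i})\N$ by pairwise coprimality. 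Summing over those $I \in \mathcal{P}_k(\N)$ with $\max I > M$ bounds $\frac{1}{N}\sum_{n \leq N}(f(n) - f^{(M)}(n))$ above by a tail of the $k$-th elementary symmetric function in $(1/b_n^{u_n})$, which is a convergent series (and hence has vanishing tails) thanks to $\sum 1/b_n^{u_n} < \infty$. Combined with the trivial lower bound $f \geq f^{(M)}$, letting $M \to \infty$ pins down $A(f) = \sum_{I \in \mathcal{P}_k(\N)} d(E_I)$, establishing (\textit{d}).

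With I.E.\ goodness verified, Lemma \ref{nicely-removable} delivers the desired identity $d(\mathcal{F}_\mathscr{B}^u) = \prod_n (1 - (b_n - 1)/b_n^{u_n + 1})$.
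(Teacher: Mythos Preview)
Your proof is correct and follows essentially the same approach as the paper: you set $E_n=\{k:e_{b_n}(k)=u_n\}$, verify I.E.\ goodness via the Chinese remainder theorem for property (\textit{c}) and the key inclusion $E_I\subseteq\bigl(\prod_{i\in I}b_i^{u_i}\bigr)\N$ for the tail estimate in property (\textit{d}), and then invoke Lemma~\ref{nicely-removable}. The only cosmetic difference is that the paper handles the ``head'' in (\textit{d}) by an explicit periodicity bound $\frac{a+1}{a}\,d(B_I)$, whereas you appeal directly to finite additivity of $A$ on the truncated sum; both are valid.
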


\begin{proof}
As in Theorem \ref{bfree-density} we confine ourselves to $\N$.
For each $n \in \N$ define $B_n = \{k\in\N\colon e_{b_n}(k)=u_n\}$, so that $\mathcal{F}_\mathscr{B}^u \cap \N = \N \setminus \bigcup B_n$.
We aim to show that $(B_n)$ is an I.E.\ good family.
(The density $d(B_n)$ is easy to compute but is also derived below.)
Clearly property (\textit{b}) of I.E.\ goodness is satisfied. 

Given $I\subseteq \N$, define also $B_I = \bigcap_{i \in I}B_i$, $b_I = \prod_{i \in I} b_i$, and $r_I = \prod_{i \in I} b_i^{u_i}$.
Observe that $B_I \subseteq r_I\N$.
Define $C_I = B_I/r_I$ to be the set of positive integers not divisible by $b_i$ for any $i \in I$.
Observe that $C_I$ is $b_I$-periodic, i.e., for each $k \in \N$, one has $k \in C_I$ if and only if $b_I + k \in C_I$.
In particular, $d(C_I) = |C_I \cap \{1, \dots, b_I\}|/b_I$.
Now consider the isomorphism $\phi\colon\Z/b_I\Z \overset{\sim}{\to} \prod_{i \in I} \Z/b_i\Z$ afforded by the classical Chinese remainder theorem.
An element $x \in \Z/b_I\Z$ is in $C_I$ if and only if $\phi(x)$ is nonzero in every coordinate, from which it follows that $|C_I \cap \{1, \dots, b_I\}| = \prod_{i \in I} (b_i - 1)$.
From this we get
\[
d(B_I) =
d(r_IC_I) =
\frac{1}{b_Ir_I} \prod_{i \in I} (b_i - 1)
=
\prod_{i \in I} \frac{b_i - 1}{b_i^{u_i + 1}}
=
\prod_{i \in I} d(B_i),
\]
proving that property (\textit{c}) above holds for $(B_n)$.
In addition, the formula we have obtained for $d(B_i)$ and the hypotheses on $\mathscr{B}$ and $u$ verify property (\textsl{a}).



Now fix $k\in\N$ and set $f=\sum_{I\in\mathcal P_k(\N)}\mathbf 1_{B_I}$. As in Theorem \ref{bfree-density} we have $\underline A(f)\geq\sum d(B_I)$.
To prove that $\overline A(f)\leq\sum d(B_I)$, fix $\varepsilon>0$. Select $N\in\N$ such that
\[
\sum_{\substack{I\in\mathcal P_k(\N)\\\max(I)>N}}\frac{1}{r_I}<\frac{\varepsilon}{2}.
\]
Now let $I\in\mathcal P_k(\N)$ and note that, since $B_I=r_IC_I$ where $C_I$ is $b_I$-periodic, we have, for each $a\in\N$ and $m\geq ab_Ir_I$,
\[
\frac{|B_I\cap\{1,\dots,m\}|}{m}\leq\frac{(a+1)\cdot|B_{I}\cap\{1,\dots,b_Ir_I\}|}{ab_Ir_I}=\frac{a+1}{a}\cdot d(B_I).
\]
Find $a\in\N$ for which $\sum_{I\in\mathcal P_k(\N)}d(B_I)/a<\varepsilon/2$ and set $R=\max_{I\in\mathcal P_k(\{1,\dots,N\})}b_Ir_I$. Then, for each $m\geq aR$,
\begin{align*}
\frac{1}{m}\sum_{k=1}^mf(k)&=\sum_{I\in\mathcal P_k(\{1,\dots,N\})}\frac{|B_I\cap\{1,\dots,m\}|}{m}+\sum_{\substack{I\in\mathcal P_k(\N)\\\max(I)>N}}\frac{|B_I\cap\{1,\dots,m\}|}{m}\\
&\leq\sum_{I\in\mathcal P_k(\{1,\dots,N\})}\frac{a+1}{a}\cdot d(B_I)+\sum_{\substack{I\in\mathcal P_k(\N)\\\max(I)>N}}\frac{1}{r_I}\\
&<\left(\sum_{I\in\mathcal P_k(\N)}d(B_I)\right) + \frac{\varepsilon}{2}+\frac{\varepsilon}{2}
\end{align*}
(we have $|B_I \cap \{1, \dots, m\}|/m \leq 1/r_I$ since $B_I \subseteq r_I\N$).
This verifies that $(B_n)$ is indeed I.E.\ good and completes the proof.
\end{proof}

We next compute the density of the sets constructed in Examples \ref{coprimepairs} and \ref{heisenberg}. To do so, we will need a technical lemma.

\begin{lemma}
\label{highdimremovability}
Let $G$ and $H$ be  countably infinite  amenable cancellative semigroups with F\o lner sequences $\Phi$ and $\Psi$, respectively. Let $(D_n)$ be a $\Phi$-I.E.\ good family of subsets of $G$ and $(E_n)$ be a $\Psi$-I.E.\ good family of subsets of $H$. Then $(D_n\times E_n)$ is a $(\Phi\times\Psi)$-I.E.\ good family of $G\times H$, where $\Phi \times \Psi = (\Phi_n \times \Psi_n)_{n \in \N}$.
\end{lemma}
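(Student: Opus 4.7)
The plan is to verify each of the four defining properties of $(\Phi \times \Psi)$-I.E.\ goodness for the family $(D_n \times E_n)$. As a preliminary, note that $G \times H$ is a countably infinite amenable cancellative semigroup and $\Phi \times \Psi$ is a F\o lner sequence in it: for any $(g,h) \in G \times H$, the ratio $|(\Phi_n \times \Psi_n) \cap (g,h)(\Phi_n \times \Psi_n)|/|\Phi_n \times \Psi_n|$ factors as $(|\Phi_n \cap g\Phi_n|/|\Phi_n|) \cdot (|\Psi_n \cap h\Psi_n|/|\Psi_n|) \to 1$. An identical factorization shows that $d_{\Phi \times \Psi}(A \times B) = d_\Phi(A) d_\Psi(B)$ whenever the two densities on the right exist.

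Since $(D_n \times E_n)_I = D_I \times E_I$ for any $I \subseteq \N$, the first three properties are straightforward. Property (b) is immediate: $D_I = \varnothing$ for infinite $I$ forces $D_I \times E_I = \varnothing$. Property (c) follows by combining the product formula for $d_{\Phi \times \Psi}$ with property (c) for $(D_n)$ and $(E_n)$ individually. Property (a) reduces via $d_{\Phi \times \Psi}(D_n \times E_n) = d_\Phi(D_n) d_\Psi(E_n) \leq d_\Phi(D_n)$ to the summability of $(d_\Phi(D_n))$, which is assumed.

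The main technical step is property (d). Fix $k \in \N$ and set $F_k = \sum_{I \in \mathcal{P}_k(\N)} \mathbf{1}_{D_I \times E_I}$. Writing $a_I^{(n)} = |\Phi_n|^{-1}\sum_{g \in \Phi_n} \mathbf{1}_{D_I}(g)$ and $b_I^{(n)} = |\Psi_n|^{-1}\sum_{h \in \Psi_n} \mathbf{1}_{E_I}(h)$, a Fubini-style rewriting gives the Ces\`aro average of $F_k$ along $\Phi_n \times \Psi_n$ as $\sum_{I \in \mathcal{P}_k(\N)} a_I^{(n)} b_I^{(n)}$. For the lower bound on $\underline{A}_{\Phi \times \Psi}(F_k)$, truncate at $I \subseteq \{1,\dots,N\}$; the truncated sum is finite, so by the finite additivity of $A_{\Phi \times \Psi}$ and the product formula, its Ces\`aro average converges to $\sum_{I \subseteq \{1,\dots,N\}} d_\Phi(D_I) d_\Psi(E_I)$. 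Letting $N \to \infty$ yields the desired lower bound.

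The matching upper bound is the main obstacle. Using $a_I^{(n)} \leq 1$, the tail satisfies
\[
\sum_{\max(I) > N} a_I^{(n)} b_I^{(n)} \leq \sum_{\max(I) > N} b_I^{(n)} = \sum_{I \in \mathcal{P}_k(\N)} b_I^{(n)} - \sum_{I \subseteq \{1, \dots, N\}} b_I^{(n)}.
\]
Property (d) applied to $(E_n)$ shows that the first sum on the right converges to $\sum_{I \in \mathcal{P}_k(\N)} d_\Psi(E_I)$, while finite additivity handles the second. Hence this tail converges as $n \to \infty$ to $\sum_{\max(I) > N} d_\Psi(E_I)$, which itself tends to $0$ as $N \to \infty$ by summability. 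Combined with convergence of the truncated head, this yields $\overline{A}_{\Phi \times \Psi}(F_k) \leq \sum_{I \in \mathcal{P}_k(\N)} d_\Phi(D_I) d_\Psi(E_I)$, completing the verification of property (d).
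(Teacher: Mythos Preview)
Your proof is correct and follows essentially the same route as the paper. Both arguments dispose of properties (\textit{a})--(\textit{c}) quickly and then, for property (\textit{d}), factor the average over $\Phi_n \times \Psi_n$ as $\sum_I a_I^{(n)} b_I^{(n)}$; the paper then simply states the abstract claim that such a product sum converges to $\sum_I a_I b_I$ under the given hypotheses, whereas you supply the standard truncation-plus-tail-estimate proof of that claim.
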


\begin{proof}
The fact that $\Phi\times\Psi$ is a F\o lner sequence for $G\times H$ is easy and left to the reader.
We outline the proof that $(D_n\times E_n)$ satisfies property (\textit{d}) of $(\Phi \times \Psi)$-I.E.\ goodness; the others are straightforward.
We have, for each $k \in \N$,
\[
A_{\Phi\times\Psi}{\left(\sum_{I\in\mathcal{P}_k(\N)}\mathbf1_{D_I\times E_I}\right)}
=
\lim_{n\to\infty}\sum_{I\in\mathcal{P}_k(\N)}\frac{|D_I\cap\Phi_n|}{|\Phi_n|}\cdot\frac{|E_I\cap\Psi_n|}{|\Psi_n|}.
\]
But since
\[
A_\Phi{\left(\sum_{I \in \mathcal{P}_k(\N)}\mathbf{1}_{D_I}\right)}
=
\lim_{n \to \infty} \sum_{I \in \mathcal{P}_k(\N)} \frac{|D_I \cap \Phi_n|}{|\Phi_n|}
=
\sum_{I \in \mathcal{P}_k(\N)} d_\Phi(D_I),
\]
and likewise with $E_I$, proving property (\textit{d}) reduces to verifying the following claim:
given doubly-indexed sequences $(a_{k,n})$ and $(b_{k,n})$ of nonnegative quantities satisfying
\begin{itemize}
\item
$\lim_{n \to \infty} a_{k, n} = a_k$ and $\lim_{n \to \infty} b_{k, n} = b_k$ for each $k$, and
\item
$\lim_{n \to \infty} \sum_{k \in \N} a_{k, n} = \sum a_k < \infty$ and $\lim_{n \to \infty} \sum_{k \in \N} b_{k, n} = \sum b_k < \infty$,
\end{itemize}
one has $\lim_{n \to \infty} \sum_{k \in \N} a_{k,n} b_{k,n} = \sum a_k b_k$.
\end{proof}

\begin{theorem}\label{bbfree-density}
Let $d \geq 2$ and let $(b_{n,1}), \dots, (b_{n,d})$ be sequences such that, for each $i$, $(b_{n,i})$ consists of pairwise coprime positive integers.
Then
\[
d{\left(\Z^d \setminus \bigcup_{n \in \N} (b_{n,1}\Z\times\cdots\times b_{n,d}\Z)\right)} = \prod_{n \in \N}{\left(1 - \frac{1}{b_{n,1} \cdots b_{n,d}}\right)}.
\]
\end{theorem}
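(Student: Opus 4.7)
The plan is to apply Lemma \ref{nicely-removable} inside $\N^d$ and transfer the result to $\Z^d$ by symmetry. Specifically, I set $E_n = b_{n,1}\N \times \cdots \times b_{n,d}\N \subseteq \N^d$, equip $\N^d$ with the F\o lner sequence $\Phi = (\{1,\ldots,n\}^d)$, and aim to verify that $(E_n)$ is $\Phi$-I.E.\ good. Note that I cannot simply iterate Lemma \ref{highdimremovability} coordinate-by-coordinate: the hypothesis only guarantees that $\sum 1/(b_{n,1}\cdots b_{n,d})$ is finite, whereas $\sum 1/b_{n,i}$ need not converge in any single coordinate, so the families $(b_{n,i}\N)_n$ need not themselves be I.E.\ good.

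For property (\textit{a}), I would compute $d_\Phi(E_n) = 1/(b_{n,1}\cdots b_{n,d})$ and use AM-GM in the form $1/(b_{n,1}\cdots b_{n,d}) \leq (1/d)\sum_i 1/b_{n,i}^d$; since for each $i$ the integers $(b_{n,i})_n$ are distinct positive integers (being pairwise coprime), one has $\sum_n 1/b_{n,i}^d \leq \zeta(d) < \infty$ as $d \geq 2$, giving $\sum_n d_\Phi(E_n) < \infty$. Properties (\textit{b}) and (\textit{c}) follow directly from pairwise coprimality within each coordinate: for infinite $I$, the sequence $(b_{i,j})_{i \in I}$ has all but at most one term exceeding $1$, so the common multiples in $\N$ form an empty set, forcing $E_I = \varnothing$; for finite $I$, coprimality gives $\bigcap_{i \in I} b_{i,j}\N = \bigl(\prod_{i \in I} b_{i,j}\bigr)\N$ in each coordinate, yielding $d_\Phi(E_I) = \prod_{i \in I} d_\Phi(E_i)$.

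The technical heart of the argument is property (\textit{d}): for each $k$ and $f_k = \sum_{I \in \mathcal{P}_k(\N)}\mathbf{1}_{E_I}$, I must show $A_\Phi(f_k) = \sum_{I \in \mathcal{P}_k(\N)} d_\Phi(E_I)$. The lower bound $\underline A_\Phi(f_k) \geq \sum_I d_\Phi(E_I)$ follows from monotonicity and finite additivity as in Theorem \ref{bfree-density}. For the upper bound, the key observation is that, for every $I$ and $m$, $|E_I \cap \{1,\ldots,m\}^d|/m^d \leq d_\Phi(E_I)$ (since $|b\N \cap \{1,\ldots,m\}| \leq m/b$ in each coordinate), while the individual term tends to $d_\Phi(E_I)$ as $m \to \infty$. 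Because property (\textit{c}) combined with (\textit{a}) yields $\sum_{I \in \mathcal{P}_k(\N)} d_\Phi(E_I) = \sum_I \prod_{i \in I} d_\Phi(E_i) \leq \bigl(\sum_n d_\Phi(E_n)\bigr)^k/k! < \infty$, the dominated convergence theorem applied to counting measure on $\mathcal{P}_k(\N)$ gives $\lim_m \sum_I |E_I \cap \{1,\ldots,m\}^d|/m^d = \sum_I d_\Phi(E_I)$, which is what is required.

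With I.E.\ goodness in hand, Lemma \ref{nicely-removable} produces $d_\Phi(\N^d \setminus \bigcup E_n) = \prod_{n\in\N}(1 - 1/(b_{n,1}\cdots b_{n,d}))$. To pass to $\Z^d$, I would use that both the exceptional set and the F\o lner sequence $(\{-n,\ldots,n\}^d)$ are invariant under coordinatewise negation; splitting $\{-n,\ldots,n\}^d$ into the $2^d$ orthants (whose pairwise overlap lies on the coordinate axes, contributing $O(n^{d-1})$) reduces the $\Z^d$-density to the $\N^d$-density and recovers the claimed formula. The main obstacle is property (\textit{d}), where the infinite inclusion-exclusion sum must be controlled uniformly in $m$; the dominated convergence argument sketched above is the cleanest way past it.
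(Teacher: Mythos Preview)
Your argument is correct and in fact covers ground the paper's one-line proof does not. The paper iterates Lemma~\ref{highdimremovability}, which requires each coordinate family $(b_{n,i}\N)_n$ to be $\Phi$-I.E.\ good on its own; property~(\textit{a}) of I.E.\ goodness then forces $\sum_n 1/b_{n,i}<\infty$ for every $i$, a hypothesis absent from Theorem~\ref{bbfree-density} and violated in the motivating Example~\ref{coprimepairs} (where $b_{n,1}=b_{n,2}=p_n$ runs over primes and $\sum 1/p_n=\infty$). You sidestep this by verifying I.E.\ goodness directly for the product family $(E_n)$ in $\N^d$: the constraint $d\geq 2$ together with AM--GM secures $\sum_n 1/(b_{n,1}\cdots b_{n,d})<\infty$ from mere pairwise coprimality, and your dominated-convergence check of property~(\textit{d}), resting on the pointwise inequality $|E_I\cap\{1,\dots,m\}^d|/m^d\leq d_\Phi(E_I)$, is both cleaner and more elementary than the double-sequence claim inside Lemma~\ref{highdimremovability}. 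One minor quibble: ``pairwise coprime'' does not literally force distinctness (since $\gcd(1,1)=1$), so your $\zeta(d)$ bound tacitly assumes at most one $b_{n,i}$ equals $1$ per coordinate; this is harmless but worth stating.
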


\begin{proof}
This follows from Lemma \ref{nicely-removable} and Theorem \ref{bfree-density} via repeated applications of Lemma \ref{highdimremovability}.
\end{proof}

To apply Lemma \ref{highdimremovability} to the Heisenberg group $H_3(\Z)$, viewed as $\Z^3$ with the group operation described in Example \ref{heisenberg}, a laborious but straightforward computation shows that
\begin{equation}\label{heisenbergfolner}
\Phi = (\{-n, \dots, n\} \times \{-n^2, \dots, n^2\} \times \{-n, \dots, n\})_{n \in \N}
\end{equation}
is a F\o lner sequence in $H_3(\Z)$.

\begin{theorem}\label{heisenberg-density}
Let $(b_n)$ be a sequence of pairwise coprime positive integers.
Then, letting $\Phi$ be as in \eqref{heisenbergfolner}, we have
\[
d_\Phi{\left(H_3(\Z) \setminus \bigcup_{n \in \N} b_n\Z^3\right)} = \prod_{n \in \N} {\left(1-\frac{1}{b_n^3}\right)}.
\]
\end{theorem}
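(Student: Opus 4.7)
The plan is to sandwich $d_\Phi(\mathcal{F})$, where $\mathcal{F} = H_3(\Z) \setminus \bigcup_n b_n\Z^3$, between easily computable truncations. For each $N \in \N$, let $\mathcal{F}_N = H_3(\Z) \setminus \bigcup_{n=1}^N b_n\Z^3$. This set is $B_N$-periodic in each coordinate with $B_N = b_1 \cdots b_N$; pairwise coprimality of the $b_n$ together with the Chinese remainder theorem (Lemma~\ref{chinese}, applied to $H_3(\Z)/B_N\Z^3 \cong \prod_{n=1}^N H_3(\Z)/b_n\Z^3$) identifies $\mathcal{F}_N$ modulo $B_N\Z^3$ with $\prod_{n=1}^N(H_3(\Z)/b_n\Z^3 \setminus \{0\})$, which has $\prod_{n=1}^N(b_n^3 - 1)$ elements. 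Since $\Phi$ is a product of three intervals each of unbounded length, $d_\Phi(\mathcal{F}_N)$ exists and equals the per-period fraction $\prod_{n=1}^N(1 - 1/b_n^3)$.

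The inclusion $\mathcal{F} \subseteq \mathcal{F}_N$ yields $\overline d_\Phi(\mathcal{F}) \leq d_\Phi(\mathcal{F}_N)$, so letting $N \to \infty$ gives $\overline d_\Phi(\mathcal{F}) \leq \prod_n(1 - 1/b_n^3)$; the infinite product converges because pairwise coprimality forces the $b_n$ to have distinct smallest prime factors, so $\sum 1/b_n^3 \leq \sum_{p \text{ prime}} 1/p^3 < \infty$. For the opposite inequality, $\mathcal{F}_N \setminus \mathcal{F} \subseteq \bigcup_{n > N} b_n\Z^3$ gives
\[
\underline d_\Phi(\mathcal{F}) \geq d_\Phi(\mathcal{F}_N) - \overline d_\Phi\Bigl(\bigcup_{n > N} b_n\Z^3\Bigr),
\]
so the whole argument reduces to showing that this tail upper density tends to $0$ as $N \to \infty$.

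The main obstacle is this last tail estimate, since naive countable subadditivity of $\overline d_\Phi$ fails here: the origin lies in every $b_n\Z^3$ and contributes infinitely many times to $\sum_n |b_n\Z^3 \cap \Phi_m|$. I would sidestep this by isolating the origin (which alone has $\Phi$-density zero) and bound $\sum_{n > N} |(b_n\Z^3 \setminus \{0\}) \cap \Phi_m|/|\Phi_m|$ by splitting the range of $n$ into three regimes according to the size of $b_n$ relative to $m$. For $b_n \leq m$, each term is $O(m^4/b_n^3)$, and the total contributes $O(\sum_{n > N} 1/b_n^3)$ after dividing by $|\Phi_m| \asymp m^4$; for $m < b_n \leq m^2$, only the quadratic-length coordinate of $\Phi_m$ can contain nonzero multiples of $b_n$, each term is $O(m^2/b_n)$, and distinctness of the $b_n$ as positive integers yields $\sum_{b_n \leq m^2} 1/b_n = O(\log m)$, contributing $O((\log m)/m^2) \to 0$; for $b_n > m^2$, no nonzero multiples of $b_n$ lie in $\Phi_m$. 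Letting first $m \to \infty$ and then $N \to \infty$ gives the desired vanishing, completing the proof.
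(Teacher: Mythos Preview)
Your proof is correct and takes a genuinely different route from the paper's. The paper observes that density depends only on the underlying set $\Z^3$ and the F\o lner sequence $\Phi$ (not on the Heisenberg multiplication), then invokes its $\Phi$-I.E.\ good machinery: Lemma~\ref{highdimremovability} lifts I.E.\ goodness from the one-dimensional families $(b_n\Z)$ to the product family $(b_n\Z^3)$, after which Lemma~\ref{nicely-removable} reads off the product formula. Your argument instead sandwiches $\mathcal{F}$ between the finite truncations $\mathcal{F}_N$ and controls the tail $\bigcup_{n>N} b_n\Z^3$ by an explicit three-regime count tailored to the box shape of $\Phi$. This is more elementary and self-contained; notably, your tail estimate works without assuming $\sum 1/b_n < \infty$, whereas the paper's route through Lemma~\ref{highdimremovability} needs each one-dimensional factor $(b_n\Z)$ to itself be I.E.\ good and hence implicitly uses that summability hypothesis. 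The price is that your regime-splitting is bespoke to this particular $\Phi$ and would need rework for other F\o lner sequences, while the paper's framework is designed to be reused across the examples in Section~\ref{number-theory}. One small point: your distinctness arguments (for $\sum 1/b_n^3 < \infty$ and for $\sum_{b_n \leq m^2} 1/b_n = O(\log m)$) tacitly assume all $b_n \geq 2$; if some $b_n = 1$ the statement is trivial since both sides vanish, so this is harmless but worth a sentence.
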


\begin{proof}
We may again apply Lemma \ref{nicely-removable}, Theorem \ref{bfree-density}, and Lemma \ref{highdimremovability} to deduce that $\Z^3\setminus\bigcup b_n\Z^3$ is $\Phi$-I.E.\ good as a subset of $(\Z^3,+)$ (notice that $\Phi$-I.E.\ goodness is unaffected by passing to a subsequence of $\Phi$---in our case from $(\{-n,\dots,n\})_{n\in\N}$ to $(\{-n^2,\dots,n^2\})_{n\in\N}$).
Since the definition of density does not depend on the group operation, we are done.
\end{proof}

\section{Discordant sets in \texorpdfstring{$\SL_2(\Z)$}{SL(2,Z)}}\label{curious-settings}

Let $G$ denote the free semigroup generated by $a$ and $b$ with identity $1\in G$. It is well known that $G$ is not amenable. To see why, suppose $\Phi$ is a F\o lner sequence for $G$, and form the partition $G=\{1\}\cup aG\cup bG$. One of these parts must have positive upper density with respect to $\Phi$, and since $\{1\}$ does not, suppose without loss of generality that $c=\overline d_\Phi(aG)>0$. Then $aG,baG,b^2aG,\dots$ are disjoint subsets of $G$ each with upper density $c>0$. But Lemma \ref{shift-additivity} guarantees that this is impossible, which is our desired contradiction.
This argument can be modified without too much difficulty to show that nonabelian free groups are not amenable, and moreover any group possessing a nonabelian free subgroup is not amenable.

We now turn our attention to the group $\SL_2(\mathbb{Z})$. Since $\SL_2(\mathbb{Z})$ contains nonabelian free subgroups,\footnote
{For example, it is well-known that the matrices $\smatr{1&2\\0&1}$ and $\smatr{1&0\\2&1}$ generate a free subgroup of $\SL_2(\Z)$ (see \cite{Sa}).
}
it is not amenable. However, we will define a natural density-like notion for subsets of $\SL_2(\Z)$ and construct sets which are large with respect to this notion but not piecewise syndetic.
Although this density does not enjoy shift-invariance (which in the amenable case is assured by F\o lner sequences), we consider this result an interesting aside which has independent value.
In particular, it makes more robust the analogy with nowhere dense sets of positive measure, showing that it extends beyond the amenable case.

We first leverage Theorem \ref{bfreegeneral} to produce an ample supply of non-piecewise syndetic subsets of $\SL_2(\mathbb{Z})$.
Recall the \textit{congruence subgroups} of $\SL_2(\Z)$, which are subgroups of the form
\[
\Gamma(n)={\left\{\matr{a&b\\c&d}\in\SL_2(\Z)\colon\matr{a&b\\c&d}\equiv\matr{1&0\\0&1}\pmod n\right\}}.
\]
The subgroup $\Gamma(n)$ is normal in $\SL_2(\Z)$ for each $n\in\N$ since it is the kernel of the homomorphism $\phi_n\colon\SL_2(\Z)\to\SL_2(\Z/n\Z)$ defined by reducing each matrix entrywise modulo $n$.

\begin{theorem}\label{sl2z}
Let $(b_n)$ be a sequence of pairwise coprime positive integers, and define $A=\SL_2(\Z)\setminus\bigcup\Gamma(b_n)$. Then $A$ is not piecewise syndetic.
\end{theorem}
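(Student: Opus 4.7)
My plan is to adapt Proof 1 of Theorem \ref{bfreegeneral} directly. A subtlety is that Theorem \ref{bfreegeneral} cannot be invoked as a black box here: its hypothesis, via Lemma \ref{chinese}, requires the indices $[\SL_2(\Z):\Gamma(b_n)] = |\SL_2(\Z/b_n\Z)|$ to be pairwise coprime, but this already fails for $b_1 = 2$ and $b_2 = 3$, where the indices are $6$ and $24$. The key point is that the coprime-index assumption is only used in Lemma \ref{chinese} to establish the surjectivity of the diagonal map $G/N\to\prod G/N_i$; in our setting this surjectivity holds for independent structural reasons, so Proof 1 goes through essentially verbatim.

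First I would verify the relevant Chinese-remainder-type isomorphism. For pairwise coprime $b_1,\dots,b_k$, the classical ring CRT gives $\Z/(b_1\cdots b_k)\Z\cong\prod_{i=1}^k\Z/b_i\Z$, and since $\SL_2$ is a functor preserving finite products of commutative rings, we obtain $\SL_2(\Z/(b_1\cdots b_k)\Z)\cong\prod_{i=1}^k\SL_2(\Z/b_i\Z)$. Composing with the well-known surjection $\phi_{b_1\cdots b_k}\colon\SL_2(\Z)\to\SL_2(\Z/(b_1\cdots b_k)\Z)$ produces a surjective homomorphism
\[
\psi\colon\SL_2(\Z)\twoheadrightarrow\prod_{i=1}^k\SL_2(\Z)/\Gamma(b_i),
\quad
\ker\psi=\bigcap_{i=1}^k\Gamma(b_i)=\Gamma(b_1\cdots b_k).
\]

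Next I would apply Lemma \ref{characterization}. Given a finite $F=\{f_1,\dots,f_k\}\subseteq\SL_2(\Z)$ (padding $F$ if necessary so that its size is at least the number of subgroups we need to dodge), use the surjectivity of $\psi$ to produce some $x\in\SL_2(\Z)$ with $x\equiv f_i^{-1}\pmod{b_i}$ for each $i\in\{1,\dots,k\}$, so that $f_ix\in\Gamma(b_i)$. Set $S=x\Gamma(b_1\cdots b_k)$; this is a coset of a finite-index subgroup, hence syndetic. Since $\Gamma(b_1\cdots b_k)\subseteq\Gamma(b_i)$ for every $i$, we get $f_iS=f_ix\Gamma(b_1\cdots b_k)\subseteq\Gamma(b_i)\subseteq A^{\mathrm c}$, whence $FS\cap A=\varnothing$. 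Lemma \ref{characterization} then concludes that $A$ is not piecewise syndetic.

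The main obstacle is conceptual rather than technical: one must recognize that Lemma \ref{chinese} is being invoked only as a tool to guarantee the surjectivity of a diagonal reduction map, and that in the $\SL_2(\Z)$ setting this surjectivity is available through ring-theoretic CRT plus the standard surjectivity of each $\phi_n$, even though the group-theoretic indices are very far from coprime. Once this is noticed, the rest is a direct transcription of Proof 1 of Theorem \ref{bfreegeneral} with $g_n=1$.
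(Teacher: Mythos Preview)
Your proof is correct and follows the same overall strategy as the paper: both establish surjectivity of the diagonal reduction map $\SL_2(\Z)\to\prod_{i=1}^k\SL_2(\Z/b_i\Z)$ and then run Proof~1 of Theorem~\ref{bfreegeneral} via Lemma~\ref{characterization}. The difference lies only in how this surjectivity is verified. You obtain it cleanly from three standard ingredients: the ring-theoretic CRT $\Z/(b_1\cdots b_k)\Z\cong\prod_i\Z/b_i\Z$, the functorial identity $\SL_2(\prod_i R_i)\cong\prod_i\SL_2(R_i)$, and the well-known surjectivity of $\phi_N\colon\SL_2(\Z)\to\SL_2(\Z/N\Z)$. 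The paper instead reduces to showing $\Gamma(m)\Gamma(n)=\SL_2(\Z)$ for coprime $m,n$ and proves this by an explicit construction: it takes an entrywise CRT lift $S'$ of a given $T\in\SL_2(\Z/m\Z)$ satisfying $S'\equiv I\pmod n$, observes $\det S'\equiv 1\pmod{mn}$, and then hand-modifies the entries modulo $mn$ to force the determinant to equal~$1$. Your route is more conceptual and avoids the matrix bookkeeping; the paper's is fully self-contained and does not rely on the surjectivity of $\phi_N$ as a cited fact. Your observation that Theorem~\ref{bfreegeneral} cannot be invoked as a black box (since the indices $|\SL_2(\Z/b_n\Z)|$ are not pairwise coprime) is also a point the paper leaves implicit.
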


\begin{proof}
Fix any coprime $m,n\in\N$. Since $\Gamma(m)\cap\Gamma(n)=\Gamma(mn)$, to apply Theorem \ref{bfreegeneral} it suffices to show that $\Gamma(m)\Gamma(n)=\SL_2(\Z)$. We claim that we may always find, for any $T\in\SL_2(\Z/m\Z)$, some $S\in\SL_2(\Z)$ such that $\phi_m(S)=T$ and $\phi_n(S)=I$.
Granted this, fix $S\in\SL_2(\Z)$, and find $S_1,S_2\in\SL_2(\Z)$ such that $\phi_m(S_1)=\phi_m(S)$, $\phi_n(S_1)=I$, $\phi_m(S_2)=I$, and $\phi_n(S_2)=\phi_n(S)$.
Then $SS_1^{-1}S_2^{-1}\in\ker(\phi_m)\cap\ker(\phi_n)=\Gamma(mn)$, so there exists $T\in\Gamma(mn)$ such that $S=(TS_2)S_1\in\Gamma(m)\Gamma(n)$.
Hence $\Gamma(m)\Gamma(n)=\SL_2(\Z)$, as desired.

To prove the claim, let $T=\smatr{a&b\\c&d}$, treating the entries as elements of $\Z$. Find $x,y\in\Z$ such that $mx+ny=1$ and set
\[
S'=\matr{a'&b'\\c'&d'}=\matr{mx+any&bny\\cny&mx+dny},
\]
so $S'\equiv T\pmod m$ and $S'\equiv I\pmod n$.
Note that $\det(S')\equiv1\pmod {mn}$.
It follows that $\gcd(a',b',mn)=1$.
Letting $q$ denote the product of primes dividing $a'$ but not $b'$, we see that $a'$ and $b'+mnq$ are coprime.
Therefore, there exist $u,v\in\Z$ such that $a'u+(b'+mnq)v=1$.
It also follows from $\det(S')\equiv1 \pmod{mn}$ that there exists $s\in\Z$ such that
\[
\det\matr{a'&b'+mnq\\c'&d'}=mns+1.
\]
Then
\[
S=\matr{a'&b'+mnq\\c'+mnsv&d'-mnsu}
\]
is an element of $\SL_2(\Z)$ satisfying $\phi_m(S)=T$ and $\phi_n(S)=I$.
\end{proof}

We will now introduce a notion of density for subsets of $\SL_2(\mathbb{Z})$. Define \[F_n={\left\{\begin{pmatrix}a & b \\ c & d\end{pmatrix}\in\SL_2(\Z)\colon|a|,|b|,|c|,|d|\leq n\right\}}.\] We then define the \emph{upper density} of $A\subseteq \SL_2(\mathbb{Z})$ to be \[\overline{d}(A)=\limsup_{n\to\infty}\frac{|A\cap F_n|}{|F_n|},\]
and define $\underline{d}$ and $d$ analogously.
We first make some crude estimates on the growth of the sets $F_n$ and $\Gamma(k) \cap F_n$.

\begin{lemma}\label{estim1}
There is a positive integer $N$ such that $|F_n|\geq(12/\pi^2)n^2$ for all $n\geq N$. 
\end{lemma}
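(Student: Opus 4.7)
The plan is to produce explicitly, for each coprime pair $(a,c) \in \Z^2$ with $1 \le |a|, |c| \le n$ and $|a| \ne |c|$, at least one matrix in $F_n$ whose first column is $(a,c)^{\mathrm T}$. Since distinct coprime pairs yield distinct first columns, counting the pairs then bounds $|F_n|$ from below; the classical asymptotic for the summatory Euler function finishes the job.

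For the construction, assume without loss of generality that $|a| > |c| \ge 1$. By Bezout, pick $(b_0, d_0) \in \Z^2$ with $ad_0 - b_0 c = 1$; all integer solutions have the form $(b_0 + ka, d_0 + kc)$ for $k \in \Z$. Choosing $k$ to be the integer nearest $-b_0/a$ gives $b := b_0 + ka$ with $|b| \le |a|/2$. The determinant equation then forces $d = (1 + bc)/a$, so
\[
|d| \le \frac{1 + |b||c|}{|a|} \le \frac{1 + (|a|/2)(|a|-1)}{|a|} < \frac{|a|}{2} + 1 \le n
\]
for $n \ge 2$ and $|a| \le n$. Hence $\smatr{a&b\\c&d} \in F_n$. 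The case $|c| > |a|$ is symmetric, choosing $k$ instead to make $|d| \le |c|/2$ and using $b = (ad - 1)/c$ to bound $|b|$.

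For the counting, for each $m \in \{2, \ldots, n\}$ the coprime pairs $(a,c)$ with $\max(|a|, |c|) = m$ and $\min(|a|, |c|) < m$ number exactly $8\phi(m)$: there are $\phi(m)$ choices for the smaller magnitude (elements of $\{1, \dots, m-1\}$ coprime to $m$), times $2$ (which of $|a|, |c|$ equals $m$), times $4$ (the four sign combinations). Thus
\[
|F_n| \ge \sum_{m=2}^n 8\phi(m).
\]
Invoking the classical estimate $\sum_{m \le n} \phi(m) = \frac{3}{\pi^2}n^2 + O(n \log n)$ (e.g.\ Hardy--Wright), we get $\sum_{m=2}^n 8\phi(m) = \frac{24}{\pi^2}n^2 + O(n \log n)$, which exceeds $\frac{12}{\pi^2}n^2$ for all sufficiently large $n$.

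There is no serious obstacle here: the only insight required is the observation in Step 1 that, because of the determinant relation, reducing $b$ modulo $a$ (when $|a|$ is the larger entry) automatically produces a small $|d|$ as well. The generous factor of $2$ between the asymptotic count $(24/\pi^2)n^2$ and the target $(12/\pi^2)n^2$ is what makes the proof comfortably absorb the error term in the asymptotic for $\sum \phi(m)$.
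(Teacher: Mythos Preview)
Your proof is correct and follows essentially the same approach as the paper: associate to each coprime pair of integers (with absolute values at most $n$) an explicit matrix in $F_n$ via B\'ezout and a reduction step, then invoke the classical asymptotic for the count of coprime pairs. The paper uses the top row $(a,b)$ rather than the first column and appeals directly to the density $6/\pi^2$ rather than summing $8\phi(m)$, but these are cosmetic differences; your construction step is in fact more carefully justified than the paper's one-line appeal to the Euclidean algorithm.
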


\begin{proof}
We will produce, for sufficiently large $n$, at least $(12/\pi^2)n^2$ matrices $\smatr{a & b \\ c & d} \in \SL_2(\mathbb{Z})$ such that $\max\{|a|,|b|,|c|,|d|\}\in\{|a|,|b|\}$ and $|a|,|b|,|c|,|d|\leq n$. As we saw in Example \ref{coprimepairs}, \[\lim_{n\to\infty}\frac{|\{a,b\in\mathbb{Z}\colon\text{$|a|,|b|\leq n$ and $\gcd(a,b)=1$}\}|}{(2n+1)^2}=\frac{6}{\pi^2}.\] Thus there is a positive integer $N$ such that for all $n\geq N$, there are at least $(3/\pi^2)(2n+1)^2>(12/\pi^2)n^2$ pairs $(a,b)$ such that $\max\{|a|,|b|\}\leq n$ and $\gcd(a,b)=1$.
Given such a pair $(a,b)$, using the Euclidean algorithm, we can find $c,d \in \Z$ with $|c|\leq |a|$ and $|d|\leq |b|$ such that $ad-bc=1$.
Then $\smatr{a & b \\ c & d}\in F_n$, as desired. 
\end{proof}

\begin{lemma}\label{estim2}
For each $k\geq 2$ and $n\geq k$, we have $|\Gamma(k)\cap F_n|\leq (96/k^2)n^2$.\end{lemma}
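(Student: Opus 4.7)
My plan is to parametrize elements of $\Gamma(k)\cap F_n$ by their first row $(a,b)$ and, for each valid row, to bound the number of second rows $(c,d)$ that complete the matrix. The congruence conditions defining $\Gamma(k)$, combined with the $\SL_2$-relation, force the admissible $(c,d)$ to lie on a sparse sublattice, which yields the desired quadratic bound in $n/k$.

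For Step 1, I would fix a coprime pair $(a,b)$ with $a\equiv 1\pmod k$, $b\equiv 0\pmod k$, and $|a|,|b|\leq n$, and show that the number of completions $(c,d)\in\Z^2$ satisfying $ad-bc=1$, $c\equiv 0$ and $d\equiv 1\pmod k$, and $|c|,|d|\leq n$ is at most $2n/(\max(|a|,|b|)\cdot k)+1$. The general $\Z$-solution to $ad-bc=1$ is $(c_0,d_0)+t(a,b)$ for $t\in\Z$; imposing $c\equiv 0\pmod k$ restricts $t$ to a single residue class modulo $k$ (the condition $d\equiv 1\pmod k$ is automatic from reducing $ad-bc=1$ modulo $k$). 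Writing $t=t^*+ks$, the admissible completions are $(c_1+aks,d_1+bks)$ for $s\in\Z$, and the constraints $|c|,|d|\leq n$ each confine $s$ to an interval of length at most $2n/(|a|k)$ or $2n/(|b|k)$. The edge case $b=0$ forces $a=d=1$ and fits the same formula under the convention $\max(1,0)=1$.

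For Step 2, I would sum the per-row bound over valid first rows, grouping by $M=\max(|a|,|b|)$. Since $a\neq 0$ (using $a\equiv 1\pmod k$ with $k\geq 2$) and $b=0$ or $|b|\geq k$, the only values of $M$ that appear are $M=1$ (from the unique row $(1,0)$) and $M\geq k$. A case analysis on whether the maximum is attained by $|a|$ or $|b|$, combined with the residue restrictions, shows that at most $8M/k+4$ valid rows occur at each level $M\geq k$. This gives
\[
\sum_{(a,b)}\frac{1}{\max(|a|,|b|)}\leq 1+\sum_{M=k}^{n}\left(\frac{8}{k}+\frac{4}{M}\right).
\]
The crucial move is to bound the tail $\sum_{M=k}^n 1/M$ by the trivial $n/k$ (using $1/M\leq 1/k$ termwise), sidestepping the logarithmic factor one would get from the usual harmonic-sum estimate.

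Finally, combining this with the crude bound $|R|\leq(2n/k+1)^2=O(n^2/k^2)$ on the number of valid rows yields
\[
|\Gamma(k)\cap F_n|\leq\frac{2n}{k}\cdot O(n/k)+O(n^2/k^2)=O(n^2/k^2),
\]
and tracking the constants with modest care gives a bound well below $96n^2/k^2$. The main obstacle is the bookkeeping in Step 2, namely organizing the layered sum in a way that avoids a spurious logarithm and correctly incorporating the $b=0$ edge case; the geometric estimate in Step 1 is essentially mechanical once the lattice parametrization of the completions is identified.
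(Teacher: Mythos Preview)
Your approach is correct and in fact yields a sharper constant than the paper's, but it takes a genuinely different route. The paper's key move is a symmetry reduction: by swapping rows and columns one reduces (up to a factor of $2$) to matrices in which $\max\{|a|,|b|,|c|,|d|\}$ is attained in the first row. Under that normalization the lattice parametrization $(c,d)=(c_0+ma,d_0+mb)$, together with $\max\{|c|,|d|\}\leq\max\{|a|,|b|\}$, gives at most $3$ completions per row \emph{regardless of $k$}; the congruence conditions on $c$ and $d$ are never used, and the count is simply $2\cdot 3\cdot(4n/k)^2=96\,n^2/k^2$. You forego this symmetry step and instead exploit the congruence $c\equiv 0\pmod k$ to thin the completion lattice by an extra factor of $k$; this forces you into the layered sum over $M=\max(|a|,|b|)$ and the harmonic-avoidance maneuver $\sum_{M=k}^n 1/M\leq n/k$, but the payoff is a smaller constant (your bookkeeping gives roughly $37\,n^2/k^2$). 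One minor nit: for $k=2$ the row $(-1,0)$ is also admissible at level $M=1$, so ``the unique row $(1,0)$'' should read ``at most two rows''; this does not affect the final inequality.
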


\begin{proof}
Define \[\Lambda(k)={\left\{\begin{pmatrix}a & b \\ c & d \end{pmatrix}\in \Gamma(k)\colon\max\{|a|,|b|,|c|,|d|\}\in \{|a|,|b|\}\right\}}.\]
Given $M\in \Gamma(k)\cap F_n$, either $M$ or the matrix given by swapping the rows  and columns of $M$ is in $\Lambda(k)\cap F_n$. Thus $|\Gamma(k)\cap F_n|\leq 2\cdot|\Lambda(k)\cap F_n|$.

There are at most $\lceil(2n+1)/k\rceil^2 \leq ((2n+1+k)/k)^2 \leq (4n/k)^2$ pairs $(a,b) \in \Z^2$ with $|a|,|b|\leq n$ such that $a\equiv1\pmod k$ and $b\equiv0\pmod k$. In particular, there are at most $(4n/k)^2$ such pairs $(a, b)$ with $\gcd(a,b)=1$.
Given such a pair, one can find, using the Euclidean algorithm, $c,d \in \Z$ such that $ad-bc=1$, $|d|\leq |b|$, $|c|\leq |a|$.
Then
\[\{(x,y)\in \mathbb{Z}^2\colon ax-by=1\}=\{(d+mb,c+ma)\colon m\in\mathbb{Z}\},\]
so there are at most three pairs $(x,y)$ such that $ax-by=1$ and $\max\{|x|,|y|\}\leq \max\{|a|,|b|\}$, and in particular there are at most three $(x,y)$ such that $\smatr{a & b \\ y & x}\in \Lambda(k)\cap F_n$.

Combining all of this, we have $|\Gamma(k)\cap F_n|\leq 6(4n/k)^2 = (96/k^2)n^2$. 
\end{proof}

With these estimates, we can show that under certain conditions, the sets in Theorem \ref{sl2z} have positive lower density.

\begin{theorem} Let $(b_n)$ be a sequence of pairwise coprime positive integers such that $\sum 8\pi^2/b_n^2<1$. Then $A=\SL_2(\mathbb{Z})\setminus \bigcup\Gamma(b_n)$ has positive lower density.
\end{theorem}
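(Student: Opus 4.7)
The plan is to establish the bound $\overline{d}(A^{\mathrm{c}}) \leq \sum_k 8\pi^2/b_k^2$, from which $\underline{d}(A) = 1 - \overline{d}(A^{\mathrm{c}}) > 0$ follows immediately, since the hypothesis says the sum is strictly less than $1$. Note that the identity $\underline{d}(A) = 1 - \overline{d}(A^{\mathrm{c}})$ is just a reformulation of $|A \cap F_n| + |A^{\mathrm{c}} \cap F_n| = |F_n|$.

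Since $A^{\mathrm{c}} = \bigcup_k \Gamma(b_k)$, the union bound yields $|A^{\mathrm{c}} \cap F_n| \leq \sum_k |\Gamma(b_k) \cap F_n|$. I would then combine the two estimates: for any $n \geq N$ and any $k$ with $b_k \leq n$, Lemmas \ref{estim1} and \ref{estim2} give
\[
\frac{|\Gamma(b_k) \cap F_n|}{|F_n|} \leq \frac{(96/b_k^2) n^2}{(12/\pi^2) n^2} = \frac{8\pi^2}{b_k^2}.
\]
Fixing $K \in \N$ and then taking $n \geq \max(N, b_1, \dots, b_K)$, the partial sum over $k \leq K$ is bounded by $\sum_{k=1}^K 8\pi^2/b_k^2$; the contribution from $k > K$ with $b_k \leq n$ is in turn bounded by $\sum_{k > K} 8\pi^2/b_k^2$, which vanishes as $K \to \infty$. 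Sending first $n \to \infty$ and then $K \to \infty$ would then deliver the desired bound on $\overline{d}(A^{\mathrm{c}})$.

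The main obstacle---modest but real---is handling the residual contribution from indices $k$ with $b_k > n$, where Lemma \ref{estim2} has no content. For this I would verify the easy geometric fact that if $b_k \geq n+2$, then any $M = \smatr{a&b\\c&d} \in \Gamma(b_k) \cap F_n$ has $b_k$ dividing each of $a-1, b, c, d-1$, where each of these integers has absolute value at most $n+1 < b_k$, forcing $M = I$. The borderline cases $b_k \in \{n, n+1\}$ occur for at most two indices $k$ by distinctness of the $b_k$, and each contributes only a bounded number of matrices (since for $b_k = n+1$ the identity $ad - bc = 1$ eliminates the non-identity option $(a,d) = (1-b_k, 1-b_k)$ whenever $n \geq 2$, and for $b_k = n$ each entry has only finitely many admissible residues). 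This total $O(1)$ contribution, divided by $|F_n| \geq (12/\pi^2) n^2$, vanishes as $n \to \infty$ and so does not affect the final $\limsup$ estimate.
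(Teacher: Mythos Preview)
Your approach is the paper's, but there is a slip in the bookkeeping. The union bound as you state it, $|A^{\mathrm c}\cap F_n|\le\sum_k|\Gamma(b_k)\cap F_n|$, is useless: since $I\in\Gamma(b_k)$ for every $k$, each summand is at least $1$ and the right-hand side is $+\infty$. You sense this when you call the residual contribution from the indices with $b_k>n$ an ``$O(1)$'' --- but that $O(1)$ is the cardinality of the \emph{union} $\bigcup_{b_k>n}(\Gamma(b_k)\cap F_n)$, not of the \emph{sum} $\sum_{b_k>n}|\Gamma(b_k)\cap F_n|$, so the argument as sequenced does not close.

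The paper's remedy is cleaner than your borderline-case analysis and your $K$-truncation: set $\Gamma'(b_m)=\Gamma(b_m)\setminus\{I\}$ for $m\ge2$ and $\Gamma'(b_1)=\Gamma(b_1)$. Then $\bigcup_m\Gamma(b_m)=\bigcup_m\Gamma'(b_m)$, and for $2\le n<b_m$ one checks directly (exactly your geometric observation) that $\Gamma'(b_m)\cap F_n=\varnothing$. Now the union bound gives the \emph{finite} sum $|A^{\mathrm c}\cap F_n|\le\sum_{b_m\le n}|\Gamma'(b_m)\cap F_n|$, to which Lemmas~\ref{estim1} and~\ref{estim2} apply termwise to yield $\underline d(A)\ge 1-\sum_m 8\pi^2/b_m^2$ immediately, with no limiting argument in $K$ and no separate treatment of $b_k\in\{n,n+1\}$.
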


\begin{proof} For $2\leq n< b_m$, the only element of $\Gamma(b_m)\cap F_n$ is the identity matrix $I$. For $m\geq 2$, write $\Gamma'(b_m)=\Gamma(b_m)\setminus \{I\}$ and $\Gamma'(b_1)=\Gamma(b_1)$, so that $\bigcup\Gamma(b_m)=\bigcup\Gamma'(b_m)$.
Then, for $2\leq n< b_m$, we have $\Gamma'(b_m)\cap F_n=\varnothing$, so \[A\cap F_n=F_n\setminus\bigcup_{b_m\leq n}(\Gamma'(b_m)\cap F_n).\] 
For sufficiently large $n$, we then have, applying Lemmas \ref{estim1} and \ref{estim2}, \[\frac{|A\cap F_n|}{|F_n|}\geq
1-\sum_{b_m\leq  n}\frac{|\Gamma'(b_m)\cap F_n|}{|F_n|} \geq
1-\sum_{b_m\leq n}\frac{(96/b_m^2)n^2}{(12/\pi^2)n^2}
\geq 1-\sum_{m\in\mathbb{N}}\frac{8\pi^2}{b_m^2}.\]
Thus $\underline{d}(A)\geq 1-\sum 8\pi^2/b_n^2>0$, as desired.
\end{proof}

\section{Recurrence and anti-recurrence}\label{recurrence}


Let $G$ be a  countably infinite  group. A set $A\subseteq G$ is a \textit{set of \rmpars{measurable} recurrence} if for all measure preserving systems $(X,\mathcal{B},\mu,G)$ and for all $Y\subseteq X$ with $\mu(Y)>0$, there is a $g\in A$ not equal to the identity such that $\mu(Y\cap gY)>0$. Recall that in a measure-preserving system, we assume $\mu(X)=1$, so sets of recurrence exist (for example, take $A = G$).


Call $A\subseteq G$ an \textit{anti-recurrence set}, or an AR set, if for all $g\in G$, $gA$ is not a set of recurrence. We will begin by showing that any piecewise syndetic set is a shift of a set of recurrence, so any AR set with positive upper density is therefore discordant.

\begin{lemma}
Let $T \subseteq G$ be thick.
Then $T$ is a set of recurrence.
\end{lemma}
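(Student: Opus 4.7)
The plan is to run a Poincaré-style pigeonhole argument on a tuple of group elements chosen so that their pairwise left quotients all land in $T$. Fix a measure-preserving system $(X, \mathcal B, \mu, G)$ and a set $Y \subseteq X$ with $\mu(Y) > 0$, and pick $N \in \N$ with $N\mu(Y) > 1$. The goal would be to produce pairwise distinct $g_1, \ldots, g_N \in G$ satisfying $g_i^{-1} g_j \in T$ for every $1 \leq i < j \leq N$. Granted this, the sets $g_1 Y, \ldots, g_N Y$ each have measure $\mu(Y)$ and total measure exceeding $\mu(X) = 1$, so two of them must intersect with positive measure: $\mu(g_i Y \cap g_j Y) > 0$ for some $i < j$. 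Pulling back by the measure-preserving bijection $x \mapsto g_i^{-1} x$ gives $\mu(Y \cap g_i^{-1} g_j Y) > 0$, and $g_i^{-1} g_j$ lies in $T$ by construction and is non-identity since $g_i \neq g_j$.

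The tuple would be built inductively. Take $g_1 \in G$ arbitrary. Given pairwise distinct $g_1, \ldots, g_k$ with $g_i^{-1} g_j \in T$ for every $i < j \leq k$, apply the thickness of $T$ to the finite set $F = \{g_1^{-1}, \ldots, g_k^{-1}\}$ to obtain $h \in G$ with $Fh \subseteq T$. This yields $g_i^{-1} h \in T$ for every $i \leq k$, so $g_{k+1} := h$ adjoins the needed new quotients, provided $h$ is distinct from all previous $g_i$.

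The one nontrivial point is securing this distinctness, which I would dispatch via a standalone observation: for any finite $F \subseteq G$, the set $H := \{h \in G \colon Fh \subseteq T\}$ is infinite. Indeed, were $H$ finite, we could select $g_0 \in G \setminus HH^{-1}$ — which exists since $G$ is infinite while $HH^{-1}$ is finite — and applying thickness to the finite set $F \cup Fg_0$ would yield some $h' \in G$ with $h' \in H$ and $g_0 h' \in H$, forcing $g_0 \in HH^{-1}$, a contradiction. Thus at every stage infinitely many choices of $h$ are available, so the inductive construction can always produce a $g_{k+1}$ distinct from the preceding entries. The remaining steps — the pigeonhole overlap and the change of variable — are routine.
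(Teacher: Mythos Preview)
Your proof is correct and follows essentially the same approach as the paper's: build group elements whose pairwise left quotients lie in $T$, then apply a pigeonhole argument to the translates $g_iY$. You are in fact more careful than the paper in securing the distinctness of the $g_i$ (so that $g_i^{-1}g_j$ is non-identity), a point the paper's proof glosses over.
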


\begin{proof}
Let $T\subseteq G$ be thick. First, we will show that there is an infinite sequence $(g_n)$ of elements of $G$ such that for all pairs $i,j\in\N$ with $i<j$, we have $g_i^{-1}g_j\in T$. Indeed, such a set can be constructed recursively: given $g_1,\dots,g_n$, we can find $g_{n+1}$ such that $\{g_1^{-1},g_2^{-1},\dots,g_n^{-1}\}g_{n+1}\subseteq T$. 

Let $(X,\mathcal{B},\mu,G)$ be a measure preserving system, and let $Y\subseteq X$ have $\mu(Y)>0$. There exist $g_i,g_j$ with $i<j$ such that $\mu(g_iY\cap g_jY)>0$. Then, $\mu(Y\cap g_i^{-1}g_j Y)>0$, as desired.
\end{proof}

\begin{lemma}
Let $A\subseteq G$ be a set of recurrence partitioned as $A=\bigcup_{i=1}^r A_i$.
Then at least one $A_i$ is a set of recurrence.
\end{lemma}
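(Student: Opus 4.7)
The plan is to proceed by contradiction, using the standard product-of-counterexamples trick. Suppose none of the pieces $A_1, \dots, A_r$ is a set of recurrence. Then for each $i \in \{1, \dots, r\}$ we may select a measure-preserving system $(X_i, \mathcal{B}_i, \mu_i, G)$ and a set $Y_i \subseteq X_i$ with $\mu_i(Y_i) > 0$ such that $\mu_i(Y_i \cap gY_i) = 0$ for every non-identity $g \in A_i$. The goal is to manufacture, from this finite collection of witnesses, a single measure-preserving system witnessing that $A$ itself is not a set of recurrence.

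The natural candidate is the product system $(X, \mathcal{B}, \mu, G)$ where $X = \prod_{i=1}^r X_i$, $\mu = \mu_1 \otimes \cdots \otimes \mu_r$, and $G$ acts diagonally. Because each factor action is measure-preserving and the product is finite, this is a well-defined measure-preserving system. Setting $Y = \prod_{i=1}^r Y_i$, we have $\mu(Y) = \prod_i \mu_i(Y_i) > 0$. Now let $g \in A$ be any non-identity element; then $g \in A_i$ for at least one $i$, and for that index $\mu_i(Y_i \cap gY_i) = 0$. Since $Y \cap gY = \prod_i (Y_i \cap gY_i)$ and $\mu$ factors as a product, this forces $\mu(Y \cap gY) = 0$. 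Hence $A$ fails to be a set of recurrence, contradicting the hypothesis.

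The proof is really just the contrapositive combined with taking a finite product of witnesses, so there is no essential obstacle; the only thing to be slightly careful about is the handling of the identity element (ensuring that the $g$ we pick in each $A_i$ is non-identity, matching the definition). Since the partition is finite, no subtleties about infinite products of probability spaces or ergodic decompositions arise, and the argument is essentially ``soft''. This is in the same spirit as the standard proof that the family of sets of recurrence is partition regular in $\Z$, adapted verbatim to the semigroup setting used here.
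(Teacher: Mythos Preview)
Your proof is correct and follows essentially the same approach as the paper: assume no $A_i$ is a set of recurrence, take the product of the witnessing systems with the diagonal $G$-action, and use the product set $Y=\prod_i Y_i$ to witness that $A$ is not a set of recurrence. Your handling of the identity element is in fact slightly more careful than the paper's, which glosses over this point.
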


\begin{proof}
Assume that no $A_i$ is a set of recurrence. Then, for each $i$, there is a measure preserving system $(X_i,\mathcal{B}_i,\mu_i,G)$ and $Y_i\subseteq X_i$ with $\mu_i(Y_i)>0$ such that for all $g\in A_i$ we have $\mu_i(Y_i\cap gY_i)=0$. The actions of $G$ on each $X_i$ induce an action on $X=X_1\times\dots\times X_n$, which is measure preserving with respect to the product measure $\mu$ of the measures $\mu_i$. Let $Y=Y_1\times\dots\times Y_n$. We have $\mu(Y_1\times\dots\times Y_n)>0$, but for all $g\in A$, since $g\in A_i$ for some $i$, we have $\mu_i(Y_i\cap gY_i)=0$, and thus $\mu(Y\cap gY)=0$. Therefore, $A$ is not a set of recurrence.
This establishes the contrapositive.
\end{proof}

\begin{theorem} If $A$ is AR, then $A$ is not piecewise syndetic.
\end{theorem}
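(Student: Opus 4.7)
My plan is to prove the contrapositive: if $A$ is piecewise syndetic, then $A$ is not AR. Suppose $A$ is piecewise syndetic. By definition, there is a finite set $H\subseteq G$ such that $H^{-1}A=\bigcup_{h\in H}h^{-1}A$ is thick. By the first lemma of this section, $H^{-1}A$ is therefore a set of recurrence.

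Next, I invoke partition regularity. The second lemma is stated for partitions, but the union $\bigcup_{h\in H}h^{-1}A$ can be refined to a disjoint partition in the standard way, and since any superset of a set of recurrence is again a set of recurrence (the witnessing $g$ stays inside the superset), partition regularity applied to the union yields some $h\in H$ for which $h^{-1}A$ is a set of recurrence. Since $G$ is a group, letting $g=h^{-1}\in G$, the paper's conventions give $gA=h^{-1}A$, so $gA$ is a set of recurrence. This contradicts the assumption that $A$ is AR, completing the proof.

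There is no real obstacle here: the theorem follows almost immediately by stringing together the two preceding lemmas (thick $\Rightarrow$ recurrence, and partition regularity of recurrence) with the structural characterization of piecewise syndeticity as a finite union of shifts being thick. The only mild subtlety is the shift from unions to disjoint partitions and back, which is handled by upward closure of the class of sets of recurrence.
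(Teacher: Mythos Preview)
Your proof is correct and follows essentially the same route as the paper. The paper's proof simply invokes Lemma~\ref{PS-friendly} with $\mathcal{A}=\{gA:A\text{ is a set of recurrence}\}$, and your argument is precisely the proof of that lemma unpacked and specialized to this family: use thickness of $H^{-1}A$, partition regularity (plus upward closure) to find $h$ with $h^{-1}A$ a set of recurrence, and then rewrite $h^{-1}A$ as a left shift of $A$.
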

\begin{proof} Apply Lemma \ref{PS-friendly} with $\mathcal{A} = \{gA\colon A\text{ is a set of recurrence}\}$, noting the previous two lemmas.
\end{proof}

In \cite{BCRZ}, it is noted that Straus' example of a discordant set is also an AR set. We will follow their terminology and refer to AR sets with positive upper density as \text{Straus sets}. In what follows, we investigate the question of whether a given discordant set is a Straus set. We begin by showing that Example \ref{bfree} provides large class of non-examples.

\begin{theorem}\label{bfreenotstrauss}
Let $\mathscr{B} = (b_n)$ be a sequence of pairwise coprime positive integers such that $\sum1/b_n<\infty$.
Define $\mathcal{F}_\mathscr{B}=\Z\setminus\bigcup b_n\Z$.
Then $\mathcal{F}_\mathscr{B}$ is a discordant set which is not AR.
\end{theorem}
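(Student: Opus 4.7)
The discordance of $\mathcal{F}_\mathscr{B}$ is already established in Example \ref{bfree} and Theorem \ref{bfree-density}, so the heart of the statement is that $\mathcal{F}_\mathscr{B}$ fails to be AR. The plan is to produce a shift of $\mathcal{F}_\mathscr{B}$ which is a set of recurrence by showing that $\mathcal{F}_\mathscr{B} - 1$ contains an IP set; a classical theorem of Furstenberg then implies that $\mathcal{F}_\mathscr{B} - 1$, as a superset of a set of measurable recurrence, is itself such a set, and hence $\mathcal{F}_\mathscr{B}$ is not AR.

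Writing $P_N = b_1 \cdots b_N$, I would construct inductively a sequence $(a_n)$ of positive integers with $1 + \sum_{i \in I} a_i \in \mathcal{F}_\mathscr{B}$ for every finite nonempty $I \subseteq \N$, which is equivalent to $\operatorname{FS}((a_n)) \subseteq \mathcal{F}_\mathscr{B} - 1$. Assume $a_1, \dots, a_n$ have been chosen so that the condition holds for all nonempty $I \subseteq \{1, \dots, n\}$. To produce $a_{n+1}$, pick $N$ large enough that $2^n \sum_{j > N} 1/b_j < 1$ (possible since $\sum 1/b_n < \infty$) and seek $a_{n+1}$ of the form $P_N k$ with $k$ a suitable positive integer, large enough to force $a_{n+1} > a_n$.

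The condition required of $a_{n+1}$ is that for every $J \subseteq \{1, \dots, n\}$ (including $J = \varnothing$, which handles the subset $I = \{n+1\}$) and every $j \in \N$,
\[
a_{n+1} \not\equiv -\left(1 + \sum_{i \in J} a_i\right) \pmod{b_j}.
\]
For $j \leq N$, the divisibility $b_j \mid a_{n+1}$ reduces this to $1 + \sum_{i \in J} a_i \not\equiv 0 \pmod{b_j}$, which holds for $J = \varnothing$ because $b_j \geq 2$ and for nonempty $J$ by the inductive hypothesis. For $j > N$, the pairwise coprimality yields $\gcd(P_N, b_j) = 1$, so $k \mapsto P_N k \pmod{b_j}$ is a bijection and at most $2^n$ residues of $k$ are forbidden modulo $b_j$; a union bound over $j > N$ shows the set of bad $k$'s has upper density at most $2^n \sum_{j > N} 1/b_j < 1$, so infinitely many valid $k$'s remain.

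The core difficulty is reconciling constraints coming from small and large $b_j$ in a single choice of $a_{n+1}$: small $b_j$'s may not have enough residues to avoid all $2^n$ forbidden values, which is exactly why one makes $a_{n+1}$ a multiple of $P_N$ to trivialize those constraints, while the large $b_j$'s are tamed by the density argument, for which the summability $\sum 1/b_n < \infty$ is crucial. Once the IP set $\operatorname{FS}((a_n))$ has been located inside $\mathcal{F}_\mathscr{B} - 1$, the proof is completed by invoking the classical fact that IP sets are sets of measurable recurrence.
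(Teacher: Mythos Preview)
Your proof is correct and follows essentially the same strategy as the paper: exhibit a shift of $\mathcal{F}_\mathscr{B}$ that contains an IP set, and then invoke the classical fact that IP sets are sets of measurable recurrence. The paper's proof simply cites \cite[Theorem 2.8]{BR} for the statement that $\mathcal{F}_\mathscr{B} - b$ contains an IP set for any $b \in \mathcal{F}_\mathscr{B}$, whereas you supply a direct, self-contained construction of such an IP set inside $\mathcal{F}_\mathscr{B} - 1$; your inductive argument (trivializing the small moduli by taking $a_{n+1}$ divisible by $P_N$ and handling the large moduli by a density bound) is exactly the kind of argument underlying the cited result.
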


\begin{proof}
As noted in Example \ref{bfree}, $\mathcal{F}_\mathscr{B}$ is discordant.
Now let $b\in \mathcal{F}_\mathscr{B}$.
By \cite[Theorem 2.8]{BR}, $\mathcal{F}_\mathscr{B}-b$ contains an IP set.
Since IP sets are sets of recurrence, $\mathcal{F}_\mathscr{B}-b$ is also a set of recurrence, and hence $\mathcal{F}_\mathscr{B}$ cannot be an AR set.
\end{proof}


Not all  countably infinite  amenable groups admit Straus sets. For example, in the group of finitely supported even permutations of $\mathbb{N}$, every set with positive upper Banach density is a set of recurrence (in fact, such a set must be IP).
In \cite[Theorem 1.23]{BCRZ}, it is shown that Straus sets exist in a locally compact, second countable, amenable group if and only if the von Neumann kernel of that group is not cocompact.
In such groups, for all $\varepsilon>0$, there is a Straus set $E$ with $d(E)>1-\varepsilon$. Thus, it is natural to ask whether such a group has a Straus set $E$ with $d(E)=c$ for each $c\in(0,1)$. We prove that this is the case for all  countably infinite  abelian groups in Theorem \ref{BetterARall}. First, we need the following lemma, a special case of a lemma appearing in \cite{BCRZ}. 

\begin{lemma}[{cf.\ \cite[Lemma 5.1]{BCRZ}}]\label{fivepointone} Let $G$ be a  countably infinite  amenable group and let $\Phi$ be a F\o lner sequence in $G$. Let $(A_n)$ be a sequence of subsets of $G$ such that $d_{\Phi}(A_n)$ exists for every $n$, $\sum d_{\Phi}(A_n)<\infty$, and $d_{\Phi}(A_1\cup\dots \cup A_n)$ exists for every $n$. Then there are cofinite subsets $A_n'\subseteq A_n$ such that for $C=\bigcup A_n'$, $d_{\Phi}(C)=\lim d_{\Phi}(A_1\cup\dots\cup A_n)$.
\end{lemma}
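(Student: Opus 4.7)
Let $B_n = A_1 \cup \cdots \cup A_n$ and $L = \lim_{n \to \infty} d_\Phi(B_n)$; this limit exists because $(d_\Phi(B_n))$ is nondecreasing and bounded above by $\sum d_\Phi(A_n) < \infty$. The plan is to trim each $A_n$ by removing only finitely many elements, chosen so that each $A_n'$ makes no contribution to F\o lner windows $\Phi_m$ with $m$ below a carefully chosen threshold $k_n \to \infty$. This will prevent the ``tail'' sets $A_n$ with large $n$ from inflating the upper density of $C = \bigcup_n A_n'$ beyond $L$, while keeping its lower density at least $L$.

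Concretely, since each $d_\Phi(A_n)$ exists, I would pick an increasing sequence $k_1 < k_2 < \cdots$ so that $|A_n \cap \Phi_m|/|\Phi_m| \leq d_\Phi(A_n) + 2^{-n}$ whenever $m \geq k_n$, and then set $A_n' = A_n \setminus \bigcup_{m < k_n} \Phi_m$. This $A_n'$ is cofinite in $A_n$ since each $\Phi_m$ is finite, and by construction $A_n' \cap \Phi_m = \varnothing$ whenever $m < k_n$. For the upper bound, fix $N$, let $m \geq k_N$, and set $N(m) = \max\{n \colon k_n \leq m\} \geq N$. Then $C \cap \Phi_m \subseteq B_{N(m)} \cap \Phi_m \subseteq (B_N \cap \Phi_m) \cup \bigcup_{i=N+1}^{N(m)}(A_i \cap \Phi_m)$, and since $m \geq k_i$ for every $i \leq N(m)$, the union bound gives
\[
\frac{|C \cap \Phi_m|}{|\Phi_m|} \leq \frac{|B_N \cap \Phi_m|}{|\Phi_m|} + \sum_{i > N}\bigl(d_\Phi(A_i) + 2^{-i}\bigr).
\]
Taking $\limsup_{m \to \infty}$ and then $N \to \infty$ (using $\sum d_\Phi(A_n) < \infty$ and $d_\Phi(B_N) \to L$) yields $\overline{d}_\Phi(C) \leq L$.

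For the lower bound, I would observe that $\bigcup_{n \leq N} A_n'$ differs from $B_N$ only on the finite set $\bigcup_{n \leq N}(A_n \setminus A_n')$, so $|C \cap \Phi_m|/|\Phi_m| \geq |B_N \cap \Phi_m|/|\Phi_m| - \bigl|\bigcup_{n \leq N}(A_n \setminus A_n')\bigr|/|\Phi_m|$. Since $|\Phi_m| \to \infty$, sending $m \to \infty$ and then $N \to \infty$ gives $\underline{d}_\Phi(C) \geq L$, which together with the upper bound yields $d_\Phi(C) = L$. The main obstacle is the upper bound: upper density is not countably subadditive in general (which is precisely why the naive choice $A_n' = A_n$ may not work), so one must use the thresholds $k_n$ to guarantee that only an initial segment of the $A_n'$'s contributes at each F\o lner scale and that the resulting truncation error is summable.
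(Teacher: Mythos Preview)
Your argument is correct. The choice of thresholds $k_n$ ensuring both that $(k_n)$ is increasing and that $|A_n\cap\Phi_m|/|\Phi_m|\le d_\Phi(A_n)+2^{-n}$ for $m\ge k_n$ is exactly what is needed: the key containment $C\cap\Phi_m\subseteq B_{N(m)}\cap\Phi_m$ follows from $A_n'\cap\Phi_m=\varnothing$ for $n>N(m)$, and the tail estimate is then a clean summable error. The lower bound is immediate from the cofiniteness of each $A_n'$ in $A_n$ together with $|\Phi_m|\to\infty$.

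As for comparison with the paper: the paper does not actually give a proof of this lemma. It is stated with a citation to \cite[Lemma~5.1]{BCRZ} and used as a black box in the proofs of Theorems~\ref{ARall} and~\ref{BetterARall}. Your write-up therefore supplies a self-contained argument where the paper defers to the literature; the approach you take (trimming $A_n$ on an initial segment of F\o lner sets so that only finitely many $A_n'$ meet any fixed $\Phi_m$) is the standard one and is essentially what the cited reference does.
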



Before proving Theorem \ref{BetterARall}, we give a proof of the case $G=\mathbb{Z}$ which does not rely on tools from ergodic theory. This also serves as a model of the proof of Theorem \ref{BetterARall} for arbitrary discrete  countably infinite  abelian groups.

For the proof of Theorem \ref{ARall}, the notion of \textit{well-distribution} for a sequence in $\T$ will be used. For a more detailed exposition of this theory, see \cite{KN}. 
A sequence $(x_n)$ in $\T$ is called \textit{well-distributed} if for any measurable $A\subseteq \T$ with $\mu(\partial A)=0$, \[\lim_{N-M\to\infty}\frac{1}{N-M}\sum_{n=M+1}^N\mathbf{1}_{A}(x_n)=\mu(A),\]
where $\mu$ is the Lebesgue measure and $\partial A$ is the boundary of $A$, i.e., the closure of $A$ minus the interior of $A$. 
Equivalently, $(x_n)$ is well-distributed if for any F\o lner sequence $\Phi$ in $\Z$ and any measurable $A\subseteq\T$ with $\mu(\partial A)=0$, \[\lim_{n\to\infty}\frac{1}{|\Phi_n|}\sum_{k\in \Phi_n}\mathbf{1}_A(x_k)=\mu(A).\]

\begin{theorem}\label{ARall} Let $c\in(0,1)$ and let $\Phi$ be a F\o lner sequence in $\mathbb{Z}$. Then, there is a Straus set $A\subseteq \mathbb{Z}$ such that $d_{\Phi}(A)=c$.
\end{theorem}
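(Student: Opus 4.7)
The plan is to construct $A$ as the complement of a union of shifted ``return-time'' sets for carefully chosen irrational rotations, each tailored to destroy recurrence of one specific shift of $A$. Enumerate $\Z = \{t_n : n \in \N\}$, and choose real numbers $\alpha_1, \alpha_2, \ldots$ such that $\{1, \alpha_1, \alpha_2, \ldots\}$ is linearly independent over $\Q$, together with $\epsilon_n > 0$ satisfying $\prod_{n}(1 - 2\epsilon_n) = c$ (equivalently $\sum \epsilon_n < \infty$). For each $n$, set
\[
R_n = \{k \in \Z : k\alpha_n \bmod 1 \in (-\epsilon_n, \epsilon_n)\} \quad \text{and} \quad A_n = R_n - t_n.
\]
The target is $A = \Z \setminus \bigcup_n A_n'$, where the cofinite subsets $A_n' \subseteq A_n$ will be produced by Lemma \ref{fivepointone}.

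To apply that lemma I first compute the $\Phi$-densities of the $A_n$'s and their finite unions. Well-distribution of $(k\alpha_n)_{k \in \Z}$ together with shift-invariance of $d_\Phi$ gives $d_\Phi(A_n) = 2\epsilon_n$. More importantly, the joint well-distribution of $(k\alpha_1, \ldots, k\alpha_n) \in \T^n$ (which follows from Weyl's criterion and rational independence) shows via inclusion-exclusion that $d_\Phi(\bigcup_{i \leq n} A_i)$ exists and equals $1 - \prod_{i \leq n}(1-2\epsilon_i)$. Lemma \ref{fivepointone} then supplies cofinite $A_n' \subseteq A_n$ with $d_\Phi(\bigcup_n A_n') = 1 - c$, so that $d_\Phi(A) = c$.

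It remains to show that $A$ is AR. Fix $t = t_n$. Since $A$ is disjoint from $A_n'$, the set $F = ((A + t_n) \cap R_n) \setminus \{0\}$ is contained in $(A_n \setminus A_n') + t_n$ and is therefore finite. Consider the measure-preserving rotation system on $\T$ defined by $x \mapsto x + \alpha_n$ with Lebesgue measure $\mu$; choose $\delta \in (0, \epsilon_n)$ smaller than $\|k\alpha_n\|$ for every $k \in F$ (where $\|\cdot\|$ denotes distance to the nearest integer), and let $Y = [0, \delta) \subseteq \T$. For every $k \in (A + t_n) \setminus \{0\}$, either $k \in F$ or $k \notin R_n$, and in both cases $\|k\alpha_n\| \geq \delta$, so $\mu(Y \cap (Y + k\alpha_n)) = 0$. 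Hence $A + t_n$ is not a set of recurrence, so $A$ is AR.

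The main technical obstacle is establishing the existence of the density $d_\Phi(\bigcup_{i \leq n} A_i)$, which is exactly what is needed to invoke Lemma \ref{fivepointone} and to pin the density of $A$ at $c$ rather than merely bound it. This hinges on the joint well-distribution alluded to above, together with a careful inclusion-exclusion computation that accounts for the various translates $A_i = R_i - t_i$, whose defining intervals in $\T$ have their ``centers'' shifted from $0$ to $-t_i\alpha_i$. A secondary, minor hurdle is the finite residue $F$ of exceptional points left over after passing to cofinite subsets, which is absorbed by shrinking the interval $Y$.
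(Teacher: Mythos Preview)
Your argument is correct and takes a genuinely different route from the paper's. The paper works with a \emph{single} irrational rotation $x\mapsto x+\alpha$ on $\T$: for each $n$ it removes the return-time set to a union of short arcs centered at $-n\alpha,\dots,n\alpha$ (so that every shift $k$ with $|k|\le n$ is handled at stage $n$), and then hits the target density $c$ by an intermediate-value argument in an auxiliary parameter $t$ controlling the arc lengths. You instead assign a separate irrational $\alpha_n$ to each shift $t_n$, with $\{1,\alpha_1,\alpha_2,\dots\}$ rationally independent; the resulting joint well-distribution on $\T^n$ (equivalently, unique ergodicity of the toral rotation) makes the individual return-time sets behave like independent events, so $d_\Phi\bigl(\bigcup_{i\le n}A_i\bigr)=1-\prod_{i\le n}(1-2\epsilon_i)$ exactly, and you can land on $c$ simply by choosing the $\epsilon_n$---no continuity argument needed. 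Both proofs feed these computations into Lemma~\ref{fivepointone} in the same way, and your treatment of the finite exceptional set $F$ in the AR verification is the analogue of the paper's ``finite intersection'' step. Your approach is arguably tidier for $\Z$; the trade-off is that the paper's single-homomorphism scheme is precisely what generalizes to arbitrary countable abelian groups in Theorem~\ref{BetterARall}, where a supply of independent characters is not available and the intermediate-value step becomes essential.
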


\begin{proof}
Let $t\in(0,1)$, and let $\alpha \in \T$ be irrational. For a set $A\subseteq \T$, let $R_A$ denote $\{n\in\mathbb{Z}\colon n\alpha\in A\}$. For $n\geq 0$, let $r_n=t/((2n+1)\cdot2^{n+1})$, let \[B_n=\bigcup_{k=-n}^n (-r_n+k\alpha,r_n+k\alpha),\] and let $C_n=\bigcup_{k=0}^n B_{k}$. Since each of $B_n$ and $C_n$ have boundaries with measure zero, and since $(n\alpha)$ is well-distributed in $\T$, we have $d_{\Phi}(R_{B_n})=\mu(B_n)$ and $d_{\Phi}(R_{C_n})=\mu(C_n)$ for all $n$. Since $\sum\mu(B_n)\leq 2t$, by Lemma \ref{fivepointone}, there are cofinite subsets $R'_{B_n}\subseteq R_{B_n}$ such that
\[d_{\Phi}{\left(\bigcup_{n=0}^\infty R'_{B_n}\right)}=\lim_{n\to\infty}\mu(C_n)=\mu{\left(\bigcup_{n=0}^\infty B_n\right)}.\]
Let $f_n(t)=\mu(C_n)$, and let $f(t)=\lim_{n\to\infty}f_n(t)$. For all $t$, \[|f(t)-f_n(t)|\leq \sum_{k=n+1}^\infty \mu(B_{k})\leq \frac{1}{2^n},\] so the convergence is uniform.
Since the boundary of each interval has measure zero, the functions $f_n$ are continuous, so $f$ is continuous. Moreover, $\lim_{t\to 1}f(t)=1$ (since $\lim_{t\to 1}\mu(B_0) = 1$) and $\lim_{t\to 0}f(t)=0$, so we can choose $t$ so that $d_{\Phi}\left(\bigcup R_{B_n}'\right)=1-c$. Let $A=\mathbb{Z}\setminus \bigcup R'_{B_n}$. Then, $d_{\Phi}(A)=c$. It remains to show that $A$ is AR. Let $k\in\Z$.
Then \[(A-k)\cap{\left \{n\in\mathbb{Z}\colon{\left(\frac{-r_k}{2}+n\alpha,\frac{r_k}{2}+n\alpha\right)}\cap {\left(\frac{-r_k}{2},\frac{r_k}{2}\right)}\neq\varnothing\right\}}\] is finite, so $A-k$ cannot be a set of recurrence.  
\end{proof}

In fact, a version of Theorem \ref{ARall} holds true for all  countably infinite  abelian groups.

\begin{theorem}\label{BetterARall} Let $G$ be a \rmpars{discrete}  countably infinite  abelian group. Let $c\in(0,1)$, and let $\Phi$ be a F\o lner sequence in $G$. Then, there is a Straus set $A\subseteq G$ such that $d_{\Phi}(A)=c$.
\end{theorem}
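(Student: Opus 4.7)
The plan is to generalize the proof of Theorem \ref{ARall}, replacing the irrational rotation of $\T$ by a suitable minimal isometric action of $G$ on a compact metric abelian group $K$.

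First, I would produce an injective homomorphism $\phi \colon G \to K$ with dense image, where $K$ is a compact metric abelian group. Since $G$ is countable, the Pontryagin dual $\widehat{G}$ is compact and metric, so one can choose a countable dense sequence of characters $(\chi_n)_{n \in \N}$. By Pontryagin duality $\widehat{G}$ separates points of $G$, and density of $(\chi_n)$ then forces $(\chi_n)$ to separate points of $G$ as well. Hence $\phi(g) := (\chi_n(g))_{n \in \N}$ is an injective homomorphism $G \to \T^\N$, and I would take $K := \overline{\phi(G)} \subseteq \T^\N$, endowed with its Haar measure $\mu$. The translation action $g \cdot x = x + \phi(g)$ on $K$ is minimal (by density of $\phi(G)$) and by isometries in the product metric, hence uniquely ergodic. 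By Theorem \ref{uniqueergodic}, for every Borel $E \subseteq K$ with $\mu(\partial E) = 0$, $d_\Phi(\{g \in G : \phi(g) \in E\}) = \mu(E)$.

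Next, I would enumerate $G = \{g_0, g_1, g_2, \dots\}$ with $g_0 = e$, and for each $t \in (0, 1)$ construct open sets $B_n(t) \subseteq K$ satisfying: each $B_n(t)$ contains an open metric ball $B(\phi(g_n), r_n(t))$ for some $r_n(t) > 0$; $\mu(B_n(t)) \leq t/2^n$; $\mu(B_0(t)) = t$; $\mu(\partial B_n(t)) = 0$; and $t \mapsto f(t) := \mu\bigl(\bigcup_n B_n(t)\bigr)$ is continuous with $\lim_{t \to 0}f(t) = 0$ and $\lim_{t \to 1}f(t) = 1$. The density formula above and Lemma \ref{fivepointone} then yield cofinite subsets $R'_{B_n(t)} \subseteq R_{B_n(t)} := \phi^{-1}(B_n(t))$ with $d_\Phi\bigl(\bigcup_n R'_{B_n(t)}\bigr) = f(t)$, so $A_t := G \setminus \bigcup_n R'_{B_n(t)}$ has $d_\Phi(A_t) = 1 - f(t)$. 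Continuity and the intermediate value theorem select $t$ for which $d_\Phi(A_t) = c$, and I set $A := A_t$.

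To verify that $A$ is AR, I fix $g \in G$ and pick $k$ with $g_k = -g$. Using the MPS $(K, \mu, G)$ and a small open ball $Y = B(y_0, \eta)$, the condition $\mu(Y \cap (Y + \phi(h))) > 0$ for $h = g + a \in g + A$ forces $\phi(a) \in (Y - Y) - \phi(g) \subseteq B(\phi(g_k), 2\eta)$. When $2\eta \leq r_k(t)$, this places $a \in R_{B_k(t)}$, so $a$ lies in the finite set $R_{B_k(t)} \setminus R'_{B_k(t)}$. The resulting finitely many bad $h$ have $\phi(h) \neq 0$ by injectivity of $\phi$; shrinking $\eta$ below $\frac{1}{2}\min d_K(0, \phi(h))$ over these $h$ removes them while preserving $\mu(Y) > 0$. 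Hence $g + A$ is not a set of measurable recurrence, completing the argument.

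The main technical obstacle is the construction of the open sets $B_n(t)$ with the continuity property on an arbitrary compact metric abelian group: in Theorem \ref{ARall} one uses explicit intervals on $\T$ whose measures depend linearly on $t$, but in general $K$ one must interpolate open sets of prescribed measure (via a Sierpi\'nski-style argument applied to the non-atomic Haar measure on $K$) while also ensuring each $B_n(t)$ contains a genuine metric ball around $\phi(g_n)$ and has a boundary of measure zero. Once this is in place, the density computation and the anti-recurrence verification follow the template of Theorem \ref{ARall} closely.
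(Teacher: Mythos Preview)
Your proposal follows the same overall strategy as the paper---construct a compact abelian group $K$ with a homomorphism $\phi\colon G\to K$ having dense image, then mimic the ball-removal argument of Theorem~\ref{ARall}---but differs in how $K$ and $\phi$ are obtained. The paper proceeds by case analysis on $G$ (non-torsion; torsion with an infinite Pr\"ufer projection; or a direct sum of finite cyclic groups), landing in $\T$ or in $\prod_j\Z/n_j\Z$ with a hand-built invariant metric for which every sphere is explicitly shown to have measure zero; this lets the $B_n$ be literal metric balls and makes the continuity of $t\mapsto\mu(C_n(t))$ automatic. Your route via a dense sequence of characters is more uniform and yields an \emph{injective} $\phi$, which cleanly disposes of the finitely many exceptional elements in the anti-recurrence step by a further shrinking of $\eta$ (the paper's $\phi$ need not be injective in its first two cases, so its one-line conclusion there implicitly uses that a finite set is never a set of recurrence). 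The price is exactly the obstacle you flag: on a general $K\subseteq\T^\N$ one must still arrange that the relevant open sets have null boundary and that $t\mapsto\mu(B_n(t))$ is continuous with $\mu(B_0(t))\to1$ as $t\to1$. This is achievable---for instance by choosing weights in the product metric so that all spheres have measure zero, exactly as the paper does for $\prod_j\Z/n_j\Z$---but a bare Sierpi\'nski argument will not produce open sets with null boundary varying continuously in a parameter, so this step requires the same kind of care the paper devotes to its metric.
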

\begin{proof} We will construct a compact abelian   group $X$ and a homomorphism $\phi\colon G\to X$ with dense image. Depending on $G$, this construction can occur three different ways.

First, assume that $G$ has an element of infinite order, i.e., $G$ is not a torsion group. We define a homomorphism from $G$ to $X=\mathbb{T}$ as follows. By \cite[Theorems 23.1 and 24.1]{Fu}, $G$ can be embedded in $T \oplus \bigoplus_{\N}\Q$, where $T$ is a torsion group. Let $\iota$ be the embedding from $G$ into this direct sum. Let $g$ be an element of $G$ with infinite order. Then, there is a projection $\rho$ from $T \oplus \bigoplus_{\N}\Q$ onto $\Q$ such that $\rho(\iota(g))$ is nonzero. Choose an irrational $\alpha\in\T$.
Then the map $h\mapsto \rho(\iota(h))\alpha$ is a homomorphism from $G$ to $\T$ with dense image, so in this case, we will let $X=\T$ and let $\phi\colon G\to\T$ be such that $\phi(h)=\rho(\iota(h))\alpha$.

Next, assume that $G$ is a torsion group. For $p$ a prime and $n\in\N$, let $H_{p,n}$ be the Pr\"ufer $p$-group $\Z[p^\infty]$. (By definition, for $n_1,n_2\in\N$, $H_{p,n_1}\cong H_{p,n_2}$.)
By \cite[Theorems 23.1 and 24.1]{Fu}, $G$ can be embedded into $\bigoplus_{p\text{ prime}}\bigoplus_{n\in\mathbb{N}}H_{p,n}$.
Let $i$ be the corresponding inclusion map, and let $\pi_{p,n}$ be the projection from the direct sum onto $H_{p,n}$.
For convenience, we abbreviate $\rho_{p,n} = \pi_{p,n} \circ i$.
There are two cases.

First, assume there is some pair $(p,n)$ such that $|\rho_{p,n}(G)|$ is infinite. Note that $H_{p,n}$ is isomorphic to the subgroup $\{a/p^k\colon a\in\Z,k\in\N\}$ of $\T$; call this isomorphism $\psi$. Let $\phi=\psi\circ\rho_{p,n}$. Then $\phi\colon G\to\T$ is a homomorphism with dense image, so we set $X=\mathbb{T}$.

On the other hand, assume that for all pairs $(p,n)$, $|\rho_{p,n}(G)|$ is finite. Then $G$ is embedded in $\bigoplus_p\bigoplus_n\rho_{p,n}(G)$.
Since $G$ is a subgroup of a direct sum of finite cyclic groups, by \cite[Theorem 18.1]{Fu}, there is an infinite sequence $(n_j)_{j\in\N}$ of integers greater than $1$ such that $G\cong \bigoplus_{j}\Z/{n_j}\Z$.  We will assume without loss of generality that $n_{j+1}\geq n_j$ for all $j\in\N$.  Then there is a dense embedding $\phi$ from $G\cong \bigoplus_j \Z/{n_j}\Z$ to the  group  $X=\prod_{j}\Z/{n_j}\Z$.
 For each $n_j$, define a metric $\delta_j$ on $\Z/n_j\Z$ by $\delta_j([x],[y]) = \min_{k\in\Z} |x - y + kn_j|$, where $x,y\in\mathbb{Z}$ and $[x]$ and $[y]$ are the equivalence classes of $x$ and $y$ in $\Z/n_j\Z$. Observe that $\delta_j(x,y)\in \{0,1,\dots,\lfloor n_j/2\rfloor\}$ for all $x,y\in \Z/n_j\Z$. 
We can give $X$ the product metric $\delta(\mathbf{x},\mathbf{y})=\sum_{j\in\N}n_j^{-2j}\delta_j(x_j,y_j)$.
The Haar measure $\mu$ on $X$ is the product measure of the normalized counting measures on each $\Z/{n_j}\Z$. We will show that for any $a\in[0,1]$, the set $\{\mathbf{x}\in X\colon\delta(\mathbf{x},0)=a\}$ has measure zero. Since $\delta$ is invariant under the group operation of $X$, this will imply $\{\mathbf{x}\in X\colon\delta(\mathbf{x},\mathbf{y})=a\}$ has measure zero for all $\mathbf{y}\in X$.

Let $a\in [0,1]$; we first claim that there is at most one sequence $\mathbf{s}\in X$ with $s_j\in \{0,1,\dots,\lfloor n_j/2\rfloor\}$ for all $j\in\N$ and
\begin{equation}
\label{lksdajf}
\sum_{j\in\mathbb{N}}\frac{s_j}{n_j^{2j}}  = a.
\end{equation}
Assume there are two distinct such sequences $\mathbf{r},\mathbf{s}\in X$. Let $k$ be the smallest positive integer with $s_k\neq r_k$. Then 
\[\left|\sum_{j\leq k}\frac{s_j - r_j}{n_j^{2j}}\right|\geq \frac{1}{n_k^{2k}}.\]
However,
\[\left|\sum_{j> k}\frac{s_j - r_j}{n_j^{2j}}\right|\leq \sum_{j> k}\frac{|s_j - r_j|}{n_j^{2j}}\leq \sum_{j>k}\frac{1}{n_j^{2j-1}}\leq \sum_{j>k}\frac{1}{n_k^{2j-1}} = \frac{1}{n_k^{2k+1}} \frac{n_k^2}{n_k^2-1}<\frac{1}{n_k^{2k}},\]
so we cannot have 
\[\sum_{j\in\mathbb{N}}\frac{s_j}{n_j^{2j}}  = a = 
\sum_{j\in\mathbb{N}}\frac{r_j}{n_j^{2j}}.\]

Now assume $\mathbf{s}\in X$ satisfies \eqref{lksdajf}.
We will show 
\[\mu(\{\mathbf{x}\in X\colon\delta_j(x_j,0) = s_j\text{ for all }j\in \N\}) = 0.\]
Let $\mu_j$ be the normalized counting measure on $\Z/n_j\Z$, so $\mu_j(A) = |A|/n_j$ for any $A\subseteq\Z/n_j\Z$. If $n_j = 2$, then $\mu_j(\{x\in \Z/n_j\Z\colon\delta_j(x,0) = s_j\}) = 1/2<2/3$, and if $n_j > 2$, then $\mu_j(\{x\in \Z/n_j\Z\colon\delta_j(x,0)=s_j\}) \leq 2/n_j\leq 2/3$. Thus for all $k\in\mathbb{N}$,
\begin{align*}
\mu(\{\mathbf{x}\in X\colon\delta_j(x_j,0) = s_j\text{ for all }j\in \N\})&\leq \mu(\{\mathbf{x}\in X\colon\delta_j(x_j,0) = s_j\text{ for all }j\leq k\})\\&\leq (2/3)^k,
\end{align*}
so 
\[\mu(\{\mathbf{x}\in X\colon\delta_j(x_j,0) = s_j\text{ for all }j\in \N\}) = 0\]
and thus $\mu(\{\mathbf{x}\in X\colon\delta(\mathbf{x},0)=a\})=0$.
If there is no sequence $\mathbf{s}\in X$ with $s_j\in \{0,1,\dots,\lfloor n_j/2\rfloor\}$ satisfying \eqref{lksdajf}, we also have $\mu(\{\mathbf{x}\in X\colon\delta(\mathbf{x},0)=a\})=0$.

Here, the casework ends, and for $G$, the following is true. We have a  homomorphism with dense image   $\phi \colon G\to X$, where $X$ is a compact  abelian  group. Moreover, $X$ is endowed with an invariant metric $\delta$ such that for $x\in X$ and $a\in [ 0,1 ] $, we have $\mu(\{y\in X\colon\delta(x,y)=a\})=0$, where $\mu$ is the normalized Haar measure on $X$.  Additionally, $\delta(x,y)\leq 1$ for all $x,y\in X$. 

We define a natural $G$-action on $X$ by isometries via $gx=x+\phi(g)$. Since $\Img(\phi)$ is dense, the action is uniquely ergodic, with the normalized Haar measure $\mu$ as the unique invariant measure. Thus, by Theorem \ref{uniqueergodic}, for any $f\colon X\to\mathbb{R}$ for which the set of discontinuities has measure zero and $x\in X$, \begin{equation}\label{ergodicbusiness}
\lim_{n\to\infty}\frac{1}{|\Phi_n|}\sum_{g\in \Phi_n}f(gx)=\int_Xf\,d\mu.
\end{equation}
Suppose $Y\subseteq X$ is a Borel set with $\mu(\partial Y)=0$, and write $R_Y=\{g\in G\colon\phi(g)\in Y\}$.
Taking $x = 0$ and $f = \mathbf{1}_Y$, equation \eqref{ergodicbusiness} becomes \[d_\Phi(R_Y)=\lim_{n\to\infty}\frac{1}{|\Phi_n|}\sum_{g\in\Phi_n}\mathbf{1}_Y(\phi(g))=\int_X\mathbf{1}_Y\,d\mu=\mu(Y),\]
  since the set of discontinuities of $\mathbf{1}_Y$ is exactly $\partial Y$.

The remainder of the argument proceeds along the same lines as in Theorem \ref{ARall}.  Enumerate $G = \{g_1,g_2,\dots\}$,  let $t\in(0,1)$, and for $n\geq 1$, let  $r_n=t/(n\cdot 2^{ n-1})$. Let
\[B_{n}=\bigcup^{n }_{k=1}(\{x\in X\colon\delta(0,x)<r_{n}\}+\phi(g_{k})),\] and let $C_{n}=\bigcup^n_{k= 1 } B_{ k }$. Using Lemma \ref{fivepointone}, for each $n$, find a cofinite $R_{B_n}'\subset R_{B_n}$ such that  
\[d_{\Phi}{\left(\bigcup^\infty_{n= 1 } R_{B_n}'\right)}=\lim_{n\to\infty}\mu(C_{n})=\mu{\left(\bigcup^\infty_{n= 1 }B_{n}\right)}.\] Let $f_n(t)=\mu(C_{n})$. Note that since the construction of the sets $C_n$ depends continuously on $t$, the functions $f_n$ are continuous. Let $f=\lim_{n\to\infty}f_n$.
The convergence of $(f_n)$ to $f$ is uniform, so $f$ is continuous. Note that $\lim_{t\to 1}f(t)=1$  (since $\text{diam}\,(X)\leq 1$)  and $\lim_{t\to 0}f(t)=0$, so we can choose $t$ so that $d_{\Phi}(\bigcup R'_{B_n})=1-c$. Then, if $A=G\setminus\bigcup R'_{B_n}$, we have $d_{\Phi}(A)=c$. Let $n\in\N$. Then $(A-g)\cap\{h\in G\colon\delta(\phi(h),0)<r_{k}\}$ is finite, so $A-g_{k}$ cannot be a set of recurrence.
\end{proof}

Theorem \ref{BetterARall} can be extended to  countably infinite  commutative cancellative semigroups, allowing us to conclude that discordant subsets with any density in $(0,1)$ exist in, e.g., $(\N,+)$ and $(\N, \times)$ as well.

\begin{corollary}\label{semiAll}
Let $G$ be a  countably infinite  commutative cancellative semigroup, let $\Phi$ be a F\o lner sequence in $G$, and let $c\in (0,1)$. Then there is a discordant set $A \subseteq G$ with $d_{\Phi}(A)=c$. 
\end{corollary}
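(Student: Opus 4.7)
The plan is to reduce to the abelian group setting provided by Theorem~\ref{BetterARall} by passing to the group of fractions. Since $G$ is a countably infinite commutative cancellative semigroup, it embeds into its Grothendieck group $\tilde G$, a countably infinite abelian group every element of which has the form $ab^{-1}$ for some $a,b\in G$. I will realize the desired discordant set in $G$ as the intersection with $G$ of a Straus set produced inside $\tilde G$.

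First I would verify that $\Phi = (\Phi_n)$ remains a F\o lner sequence when viewed inside $\tilde G$. For an arbitrary $g = ab^{-1}\in \tilde G$, since multiplication by $b$ is a bijection on $\tilde G$,
\[
|\Phi_n \bigtriangleup g\Phi_n| = |b\Phi_n \bigtriangleup a\Phi_n| \leq |b\Phi_n \bigtriangleup \Phi_n| + |\Phi_n \bigtriangleup a\Phi_n|,
\]
and both summands on the right are $o(|\Phi_n|)$ by the F\o lner property of $\Phi$ in $G$. Then, applying Theorem~\ref{BetterARall} to $\tilde G$, $\Phi$, and $c$, I obtain a Straus set $\tilde A \subseteq \tilde G$ with $d_\Phi(\tilde A) = c$; in particular $\tilde A$ is not piecewise syndetic in $\tilde G$. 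Setting $A = \tilde A \cap G$, the fact that $\Phi_n \subseteq G$ gives $A\cap \Phi_n = \tilde A \cap \Phi_n$ for every $n$, whence $d_\Phi(A) = c > 0$.

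It remains to check that $A$ is not piecewise syndetic in $G$, and this is the step to watch. Arguing contrapositively, suppose $H^{-1}A$ is thick in $G$ for some finite $H \subseteq G$; I claim the same $H$ witnesses that $H^{-1}\tilde A$ is thick in $\tilde G$, which would contradict the non-piecewise syndeticity of $\tilde A$ there. Given any finite $F = \{a_1b_1^{-1},\dots,a_kb_k^{-1}\}\subseteq \tilde G$, I set $b = b_1b_2\cdots b_k \in G$; commutativity ensures that $a_ib_i^{-1}b = a_i \prod_{j\neq i} b_j \in G$ for every $i$, so $Fb$ is a finite subset of $G$. Thickness of $H^{-1}A$ in $G$ then yields some $g \in G$ with $Fbg \subseteq H^{-1}A \subseteq H^{-1}\tilde A$, and $bg\in \tilde G$ is the shift witnessing thickness for $F$. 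The commutativity of $G$ is used essentially here---in the noncommutative case one could not so freely gather denominators---but given our hypotheses the argument is routine. This contradiction shows $A$ is discordant, completing the proof.
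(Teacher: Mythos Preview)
Your proof is correct and follows essentially the same approach as the paper: embed $G$ into its group of fractions $\tilde G$, verify that $\Phi$ remains a F\o lner sequence there, apply Theorem~\ref{BetterARall}, and then intersect the resulting set with $G$, checking that non-piecewise syndeticity survives by pushing finite subsets of $\tilde G$ into $G$ via multiplication by a common denominator. The only cosmetic differences are that you write $g = ab^{-1}$ explicitly and use $b = \prod b_i$ as the clearing denominator, whereas the paper phrases these steps slightly more abstractly.
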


\begin{proof}
We can embed $G$ in a  countably infinite  abelian group $\widetilde{G}$ by a process similar to that of embedding an integral domain in a field (see \cite[Exercise 1.1.8]{HS} for more details about this construction). 
The elements of $\widetilde{G}$ can be thought formally as ``fractions'' of elements of $G$, so for any $g\in \widetilde{G}$, there is an $h\in G$ such that $gh\in G$.  
We will show that $\Phi$ is a F\o lner sequence in $\widetilde{G}$ (see \cite[Theorem 2.12]{BDM}). Let $g\in \widetilde{G}$, and let $h\in G$ be such that $hg\in G$. For each $n\in\N$,
\[\frac{|\Phi_n\bigtriangleup g\Phi_n|}{|\Phi_n|}\leq \frac{|\Phi_n\bigtriangleup hg\Phi_n|}{|\Phi_n|}+\frac{|g\Phi_n\bigtriangleup hg\Phi_n|}{|\Phi_n|}=\frac{|\Phi_n\bigtriangleup hg\Phi_n|}{|\Phi_n|}+\frac{|\Phi_n\bigtriangleup h\Phi_n|}{|\Phi_n|},\]
so 
\[\lim_{n\to\infty}\frac{|\Phi_n\bigtriangleup g\Phi_n|}{|\Phi_n|}=0,\]
and thus $\Phi$ is a F\o lner sequence in $\widetilde{G}$.
By Theorem \ref{BetterARall}, there is a discordant set $\widetilde{A}\subseteq \widetilde{G}$ with $d_\Phi(\widetilde{A})=c$. Set $A=\widetilde{A}\cap G$. We claim that $A$ is not piecewise syndetic in $G$. Suppose, toward the contrapositive, that there is a finite $H\subseteq G$ such that $H^{-1}A$ is thick. Let $F\subseteq \widetilde{G}$ be finite and select $g\in G$ such that $Fg\subseteq G$. Then there is an $h\in G$ such that $Fgh\subseteq H^{-1}A$. But then $H^{-1}A\subseteq H^{-1}\widetilde{A}$ is piecewise syndetic in $\widetilde{G}$, which establishes the claim.
Thus $A$ is a non-piecewise syndetic subset of $G$ with $d_\Phi(A)=c$. 
\end{proof}

\section{Large subsets of \texorpdfstring{$\{0,1\}^G$}{\{0,1\}\textasciicircum G}}\label{topology}

Let $G$ be a  countably infinite  semigroup.
In this section we will identify a subset $A\subseteq G$ with its indicator function $\mathbf1_A\in\{0,1\}^G$. We will say that $\mathbf1_A$ is \textit{thick}, \textit{syndetic}, etc.\ whenever $A$ possesses these properties. Thus, families of subsets of $G$ are identified with subsets of $\{0,1\}^G$. To study the topological properties of subsets of $\{0,1\}^G$, we give $\{0,1\}^G$ the product topology, making it a Cantor space. In this space, a base for the topology is given by the \textit{cylinder sets}, which have the form \[V(L_1,L_2)=\{\alpha\in\{0,1\}^G\colon \text{$\alpha(L_1)=\{1\}$ and $\alpha(L_2)=\{0\}$}\}\] for finite, disjoint $L_1,L_2\subseteq G$. Let us write \[\mathcal L=\{(L_1,L_2)\in\mathcal P(G)\times\mathcal P(G)\colon\text{$L_1$ and $L_2$ are finite and disjoint}\}.\]

\subsection{Background and motivation}

Recall that if $X$ is a topological space homeomorphic to the Cantor set, it is a \textit{Baire space}, meaning that it cannot be written as a countable union of nowhere dense sets. A subset of $X$ which is a countable union of nowhere dense sets is called \textit{meager}, and a subset of $X$ whose complement is meager is \textit{comeager}. The fact that $X$ is a Baire space means that no subset can be both meager and comeager, so one can think of the elements of a comeager set as being ``topologically generic''. We wish to stress in this section the analogy between topology and measure theory in which ``meager'' corresponds to ``measure 0'' and ``comeager'' corresponds to ``measure 1''.

We will describe comeager subsets of $\{0,1\}^G$, whose elements can thus be thought of as ``topologically generic'' subsets of $G$. Let us first motivate our results by examining them in a familiar setting. Consider the case $G=\N$, where $\{0,1\}^\N$ is the space of infinite binary sequences. An infinite binary sequence is called \textit{disjunctive} if every finite binary word occurs within it. (Formally, $\alpha\in\{0,1\}^\N$ is disjunctive if, for each $k\in\N$ and $\omega\in\{0,1\}^k$, there exists $n\in\N$ such that $\alpha(n+i-1)=\omega(i)$ for each $1\leq i\leq k$.) The subset of $\{0,1\}^\N$ consisting of disjunctive sequences is large both topologically and measure-theoretically: it is comeager and of measure 1.

However, the topological and measure-theoretical worlds diverge once we consider the limiting frequencies with which finite subwords occur. Given a sequence $\alpha\in\{0,1\}^\N$ and a finite word $\omega\in\{0,1\}^k$, one says that $\omega$ occurs in $\alpha$ with a limiting frequency \begin{equation}\label{normalnormal}
\lim_{n\to\infty}\frac{|\{1\leq i\leq n-k+1\colon\text{$\alpha(i+j-1)=\omega(j)$ for each $1\leq j\leq k$}\}|}{n}.
\end{equation}
It is a classical result of \'Emile Borel \cite{Bo} that the set of \textit{normal} sequences, those for which every word of length $k$ occurs as a subword with limiting frequency $2^{-k}$, has measure 1. Moreover, as we will see, it is a meager set.

On the other hand, one can consider sequences for which limits analogous to \eqref{normalnormal} (but modified slightly to avoid counting overlapping sequences) fail to exist as badly as possible. For these sequences, the $\limsup$ of the frequency of any finite word is 1 and the corresponding $\liminf$ is 0. We will call such sequences \textit{extremely non-averageable} (or \textit{ENA}). It turns out that the set of ENA sequences, in contrast to the set of normal sequences, is comeager \cite[Theorem 1]{CZ}.

We will make the following generalizations of the facts we have just stated: assuming only cancellativity, we prove an analogue in a  countably infinite  semigroup $G$ to the statement that the set of disjunctive sequences is comeager (Theorem \ref{disjunctive}); assuming also amenability, we prove an analogue to the statement that the set of ENA sequences is comeager (Theorem \ref{topol-normal}). A generalization of measure-theoretical normality to  countably infinite  amenable semigroups and the corresponding results can be found in \cite{BDM}. 

Since the set of normal sequences and the set of ENA sequences are disjoint, we see also that the set normal sequences is meager, and that the set of ENA sequences has measure 0 (see \cite[Proposition 4.7]{BDM} regarding the former statement in arbitrary amenable cancellative semigroups). This allows us to give another motivation for the results of this section. One reason we gave for studying discordant sets is that they illustrate how topological and analytic notions of largeness for subsets of $G$ diverge; in this section, we construct subsets of $\{0,1\}^G$ on which basic topological and analytic notions of largeness disagree. 

Finally, we remark that, although we prove the results of this section in the space $\{0,1\}^G$, their generalizations to $\{0,\dots,n-1\}^G$ (which can be thought of as the space of $n$-part partitions, or $n$-colorings, of $G$) are apparent and can be achieved with the same arguments.

\subsection{Disjunctive elements of $\{0,1\}^G$}

In this subsection we will assume that $G$ is cancellative. Call $\alpha\in\{0,1\}^G$ \textit{disjunctive} if for any $(L_1,L_2)\in\mathcal L$ there is some $g\in G$ for which $\alpha\in V(L_1g,L_2g)$.\footnote{In the notation of Section \ref{topol-dynamics}, we can equivalently say that $\alpha$ is disjunctive if and only if, for every $(L_1,L_2) \in \mathcal{L}$, there exists $g \in G$ for which $g\alpha \in V(L_1,L_2)$. This definition has the benefit of making sense for non-cancellative semigroups.}
Cancellativity here guarantees that $L_1g\cap L_2g=\varnothing$ for all $g\in G$. Note that, for $G=\N$, this definition specializes to the definition of disjunctive sequences we gave above.

\begin{theorem}\label{disjunctive}
The set of disjunctive elements of $\{0,1\}^G$ is comeager.
\end{theorem}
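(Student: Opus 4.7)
The plan is to realize the set of disjunctive sequences as a dense $G_\delta$ subset of $\{0,1\}^G$ and invoke the Baire category theorem, using that the Cantor space $\{0,1\}^G$ is a Baire space.

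For each $(L_1, L_2) \in \mathcal{L}$, I will set
\[
U_{L_1, L_2} = \bigcup_{g \in G} V(L_1 g, L_2 g).
\]
By definition, $\alpha \in \{0,1\}^G$ is disjunctive if and only if $\alpha \in U_{L_1,L_2}$ for every $(L_1,L_2) \in \mathcal L$. Since $G$ is countable, so is $\mathcal{L}$, and each $U_{L_1,L_2}$ is a union of basic open cylinder sets, hence open. Thus the set of disjunctive sequences is a countable intersection of open sets, and it suffices to show each $U_{L_1,L_2}$ is dense.

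To establish density, I fix an arbitrary basic open set $V(M_1,M_2)$ with $(M_1,M_2) \in \mathcal L$ and look for $g \in G$ such that $V(L_1g,L_2g) \cap V(M_1,M_2)$ is nonempty. Writing $F = L_1 \cup L_2$ and $K = M_1 \cup M_2$, this intersection is nonempty whenever $Fg \cap K = \varnothing$: the four sets $L_1g,L_2g,M_1,M_2$ are then pairwise disjoint (using cancellativity to get $L_1g \cap L_2g = \varnothing$), so an element of $\{0,1\}^G$ taking value $1$ on $L_1 g \cup M_1$ and $0$ on $L_2 g \cup M_2$ lies in both cylinders. The main observation is the following finiteness estimate: by left cancellativity, for each fixed $f \in F$ the map $g \mapsto fg$ is injective, so the set $\{g \in G : fg \in K\}$ has cardinality at most $|K|$. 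Summing over $f \in F$, the set $\{g : Fg \cap K \neq \varnothing\}$ has at most $|F|\cdot|K|$ elements, which is finite, so since $G$ is infinite we may pick $g$ outside it.

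This shows $U_{L_1,L_2}$ is open and dense. Because $\{0,1\}^G$ is a compact metrizable (Cantor) space, hence a Baire space, the countable intersection $\bigcap_{(L_1,L_2)\in\mathcal L} U_{L_1,L_2}$ is a dense $G_\delta$ set and therefore comeager. The only subtle point is the cancellativity argument bounding $\{g : fg \in K\}$, which is exactly where the cancellativity hypothesis is used; the rest is a routine Baire category argument.
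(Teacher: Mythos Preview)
Your proof is correct and follows essentially the same approach as the paper: both write the set of disjunctive elements as the countable intersection of the open sets $\bigcup_{g\in G} V(L_1g,L_2g)$ and verify density by finding $g$ with $(L_1\cup L_2)g$ disjoint from $M_1\cup M_2$. Your argument is in fact slightly more explicit, giving the cardinality bound $|F|\cdot|K|$ on the set of bad $g$'s where the paper simply asserts such $g$ exists by cancellativity and infiniteness.
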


\begin{proof}
For any $(L_1,L_2)\in\mathcal L$, let $A_{L_1,L_2}=\bigcup_{g\in G}V(L_1g,L_2g)$, so that the set of disjunctive elements of $\{0,1\}^G$ is \[A=\bigcap_{(L_1,L_2)\in\mathcal L}A_{L_1,L_2}.\] Now let $(L_1,L_2),(M_1,M_2)\in\mathcal L$. Since $G$ is cancellative and infinite, we can find some $g\in G$ for which $(L_1g\cup L_2g)\cap(M_1\cup M_2)=\varnothing$. Then we have $\mathbf1_{L_1g\cup M_1}\in A_{L_1,L_2}\cap V(M_1,M_2),$ so that $A_{L_1,L_2}$ is dense, since it has nontrivial intersection with each element of the open base for $\{0,1\}^G$. Since $A_{L_1,L_2}$ is a union of open sets, we have written $A$ as a countable intersection of open dense sets, which proves that $A$ is comeager.
\end{proof}

We deduce as a relevant consequence of this theorem that the set of non-piecewise syndetic elements of $\{0,1\}^G$ is meager, since any disjunctive element of $\{0,1\}^G$ is thick. If $G$ is non-cancellative, the set of piecewise syndetic elements of $\{0,1\}^G$ may fail to be comeager. For example, if there exists $z\in G$ such that for all $g\in G$, one has $gz=z$, then $\alpha\in\{0,1\}^G$ is piecewise syndetic if and only if $\alpha(z)=1$, so the family of piecewise syndetic sets cannot be dense.

\subsection{Extremely non-averageable elements of $\{0,1\}^G$}

In this subsection we will assume that $G$ is both cancellative and amenable.
Fix a F\o lner sequence $\Phi$ in $G$.
We will begin by defining $\Phi$-normality of elements of $\{0,1\}^G$, as defined in \cite{BDM}, for comparison with the definition we will give for ENA.
We will also utilize the notion of $\Phi$-normality in Theorem \ref{normalena}.

Given $\alpha\in\{0,1\}^G$, one says that $\alpha$ is \textit{$\Phi$-normal} if, for each finite $K\subseteq G$ and $\omega\in\{0,1\}^K$, \[\lim_{n\to\infty}\frac{|\{g\in G\colon\text{$Kg\subseteq\Phi_n$ and $\alpha(hg)=\omega(h)$ for each $h\in K$}\}|}{|\Phi_n|}=\frac{1}{2^{|K|}}.\]
The following classical lemma regarding F\o lner sequences, which states that they have a property analogous to thickness, assures us that the notion of $\Phi$-normality is meaningful.

\begin{lemma}\label{topnormal-nondegeneracy}
Let $\Phi$ be a F\o lner sequence in a  countably infinite  cancellative semigroup $G$. Let $K\subseteq G$ be finite. Then there exists $N\in\N$ such that, for each $n\geq N$, $Kg_n\subseteq\Phi_n$ for some $g_n\in G$.
\end{lemma}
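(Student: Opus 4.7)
The plan is to find $g_n$ inside $\Phi_n$ itself. Set
\[
T_n = \{g \in \Phi_n \colon Kg \subseteq \Phi_n\},
\]
and show that $|T_n|/|\Phi_n| \to 1$; in particular, $T_n$ is nonempty for all sufficiently large $n$, and any element of $T_n$ serves as $g_n$. So the task reduces to bounding $|\Phi_n \setminus T_n|$.

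To that end, observe that $g \in \Phi_n$ fails to lie in $T_n$ exactly when some $k \in K$ satisfies $kg \notin \Phi_n$, so
\[
\Phi_n \setminus T_n \;=\; \bigcup_{k \in K}\{g \in \Phi_n \colon kg \notin \Phi_n\}.
\]
For each fixed $k \in K$, left cancellativity makes the map $g \mapsto kg$ an injection of $\Phi_n$ onto $k\Phi_n$, and this injection restricts to a bijection between $\{g \in \Phi_n \colon kg \notin \Phi_n\}$ and $k\Phi_n \setminus \Phi_n$. Hence
\[
|\{g \in \Phi_n \colon kg \notin \Phi_n\}| \;=\; |k\Phi_n \setminus \Phi_n|,
\]
which is $o(|\Phi_n|)$ by the (left) Følner property of $\Phi$.

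Since $K$ is finite, a union bound over $k \in K$ converts the pointwise Følner estimates into $|\Phi_n \setminus T_n|/|\Phi_n| \to 0$, whence $|T_n|/|\Phi_n| \to 1$. Choosing $N$ large enough that $T_n \neq \varnothing$ for $n \geq N$, any $g_n \in T_n$ gives $Kg_n \subseteq \Phi_n$.

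I do not expect a real obstacle here; the argument is essentially bookkeeping that converts the Følner condition into simultaneous control of finitely many left translates. The only subtle point, given that $G$ is merely a cancellative semigroup and not a group, is the explicit appeal to left cancellativity to identify the cardinality of $\{g \in \Phi_n \colon kg \notin \Phi_n\}$ with that of $k\Phi_n \setminus \Phi_n$ without invoking inverses.
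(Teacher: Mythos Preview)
Your proof is correct and is essentially the same as the paper's: both arguments show that the set $\Phi_n\cap\bigcap_{k\in K}k^{-1}\Phi_n$ is nonempty for large $n$ by combining the F\o lner condition with left cancellativity and a pigeonhole/union bound. The only cosmetic difference is that the paper phrases it in terms of the good sets $h^{-1}\Phi_n\cap\Phi_n$ each having density exceeding $1-1/|K|$ in $\Phi_n$, while you phrase it complementarily in terms of the bad set $\Phi_n\setminus T_n$ having density tending to $0$.
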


\begin{proof}
Find $N$ large enough that, for each $n\geq N$ and $h\in K$, \[1-\frac{1}{|K|}<\frac{|\Phi_n\cap h\Phi_n|}{|\Phi_n|}=\frac{|h(h^{-1}\Phi_n\cap\Phi_n)|}{|\Phi_n|}=\frac{|h^{-1}\Phi_n\cap\Phi_n|}{|\Phi_n|}.\] Then $\bigcap_{h\in K}h^{-1}\Phi_n\neq\varnothing$, so let $g_n$ be an element of this set. Then $Kg_n\subseteq\Phi_n$.
\end{proof}

Now we define extreme non-averageability in $\{0,1\}^G$.
The following definitions are admittedly unwieldy because we wish to measure the frequency of occurrences \textit{without overlap} of our finite sequence.
This is because we are interested in the maximal frequency with which the finite sequence can occur, and when allowing for overlaps, this frequency is difficult to predict.
We would be pleased if a simpler equivalent definition of ENA in an arbitrary  countably infinite  amenable cancellative semigroup could be found.

Fix a finite $K\subseteq G$. Given a finite set $X\subseteq G$, define \[\mathcal Y_{K,X}=\{Y\subseteq X\colon\text{$KY\subseteq X$, $Ky\cap Kz=\varnothing$ for all $y,z\in Y$, and $|Y|$ is maximal}\}.\] Now let $\omega\in\{0,1\}^K$, and $\alpha\in\{0,1\}^G$, and define \[\overline d_\Phi(K,\omega,\alpha)=\limsup_{n\to\infty}\max_{Y\in\mathcal Y_{K,\Phi_n}}\frac{|\{y\in Y\colon\text{$\alpha(hy)=\omega(h)$ for all $h\in K$}\}|}{|Y|}.\]
We observe that $\overline d_\Phi(K,\omega,\alpha)$ is well-defined since $\mathcal Y_{K,\Phi_n}=\{\varnothing\}$ for only finitely many $n$ by Lemma \ref{topnormal-nondegeneracy}. Define $\underline d_\Phi(K,\omega,\alpha)$ to be the corresponding $\liminf$. Hence, $\overline d_\Phi(K,\omega,\alpha)$ and $\underline d_\Phi(K,\omega,\alpha)$ give the (upper and lower) limiting frequencies, measured along $\Phi$, with which the finite binary word $\omega$ occurs without overlaps in $\alpha$. For convenience, we write \[\mathcal K=\{(K,\omega)\colon\text{$K\subseteq G$ is finite and $\omega\in\{0,1\}^K$}\}.\] We say that $\alpha$ is \textit{$\Phi$-ENA} if $\overline d_\Phi(K,\omega,\alpha)=1$ and $\underline d_\Phi(K,\omega,\alpha)=0$ for each $(K,\omega)\in\mathcal K$.

With these definitions in hand, we proceed to the main result of this subsection.

\begin{theorem}\label{topol-normal}
The set of $\Phi$-ENA elements of $\{0,1\}^G$ is comeager.
\end{theorem}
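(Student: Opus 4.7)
My plan is to express the set of $\Phi$-ENA sequences as a countable intersection of open dense subsets of $\{0,1\}^G$ and conclude via the Baire category theorem. The index set $\mathcal{K}$ is countable, so for each $(K,\omega) \in \mathcal{K}$ with $K \neq \varnothing$ and each $k, N \in \N$, I define
\[
U_{K,\omega,k,N} = \Big\{\alpha \in \{0,1\}^G : \exists\, n \geq N,\ \max_{Y \in \mathcal{Y}_{K,\Phi_n}} \frac{|\{y \in Y : \alpha(hy)=\omega(h)\text{ for all } h \in K\}|}{|Y|} > 1 - \tfrac{1}{k}\Big\}
\]
and $V_{K,\omega,k,N}$ analogously with the inequality $< 1/k$. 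Each of these sets is open, since the defining condition at any given $n$ depends only on the finitely many values of $\alpha$ on $K\Phi_n$. The set of $\Phi$-ENA sequences is then the intersection of the sets $U_{K,\omega,k,N}$ and $V_{K,\omega,k,N}$, so it will suffice to establish density of each.

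For density of $U_{K,\omega,k,N}$ inside a basic cylinder $V(L_1,L_2)$, I would take $n\geq N$ large and pick any maximal packing $Y \in \mathcal{Y}_{K,\Phi_n}$ (nonempty by Lemma \ref{topnormal-nondegeneracy}). Since the sets $Ky$ for $y \in Y$ are pairwise disjoint, at most $|L_1 \cup L_2|$ of them can meet $L_1 \cup L_2$; let $Y' \subseteq Y$ consist of the remaining $y$. On the disjoint family $\{Ky : y \in Y'\}$, which avoids $L_1 \cup L_2$, I set $\alpha(hy) = \omega(h)$ for each $h \in K$ without conflict, extending arbitrarily so that $\alpha \in V(L_1,L_2)$. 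Then $Y$ itself witnesses a frequency at least $1 - |L_1\cup L_2|/|Y|$. The Følner property together with cancellativity gives $|\{y \in \Phi_n : Ky \subseteq \Phi_n\}|/|\Phi_n| \to 1$, and the observation that $K^{-1}KY$ must contain every such $y$ (with $|K^{-1}Ky'| \leq |K|^2$ for each $y'$ by cancellativity) yields $|Y| \geq |\{y \in \Phi_n : Ky \subseteq \Phi_n\}|/|K|^2 \to \infty$. Hence for $n$ large the frequency exceeds $1 - 1/k$.

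Density of $V_{K,\omega,k,N}$ is the step I expect to be the main obstacle, because the maximum is taken over all maximal packings, so we cannot simply engineer one bad packing---we have to defeat every one at once. The trick is to make the total matching set in $\Phi_n$ small. Fix $h_0 \in K$, take $n$ large, and given $V(L_1,L_2)$, define $\alpha \in V(L_1,L_2)$ by setting $\alpha(x) = 1 - \omega(h_0)$ for each $x \in h_0\Phi_n \setminus (L_1\cup L_2)$ and extending arbitrarily. Any $y \in \Phi_n$ matching $\omega$ must have $h_0 y \in L_1 \cup L_2$, and by cancellativity there are at most $|L_1 \cup L_2|$ such $y$. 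Thus every maximal packing $Y \in \mathcal{Y}_{K,\Phi_n}$ contains at most $|L_1 \cup L_2|$ matching elements, giving frequency at most $|L_1 \cup L_2|/|Y| < 1/k$ for large $n$, using the same lower bound on $|Y|$ as in the previous paragraph.

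In summary, the conceptually delicate point is recognizing that $\underline{d}_\Phi(K,\omega,\alpha) = 0$ forces one to control all maximal packings simultaneously---resolved by the single-coordinate-flip construction---whereas $\overline{d}_\Phi(K,\omega,\alpha) = 1$ only requires exhibiting one good packing. Everything else reduces to straightforward bookkeeping with the Følner property and cancellativity.
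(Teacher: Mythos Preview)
Your proof is correct and takes a genuinely different route from the paper's. The paper first passes to a sparse subsequence $\Phi'=(\Phi_{n_k})$ with $(|\Phi_{n_1}|+\cdots+|\Phi_{n_{k-1}}|)/|\Phi_{n_k}|\to 0$, disjointifies to $\Psi_k=\Phi_{n_k}\setminus\bigcup_{m<k}\Phi_{n_m}$, and then builds cylinder sets $B_n^\pm$ supported on $K\Psi_n$; disjointness guarantees that $K\Psi_n$ eventually misses any prescribed $L_1\cup L_2$, so density comes for free without any quantitative estimate on packing sizes. One then transfers the conclusions from $\Psi$ back to $\Phi'$ and to $\Phi$. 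By contrast, you work directly with $\Phi_n$, accept that the cylinder $V(L_1,L_2)$ contaminates at most $|L_1\cup L_2|$ positions of any packing, and absorb that error using the lower bound $|Y|\geq |\{y\in\Phi_n:Ky\subseteq\Phi_n\}|/|K|^2\to\infty$, which follows from maximality and cancellativity. Your approach is more elementary in that it avoids the subsequence/disjointification device and the passage between three F{\o}lner sequences, at the cost of needing the explicit growth estimate on $|Y|$; the paper's approach trades that estimate for the disjointification trick. The key device for the $\underline d_\Phi=0$ half---fixing a single $h_0\in K$ and forcing $\alpha\equiv 1-\omega(h_0)$ on $h_0\Phi_n$ so that \emph{every} maximal packing has few matches---is essentially the same in both proofs.
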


\begin{proof}
For each $(K,\omega)\in\mathcal K$, let \[A_{K,\omega}=\{\alpha\in\{0,1\}^G\colon\text{$\overline d_\Phi(K,\omega,\alpha)=1$ and $\underline d_\Phi(K,\omega,\alpha)=0$}\},\] so that the set of $\Phi$-ENA elements of $\{0,1\}^G$ is $\bigcap_{(K,\omega)\in\mathcal K}A_{K,\omega}$. Fix $(K,\omega)\in\mathcal K$; it suffices to show that $A_{K,\omega}$ is comeager.

Find a subsequence $\Phi'=(\Phi_{n_k})_{k\in\N}$ of $\Phi$ which satisfies \[\lim_{k\to\infty}\frac{|\Phi_{n_1}|+\cdots+|\Phi_{n_{k-1}}|}{|\Phi_{n_k}|}=0.\] Define $\Psi_1=\Phi_1$ and, for $k\in\N$, $\Psi_{k+1}=\Phi_{n_{k+1}}\setminus\bigcup_{m=1}^k\Phi_{n_m}$; let $\Psi$ denote the F{\o}lner sequence $(\Psi_n)_{n\in\N}$. Fix $g\in K$. Now for each $n\in\N$ choose $Y_n\in\mathcal Y_{K,\Psi_n}$ and define
\begin{align*}
B_n^+&=\{\beta\in\{0,1\}^G\colon\text{for all $y\in Y_n$ and $h\in K$, $\beta(hy)=\omega(h)$}\},\\
B_n^-&=\{\beta\in\{0,1\}^G\colon\text{for all $y\in \Psi_n$ and $h\in K$, $\beta(hy)\neq\omega(g)$}\}.
\end{align*}
The definition of $\mathcal Y_{K,\Psi_n}$ ensures that the sets $B_n^\pm$ are nonempty for all large enough $n$, and they are clearly open since they are cylinder sets. In particular, $B_n^+=V(\omega^{-1}(\{1\})Y_n,\omega^{-1}(\{0\})Y_n)$, and either $B_n^-=V(\varnothing, K\Psi_n)$ or $B_n^-=V(K\Psi_n,\varnothing)$, depending on whether $\omega(g)$ is $1$ or $0$. The set $B_n^-$ is chosen this way to ensure that for all $Y_n\in \mathcal{Y}_{K,\Psi_n}$, there is no $y\in Y_n$ such that $\beta(hy)=\omega(h)$ for all $h\in K$. In particular, $\beta(gy)\neq \omega(g)$ for all $y$. Finally, define $C_n^\pm=\bigcup_{m=n}^\infty B_m^\pm$ for all $n\in\N$.

Let $(L_1,L_2)\in\mathcal L$. Since the sets $\Psi_n$ are disjoint and $L_1$ and $L_2$ are finite, cancellativity gives that for all large enough $n\in\mathbb{N}$, $B_{ n}^\pm\cap V(L_1,L_2)\neq\varnothing$ and hence $C_n^\pm\cap V(L_1,L_2)\neq\varnothing$.
It follows that $C_n^\pm$ is open and dense. Define $C=\bigcap(C_n^+\cap C_n^-)$, so that $C$ is a comeager subset of $\{0,1\}^G$. Moreover, if $\gamma\in C$, then
\begin{equation}\label{annd}
\overline d_\Psi(K,\omega,\gamma)=1\qquad\text{and}\qquad\underline d_\Psi(K,\omega,\gamma)=0
\end{equation}
by construction. Since $\lim_{k\to\infty}|\Phi_{n_k}|/|\Psi_k|=1$, \eqref{annd} must hold with respect to $\Phi'$ in addition to $\Psi$, and thus also with respect to $\Phi$. Hence $C$ is a comeager subset of $A_{K,\omega}$, as we had hoped.
\end{proof}

It is important to note that the property of $\Phi$-ENA, as well as $\Phi$-normality, is dependent on $\Phi$. The following theorem shows that this is true in a strong sense.

\begin{theorem}\label{normalena}
Let $\alpha\in\{0,1\}^G$ be disjunctive. Then there exist F{\o}lner sequences $\Psi^1$ and $\Psi^2$ in $G$ such that $\alpha$ is simultaneously $\Psi^1$-normal and $\Psi^2$-ENA.
\end{theorem}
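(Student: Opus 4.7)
The key observation is that if $\Phi = (\Phi_n)$ is any F\o lner sequence in $G$ and $(t_n)$ is any sequence in $G$, then $(\Phi_n t_n)$ is again a F\o lner sequence by right cancellativity (since $|\Phi_n t_n \triangle g \Phi_n t_n| = |\Phi_n \triangle g\Phi_n|$ and $|\Phi_n t_n| = |\Phi_n|$), and the statistic of $\alpha$ on $\Phi_n t_n$ for a pattern $(K,\omega)$ is determined by the restriction of $\alpha$ to $K\Phi_n \cdot t_n$. Since $\alpha$ is disjunctive, we can prescribe these finitely many values to agree with any template on $K\Phi_n$. The plan is to build both $\Psi^1$ and $\Psi^2$ as sequences of the form $(\Phi_n t_n)$ with carefully chosen $t_n$.

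For $\Psi^1$, use the Bernoulli measure $\mu$ on $\{0,1\}^G$ with $\mu(V(L_1,L_2)) = 2^{-|L_1|-|L_2|}$, which is $G$-invariant and ergodic. Theorem \ref{meanergodic} applied to each cylinder indicator gives $L^2$-convergence of the empirical averages along $\Phi$ to the corresponding integral, and a standard diagonal argument over the countable family of cylinders produces, after passing to a subsequence of $\Phi$, a single $\beta \in \{0,1\}^G$ for which the frequency $|\{g \in \Phi_n : \beta(kg) = \omega(k) \text{ for all } k \in K\}|/|\Phi_n|$ tends to $2^{-|K|}$ for every $(K,\omega) \in \mathcal K$. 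Enumerate $\mathcal K = \{(K_i,\omega_i)\}_{i \in \N}$, set $L_n = \bigcup_{i \leq n} K_i \Phi_n$, and use disjunctivity of $\alpha$ (applied to $L_n \cap \beta^{-1}(1)$ and $L_n \cap \beta^{-1}(0)$) to pick $t_n \in G$ with $\alpha(\ell t_n) = \beta(\ell)$ for all $\ell \in L_n$. Then for each fixed $i$ and all $n \geq i$, right cancellativity identifies the frequency of the pattern $(K_i,\omega_i)$ in $\alpha$ along $\Psi^1_n = \Phi_n t_n$ with that of $\beta$ along $\Phi_n$, giving $\Psi^1$-normality.

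For $\Psi^2$, enumerate the countable set $\{(K,\omega) \in \mathcal K : K \neq \varnothing\} \times \{H, L\}$ so every pair appears infinitely often, and label stage $n$ by $((K_n,\omega_n), \tau_n)$; choose $m_n \to \infty$ fast enough to ensure F\o lner convergence. At an $H$-stage, pick a maximum $K_n$-packing $F'_n \in \mathcal Y_{K_n, \Phi_{m_n}}$ and, by disjunctivity applied to the finite sets $\{kg : k \in K_n,\, g \in F'_n,\, \omega_n(k) = 1\}$ and $\{kg : k \in K_n,\, g \in F'_n,\, \omega_n(k) = 0\}$ (which are disjoint by the packing property and right cancellativity), choose $t_n$ with $\alpha(kg t_n) = \omega_n(k)$ for all $(k,g) \in K_n \times F'_n$. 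At an $L$-stage, fix $k_0 \in K_n$ and, by disjunctivity, choose $t_n$ with $\alpha(k_0 g t_n) = 1 - \omega_n(k_0)$ for all $g \in \Phi_{m_n}$. Set $\Psi^2_n = \Phi_{m_n} t_n$. Right cancellativity gives a bijection $\mathcal Y_{K_n, \Phi_{m_n}} \leftrightarrow \mathcal Y_{K_n, \Psi^2_n}$ via $Y \mapsto Yt_n$, so at $H$-stages the maximum packing $F'_n t_n$ has every element matching the pattern (fraction $1$), while at $L$-stages every element of $\Psi^2_n$ fails the pattern at coordinate $k_0$ (fraction $0$ for every packing). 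Dovetailing yields $\overline d_{\Psi^2}(K,\omega,\alpha) = 1$ and $\underline d_{\Psi^2}(K,\omega,\alpha) = 0$ for every nonempty $K$; the case $K = \varnothing$ is vacuous and may be excluded from the definition.

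The main obstacle is securing the $\Phi$-normal template $\beta$ for $\Psi^1$: the mean ergodic theorem alone gives only $L^2$-convergence, so one must either invoke Lindenstrauss's pointwise ergodic theorem (along a tempered sub-F\o lner of $\Phi$) or perform the countable diagonal extraction sketched above. A minor secondary issue in the non-group semigroup setting is that the counting set $\{g \in G : Kg \subseteq \Psi_n\}$ appearing in the definition of $\Phi$-normality may extend beyond $Gt_n$, but any such extra elements form a F\o lner-negligible subset and do not affect the limiting statistics.
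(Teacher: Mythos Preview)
Your approach is correct and shares the paper's core idea: right-translate a fixed F\o lner sequence $\Phi$ by elements $t_n$ chosen via disjunctivity so that $\alpha$ copies a prescribed template on each window, and observe that $(\Phi_n t_n)$ is again F\o lner. The paper's proof, however, is considerably shorter for two reasons.

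First, the paper does not build the templates from scratch. It simply fixes a $\Phi$-normal element $\beta_1$ (whose existence is supplied by \cite{BDM}) and a $\Phi$-ENA element $\beta_2$ (whose existence is Theorem~\ref{topol-normal}), and then for each $n$ uses disjunctivity to find $g_n^i$ with $\alpha(\phi g_n^i)=\beta_i(\phi)$ for all $\phi\in\Phi_n$, setting $\Psi_n^i=\Phi_n g_n^i$. Your ergodic-theoretic construction of $\beta$ and your dovetailing construction for ENA are essentially re-proving those two inputs.

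Second, you match $\alpha$ to the template on the enlarged set $L_n=\bigcup_{i\le n}K_i\Phi_n$, but this is unnecessary: in both the $\Phi$-normality and $\Phi$-ENA definitions used in the paper, the counts range only over $g$ with $Kg\subseteq\Phi_n$ (respectively $Y$ with $KY\subseteq\Phi_n$), so only the values of the configuration on $\Phi_n$ itself matter. Matching on $\Phi_n$ alone already transfers all the relevant statistics from $\beta_i$ to $\alpha$ along $\Psi^i$. This also dissolves your ``main obstacle'' and makes the diagonal extraction for normality superfluous.

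Your closing remark about the non-group case (possible $g$ with $Kg\subseteq\Phi_n t_n$ but $g\notin Gt_n$) is a legitimate technical wrinkle that the paper leaves implicit as well; it does not affect the argument once one notes that the set $\{g':Kg'\subseteq\Phi_n\}$ already has size $(1-o(1))|\Phi_n|$, so any surplus elements are asymptotically negligible.
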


\begin{proof}
Let $\Phi$ be a F\o lner sequence of $G$, and suppose  $\beta_1,\beta_2\in\{0,1\}^G$ are $\Phi$-normal and $\Phi$-ENA, respectively. For each $n\in\N$ and $i\in\{1,2\}$, define $X_n^i=\Phi_n\cap\beta_i^{-1}(\{1\})$ and $Y_n^i=\Phi_n\cap\beta_i^{-1}(\{0\})$ and find $g_n^i\in G$ such that $\alpha\in V(X_n^ig_n^i,Y_n^ig_n^i)$. Then define the F{\o}lner sequences $\Psi^i=(\Psi_n^i)_{n\in\N}$ by $\Psi_n^i=\Phi_ng_n^i$. These F{\o}lner sequences have the desired properties.
\end{proof}

\section{Topological dynamics and piecewise syndeticity}\label{topol-dynamics}

Let $G$ denote any  countably infinite  (not necessarily cancellative) semigroup.
In this section we explore piecewise syndeticity of subsets of $G$ in terms of dynamical properties of elements of $\{0, 1\}^G$ and their orbit closures.
Some of the results presented below are well-known for subsets of $\N$.
Using these results we revisit and examine in greater detail the comparison between van der Waerden's theorem and Szemer\'edi's theorem made in the introduction.

We adopt the notation of the previous section.
For each $g\in G$, let $\rho_g\colon G\to G$ be the map $h\mapsto hg$. We will consider the dynamical system $(\{0,1\}^G,G)$, where $G$ acts on $\{0,1\}^G$ by $g\alpha=\alpha\circ\rho_g$.
In the familiar settings $G=\N$ and $G=\Z$, this is just the left-shift action on infinite one- or two-sided sequences.
In light of the natural correspondence between $\{0,1\}^G$ and $\mathcal{P}(G)$, we may view this dynamical system instead as $(\mathcal{P}(G), G)$. In this case, $g$ acts on $\mathcal{P}(G)$ by $A \mapsto Ag^{-1}$.
This action is continuous, since for any $(L_1,L_2)\in\mathcal L$, we have $g^{-1}V(L_1,L_2)=V(L_1g,L_2g)$  (for this identity to hold in the non-cancellative case, we adopt the convention that $V(A,B) = \varnothing$ if $A\cap B\neq \varnothing$, in view of the fact that $g^{-1}V(L_1,L_2) = \varnothing$ if and only if  $L_1g\cap L_2g \neq \varnothing$) .

Given $\alpha\in\{0,1\}^G$, we define the \textit{orbit closure} of $\alpha$ to be
\[
\mathcal O(\alpha)=\overline{\{g\alpha\colon g\in G\}}.
\]
Now fix an increasing sequence $(F_n)$ of finite subsets of $G$ such that $\bigcup F_n=G$.
Let us note that for any $\alpha\in\{0,1\}^G$, we have $\beta\in\mathcal O(\alpha)$ if and only if, for all $n\in\N$, there exists $g\in G$ such that $\beta(x)=(g\alpha)(x)$ for each $x\in F_n$.
This simple but crucial property of orbit closures in the system $(\{0,1\}^G, G)$ follows immediately from the definition of the product topology on $\{0,1\}^G$.

We illustrate what this means in the familiar case $G=\N$. In this setting, $\beta \in \mathcal{O}(\alpha)$ if and only if, for any $n\in\N$, $\alpha$ can be shifted to agree with $\beta$ on $\{1,\dots,n\}$.
Put another way, $\beta \in \mathcal{O}(\alpha)$ means that one can find arbitrarily long initial segments of $\beta$ as subwords of $\alpha$.

Before stating the results, we give convenient reformulations of the notions of syndeticity, piecewise syndeticity, and thickness in terms of indicator functions. An element $\alpha\in\{0,1\}^G$ is syndetic if and only if there exists a finite set $H\subseteq G$ such that, for each $x\in G$, we have $\alpha(hx)=1$ for some $h\in H$.
Similarly, $\alpha$ is piecewise syndetic if and only if there exists a finite $H\subseteq G$ and an infinite sequence $(g_i)$ in $G$ such that, for each $n\in\N$ and $x\in F_n$, we have $\alpha(hxg_n)=1$ for some $h\in H$.
Finally, $\alpha$ is thick if and only if $\mathbf1_G \in \mathcal{O}(\alpha)$, so by Lemma \ref{ST-duality}, $\alpha$ is syndetic if and only if $\mathbf1_\varnothing \notin \mathcal{O}(\alpha)$.

\begin{lemma}\label{psdynamics} An element $\alpha\in\{0,1\}^G$ is piecewise syndetic if and only if the orbit closure $\mathcal O(\alpha)$ contains a syndetic element.
\end{lemma}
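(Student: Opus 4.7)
The plan is to verify both implications using the description of the orbit closure: $\beta \in \mathcal{O}(\alpha)$ iff for every finite $F \subseteq G$ there exists $g \in G$ such that $\beta$ and $g\alpha$ agree on $F$. I will combine this with the reformulations of piecewise syndeticity and syndeticity given just before the lemma.

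For the ``only if'' direction, suppose $\alpha$ is piecewise syndetic, witnessed by a finite $H \subseteq G$ and a sequence $(g_n)$ in $G$ such that for each $n \in \N$ and each $x \in F_n$, there exists $h \in H$ with $\alpha(hxg_n) = 1$, i.e., $(g_n\alpha)(hx) = 1$. By compactness of $\{0,1\}^G$, pass to a subsequence so that $g_{n_k}\alpha \to \beta$ in the product topology, so $\beta \in \mathcal{O}(\alpha)$. I claim $\beta$ is syndetic with constant $H$. Fix $x \in G$; for all sufficiently large $k$ we have $x \in F_{n_k}$, so some $h_k \in H$ gives $(g_{n_k}\alpha)(h_kx) = 1$. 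Since $H$ is finite, pigeonhole produces a single $h \in H$ that occurs for infinitely many $k$; pointwise convergence at $hx$ then yields $\beta(hx) = 1$, establishing syndeticity.

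For the ``if'' direction, suppose $\beta \in \mathcal{O}(\alpha)$ is syndetic, witnessed by a finite $H \subseteq G$. For each $n \in \N$, the set $HF_n$ is finite, so by the characterization of the orbit closure recalled above there is some $g_n \in G$ such that $\beta(y) = (g_n\alpha)(y) = \alpha(yg_n)$ for every $y \in HF_n$. Given $x \in F_n$, syndeticity of $\beta$ supplies some $h \in H$ with $\beta(hx) = 1$, and since $hx \in HF_n$ this forces $\alpha(hxg_n) = 1$. This is exactly the reformulation of piecewise syndeticity of $\alpha$ with the same witnessing set $H$ and sequence $(g_n)$.

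I do not anticipate any real obstacle: the content is bookkeeping between product-topology convergence and the finitary witnessing data for (piecewise) syndeticity. The one mild care is to ensure that in the ``only if'' direction we match a single $h \in H$ across infinitely many indices before passing to the pointwise limit—handled by pigeonhole on the finite set $H$.
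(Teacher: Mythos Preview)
Your proof is correct and follows essentially the same approach as the paper. The only cosmetic difference is in the ``only if'' direction: you invoke compactness of $\{0,1\}^G$ to extract a convergent subsequence $g_{n_k}\alpha \to \beta$, while the paper constructs the limit $\beta$ by hand via a nested pigeonhole/diagonal argument on the finite sets $F_m$; the ``if'' directions are virtually identical.
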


\begin{proof}
Suppose $\alpha\in\{0,1\}^G$ is piecewise syndetic and find a finite set $H\subseteq G$ and an infinite sequence $(g_n)$ in $G$ which witnesses $\alpha$'s piecewise syndeticity, in the sense of the previous paragraph.
We claim that there is a decreasing sequence $(N_m)$ of infinite subsets of $\N$ with the property that $\alpha(xg_{n_1})=\alpha(xg_{n_2})$ for each $x\in F_m$ and $n_1, n_2\in N_m$.
Indeed, for any $m$, there are finitely many functions $F_m\to\{0,1\}$, so such a sequence can easily be constructed inductively by applying the pigeonhole principle.
Hence we can define $\beta\in\{0,1\}^G$ by $\beta(x)=\alpha(xg_n)=(g_n\alpha)(x)$ for any $x\in F_m$ and $n\in N_m$.
By construction, $\beta\in\mathcal O(\alpha)$. To show that $\beta$ is syndetic, let $x\in G$, find $m$ such that $x\in F_m$, and find $n\in N_m$ with $n\geq m$. Since $x\in F_n$, we can find $h\in H$ such that $\alpha(hxg_n)=1$.
Then $\beta(hx)=\alpha(hxg_n)=1$, so $\beta$ is syndetic.

Now suppose $\alpha\in\{0,1\}^G$ and $\beta\in\mathcal O(\alpha)$ is syndetic. Let $H\subseteq G$ be a finite set which witnesses $\beta$'s syndeticity. Fix $n\in\N$ and find $m\in\N$ such that $HF_n\subseteq F_m$.
Find $g_m\in G$ such that $\beta(y)=(g_m\alpha)(y)$ for each $y\in F_m$.
Let $x\in F_n$ and find $h\in H$ for which $\beta(hx)=1$.
Then $hx\in F_m$, so $\alpha(hxg_m)=\beta(hx)=1$.
This proves that $\alpha$ is piecewise syndetic.
\end{proof}






Recall that \textit{subsystem} of a dynamical system $(X,G)$ is a nonempty closed $G$-invariant subset of $X$. A \textit{minimal subsystem} is a subsystem which is itself a minimal dynamical system.
By Zorn's lemma, any dynamical system has a minimal subsystem (see, e.g., \cite[Theorem 2.22]{GH}).
We will call the subsystem of $(\{0,1\}^G,G)$ consisting of the fixed point $\mathbf1_\varnothing$ (the zero function) the \textit{trivial subsystem}.

The following theorem is a restatement of Lemma \ref{psdynamics} in terms of the minimal subsystems of orbit closures.

\begin{theorem}\label{pssubsystems}
An element $\alpha\in\{0,1\}^G$ is piecewise syndetic if and only if $\mathcal O(\alpha)$ possesses a nontrivial minimal subsystem.
\end{theorem}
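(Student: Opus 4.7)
In view of Lemma \ref{psdynamics}, it suffices to show that $\mathcal{O}(\alpha)$ contains a syndetic element if and only if $\mathcal{O}(\alpha)$ possesses a nontrivial minimal subsystem. The plan is to prove the two directions separately, with the main technical ingredient being the observation that syndeticity in $\{0,1\}^G$ is preserved both under the $G$-action and under taking orbit closures.

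For the forward direction, suppose $\beta \in \mathcal{O}(\alpha)$ is syndetic, and let $H \subseteq G$ be a finite witness, so that for every $x \in G$ some $h \in H$ satisfies $\beta(hx) = 1$. First I would verify that every $g\beta$ is syndetic with the same witness $H$: for any $x \in G$, applying syndeticity of $\beta$ to $xg$ yields $h \in H$ with $\beta(hxg) = 1$, i.e., $(g\beta)(hx) = 1$. Next, I would show that this uniform syndeticity passes to the closure $\mathcal{O}(\beta)$: if $\gamma \in \mathcal{O}(\beta)$ and $x \in G$, then since $\gamma$ agrees with some $g\beta$ on the finite set $Hx$, syndeticity of $g\beta$ supplies $h \in H$ with $\gamma(hx) = (g\beta)(hx) = 1$. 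In particular $\mathbf{1}_\varnothing \notin \mathcal{O}(\beta)$. Applying Zorn's lemma to $\mathcal{O}(\beta) \subseteq \mathcal{O}(\alpha)$ produces a minimal subsystem $M$, and since $M \subseteq \mathcal{O}(\beta)$ every element of $M$ is syndetic, so $M$ is nontrivial.

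For the backward direction, let $M \subseteq \mathcal{O}(\alpha)$ be a nontrivial minimal subsystem. Since $\mathbf{1}_\varnothing$ is a $G$-fixed point, minimality of $M$ together with nontriviality forces $\mathbf{1}_\varnothing \notin M$. Pick any $\beta \in M$ and any $y \in G$ with $\beta(y) = 1$, and consider the relatively open subset $V = \{\gamma \in M : \gamma(y) = 1\}$ of $M$, which is nonempty since it contains $\beta$. By minimality of $M$, for each $\gamma \in M$ the orbit $\{g\gamma : g \in G\}$ is dense in $M$ and hence meets $V$; equivalently, $M = \bigcup_{g \in G} g^{-1}V$. Since $M$ is compact and each $g^{-1}V$ is relatively open, I extract a finite subcover indexed by some finite $H \subseteq G$, giving $M = \bigcup_{h \in H} h^{-1}V$. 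Unwinding this, for every $g \in G$ the element $g\beta \in M$ lies in some $h^{-1}V$, which says $\beta(yhg) = (h(g\beta))(y) = 1$. Thus $\beta$ is syndetic with finite witness $yH$, and Lemma \ref{psdynamics} concludes that $\alpha$ is piecewise syndetic.

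The main subtlety I anticipate is not any single deep step but rather being careful with the right-handed conventions: the action $(g\alpha)(x) = \alpha(xg)$ means that a shift $g\alpha$ probes $\alpha$ at $xg$, and the set-theoretic translation $A \mapsto Ag^{-1}$ needs to be handled cleanly since $G$ is only a semigroup without identity or inverses. All the arguments above are designed to avoid inverses and use only the defining property of the product topology (agreement on finite sets) and the standard compactness-plus-minimality argument that reproduces the familiar fact ``a nonempty open set in a minimal system is syndetic''.
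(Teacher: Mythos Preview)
Your proof is correct and follows essentially the same route as the paper: reduce via Lemma~\ref{psdynamics} to relating the existence of a syndetic element in $\mathcal{O}(\alpha)$ with the existence of a nontrivial minimal subsystem, then use that $\mathbf{1}_\varnothing$ is a fixed point and that syndeticity is detected by $\mathbf{1}_\varnothing \notin \mathcal{O}(\beta)$. The only difference is presentational: the paper invokes the already-stated reformulation ``$\alpha$ is syndetic if and only if $\mathbf{1}_\varnothing \notin \mathcal{O}(\alpha)$'' directly, whereas you unpack this inline---in the forward direction by showing uniform syndeticity passes to $\mathcal{O}(\beta)$, and in the backward direction by running the standard compactness-plus-minimality argument that return times to a nonempty open set are syndetic.
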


In particular, if $\alpha$ is discordant, $\mathcal{O}(\alpha)$ does not have a nontrivial minimal subsystem. Moreover, sets of positive upper density having this property are exactly the discordant sets.

\begin{proof}
Suppose $\alpha\in\{0,1\}^G$ is piecewise syndetic. By Lemma \ref{psdynamics}, there exists $\beta\in\mathcal O(\alpha)$ which is syndetic. Then $\mathcal O(\beta)\subseteq\mathcal O(\alpha)$ and $\mathbf1_\varnothing\notin\mathcal O(\beta)$, so $\mathcal O(\beta)$ has a nontrivial minimal subsystem.
It follows that $\mathcal{O}(\alpha)$ has a minimal subsystem as well.

Conversely, suppose $\mathcal O(\alpha)$ possesses a nontrivial minimal subsystem $X$. Then $\mathbf1_\varnothing\notin X$, so every element of $X$ is syndetic. Thus, by Lemma \ref{psdynamics}, $\alpha$ is piecewise syndetic.
\end{proof}

The existence or nonexistence of nontrivial minimal subsystems in $\mathcal{O}(\alpha)$ has significant combinatorial consequences, which we presently examine.

Let us specialize to the case $G = \N$.
As stated in the introduction, there is an equivalent formulation of van der Waerden's theorem as a topological recurrence theorem:

\begin{theorem}[Topological van der Waerden's theorem]\label{dynamicalvdw}
Let $(X,T)$ be a minimal\footnote
{In the sources, one will not see the condition of minimality explicitly imposed in the formulations of Theorem \ref{dynamicalvdw}.
We of course do not lose generality by making this assumption as we may always pass to a minimal subsystem.
Moreover, the topological proofs of this theorem of which the authors are aware, with the notable exception of \cite[Proposition L]{BL}, use minimality in a crucial way.
}
topological dynamical system where $X$ is a compact metric space.
Then for any $\ell\in\N$ and $\varepsilon>0$ there exists $d\in\N$ and $x \in X$ such that the points $x,T^dx,\dots,T^{(\ell-1)d}x$ are within $\varepsilon$ of each other.
\end{theorem}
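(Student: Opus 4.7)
The plan is to prove Theorem~\ref{dynamicalvdw} by induction on $\ell$, following the classical chain-of-focal-progressions argument of Furstenberg and Weiss. The base case $\ell = 1$ is trivial: any singleton has diameter zero. For the inductive step, assume the conclusion for $\ell$-term progressions; we prove it for $(\ell + 1)$-term progressions. Fix $\varepsilon > 0$.

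The construction proceeds recursively. I plan to build a sequence of base points $x_0, x_1, \ldots \in X$, step sizes $n_1, n_2, \ldots \in \N$, and rapidly decreasing tolerances $\delta_k > 0$ with the following features: at each stage $k$, the inductive hypothesis yields an $\ell$-term arithmetic progression of diameter less than $\delta_k$ based at some point, which, via the minimality of $(X, T)$, we arrange to be close to a prescribed target---namely, close to $T^{N_k}x_{k-1}$ for a cumulative shift $N_k$ assembled from the previously chosen step sizes. Each $\delta_k$ is chosen only after $n_1, \ldots, n_k$ are fixed, and is taken small enough that the uniform continuity of the iterates $T^{n_1}, \ldots, T^{n_k}$ transports $\delta_k$-proximity to $\varepsilon$-proximity under all later compositions that will arise in the chain.

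After sufficiently many stages, the compactness of $X$ forces two base points $x_j$ and $x_k$ with $j < k$ to satisfy $d(x_j, x_k) < \varepsilon/2$. The cumulative shift $d = n_{j+1} + n_{j+2} + \cdots + n_k$ will then serve as the common difference of the sought $(\ell + 1)$-term progression: the focused $\ell$-AP based at $x_k$ supplies $\ell$ of the points, and the proximity $d(x_j, x_k) < \varepsilon/2$ combined with the telescoping structure of the chain supplies the $(\ell + 1)$st, with all pairwise distances controlled below $\varepsilon$ by construction.

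The principal obstacle will be the uniform-continuity bookkeeping---ensuring that the diameter of an $\ell$-AP at stage $k$ does not blow up when it is transported through many compositions of iterates of $T$ of \emph{a priori} unknown size. This is resolved by choosing the tolerances $\delta_k$ in a way that anticipates the continuity moduli of all later iterates, a technique that is standard but technically delicate. No new ingredients beyond the inductive hypothesis, minimality, and compactness of $X$ are needed; the argument is purely topological, consistent with its role as the topological counterpart to van der Waerden's theorem.
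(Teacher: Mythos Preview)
Your proposal is a correct outline of the classical Furstenberg--Weiss induction, and it is precisely the argument the paper has in mind: the paper does not supply its own proof of Theorem~\ref{dynamicalvdw} but simply cites \cite[Theorem 1.4]{FW} (with \cite[Theorem 2.6]{Fur} and \cite[Corollary 3.9]{Be2} as alternatives), all of which proceed by the chain-of-progressions induction you describe. So there is no discrepancy---you are sketching the content of the references the paper defers to.
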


\begin{proof}
This is a special case of \cite[Theorem 1.4]{FW}.
See also \cite[Theorem 2.6]{Fur} or \cite[Corollary 3.9]{Be2}.
\end{proof}

From here the classical van der Waerden's theorem can be derived along the following lines.
Suppose $\alpha \in \{0,1\}^\N$ is piecewise syndetic.
Then $\mathcal{O}(\alpha)$ possesses a nontrivial minimal subsystem $Y$ by Theorem \ref{pssubsystems}.
Since $\mathbf1_\varnothing \notin Y$, every element of $Y$ is syndetic.
Giving $\{0,1\}^\N$ the product metric described in Subsection \ref{cantor} and applying Theorem \ref{dynamicalvdw} inside $(Y,T)$ shows that, for any $\ell \in \N$, the system $Y$ possesses an element with a length-$\ell$ arithmetic progression.
 Since $Y\subset\mathcal{O}(\alpha)$, for any $\ell\in\mathbb{N}$, there is an element of $\mathcal{O}(\alpha)$ with a length-$\ell$ arithmetic progression.  It follows that $\alpha$ is AP-rich.
(As we saw in the introduction, this is equivalent, via the partition regularity of piecewise syndetic sets, to the formulation of van der Waerden's theorem in terms of partitions of $\N$.)

This discussion supports our position taken in the introduction on the distinction between van der Waerden's and Szemer\'edi's theorems.
Specifically, if the orbit closure of any element of $\{0,1\}^\N$ having positive upper density were to possess a nontrivial minimal subsystem, then Szemer\'edi's theorem would follow immediately from the topological version of van der Waerden's theorem using the argument we gave in the preceding paragraph.

As our next combinatorial application of Theorem \ref{pssubsystems}, we will prove the partition regularity of piecewise syndetic sets from a dynamical perspective (cf. Theorem \ref{brown}).
We first address the following question: For which $\alpha\in\{0,1\}^G$ is $\mathcal O(\alpha)$ itself minimal? The answer is that every finite subword occurring in $\alpha$ must occur syndetically in $\alpha$, as the following lemma states. The result is well known in the case when $G$ is $\Z$ or $\N$; see, e.g., \cite[Proposition 1.22]{Fur}.

\begin{lemma}\label{minimalorbit}
Let $\alpha\in\{0,1\}^G$.
Then $\mathcal O(\alpha)$ is minimal if and only if, for all $(L_1,L_2)\in\mathcal L$, the set $\{g\in G\colon g\alpha\in V(L_1,L_2)\}$ is either empty or syndetic.
\end{lemma}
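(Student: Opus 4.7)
The plan is to recognize $\{g \in G \colon g\alpha \in V(L_1, L_2)\}$ as the set of return times of $\alpha$ to the clopen cylinder $V(L_1, L_2)$, and to prove the semigroup-theoretic analogue of the familiar fact (used in Lemma \ref{not-nowhere-dense}) that an action is minimal iff every nonempty return-time set is syndetic. The two structural features I will rely on are the clopenness of cylinder sets in $\{0,1\}^G$ and the associativity relation $h(g\alpha) = (hg)\alpha$; no identity element, cancellativity, or amenability of $G$ will be needed.

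For the forward direction, I would closely mirror the argument in Lemma \ref{not-nowhere-dense}. Assume $\mathcal O(\alpha)$ is minimal, and fix $(L_1,L_2) \in \mathcal L$ such that $S := \{g \in G \colon g\alpha \in V(L_1, L_2)\}$ is nonempty. Then $W := V(L_1, L_2) \cap \mathcal O(\alpha)$ is open and nonempty in $\mathcal O(\alpha)$ (it contains $g\alpha$ for any $g \in S$). Minimality ensures that every orbit in $\mathcal O(\alpha)$ is dense, so for each $\beta \in \mathcal O(\alpha)$ there is some $h \in G$ with $h\beta \in W$; hence $\{h^{-1}W \colon h \in G\}$ covers the compact space $\mathcal O(\alpha)$. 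Extracting a finite subcover yields a finite $H \subseteq G$ such that, for any $g \in G$, applying the cover to $g\alpha \in \mathcal O(\alpha)$ produces an $h \in H$ with $(hg)\alpha = h(g\alpha) \in W$, i.e., $hg \in S$. This gives $G = H^{-1}S$, so $S$ is syndetic.

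For the converse I would argue contrapositively: assuming $\mathcal O(\alpha)$ is not minimal, I will produce an $(L_1, L_2) \in \mathcal L$ for which the corresponding $S$ is nonempty but not syndetic. Pick a proper closed $G$-invariant nonempty $Y \subsetneq \mathcal O(\alpha)$, choose $\gamma \in \mathcal O(\alpha) \setminus Y$, and use that cylinder sets form a base of the topology to find $(L_1, L_2) \in \mathcal L$ with $\gamma \in V(L_1, L_2)$ and $V(L_1, L_2) \cap Y = \varnothing$. Approximating $\gamma$ by elements of the dense orbit of $\alpha$ and using that $V(L_1, L_2)$ is open shows that $S$ is nonempty. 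To rule out syndeticity, I would suppose toward a contradiction that $G = H^{-1}S$ for some finite $H \subseteq G$, pick any $\beta \in Y$, and take $k_n\alpha \to \beta$; by continuity, $(hk_n)\alpha = h(k_n\alpha) \to h\beta \in Y$ for each $h \in H$. Since $Y$ is disjoint from the clopen set $V(L_1, L_2)$, we get $(hk_n)\alpha \notin V(L_1, L_2)$ for all sufficiently large $n$, i.e., $hk_n \notin S$. The main obstacle here is achieving this uniformly across $h \in H$, but finiteness of $H$ resolves it at once: for $n$ large enough, $hk_n \notin S$ for every $h \in H$ simultaneously, contradicting $k_n \in H^{-1}S = G$.
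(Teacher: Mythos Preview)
Your argument is correct. The forward direction is identical to the paper's (both invoke the compactness argument from Lemma~\ref{not-nowhere-dense}). For the converse, however, you and the paper take genuinely different routes. The paper proves the direct implication in the generality of an arbitrary compact metric $G$-space: assuming syndetic return sets, it fixes $y\in\mathcal O(x)$ and shows $x\in\mathcal O(y)$ by choosing, for each small ball around $x$, a finite $H$ witnessing syndeticity and then using continuity of the action to push a nearby orbit point $gx\approx y$ back close to $x$ via some $h\in H$. Your proof instead argues the contrapositive entirely inside $\{0,1\}^G$: from non-minimality you extract a proper closed invariant $Y$, separate a point of $\mathcal O(\alpha)\setminus Y$ from $Y$ by a basic cylinder, and then use clopenness of cylinders together with invariance of $Y$ to defeat any putative finite $H$. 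Your approach is shorter and more transparent in this zero-dimensional setting, leaning on the clopen base rather than metric estimates; the paper's approach buys a statement valid for any compact metric system (which then specializes to the lemma once one notes that a nonempty open set contains a cylinder meeting the orbit, so the cylinder hypothesis already implies the hypothesis for all open sets).
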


\begin{proof}
This is a consequence of the following general phenomenon: Let $(X,G)$ be a topological dynamical system where $X$ is a metric space, and let $x\in X$. Then $\mathcal O(x)=\overline{\{gx\colon g\in G\}}$ is minimal if and only if, for each open $V\subseteq X$, $R_V(x)$ is either empty or syndetic. Clearly, if $\mathcal O(x)$ is minimal, then this condition holds (see the proof of Lemma \ref{not-nowhere-dense}).

Conversely, suppose $y\in\mathcal O(x)$. We will show that $x\in\mathcal O(y)$, which will prove that $\mathcal O(x)$ is minimal. To this end, fix $n\in\N$ and find a finite set $H\subseteq G$ such that $H^{-1}R_{B(1/(2n),x)}(x)=G$. Find a sufficiently small $\varepsilon>0$ so  that $hB(\varepsilon,y)\subseteq B(1/(2n),hy)$ for each $h\in H$. Next find $g\in G$ for which $gx\in B(\varepsilon,y)$ and find $h\in H$ satisfying $hg\in R_{B(1/(2n),x)}(x)$. Then $hgx\in B(1/(2n),hy)\cap B(1/(2n),x)$, i.e., $d(hy,x)<1/n$. This proves that $x\in\mathcal O(y)$, as claimed.
\end{proof}

Before continuing, we remark that, as in the previous section, the preceding can be easily generalized to statements about $\{0,1,\dots,r\}^G$.
In this setting, we say that $\alpha\in\{0,1,\dots,r\}^G$ is (piecewise) syndetic if $\mathbf1_{\alpha^{-1}(\{1,\dots,r\})}\in\{0,1\}^G$ is (piecewise) syndetic in the usual sense.
We continue to refer to $\{\mathbf1_\varnothing\}$ as the trivial subsystem.
We leave the proofs of the generalizations to the interested reader.
Using this, we give the promised result on the partition regularity of piecewise syndetic sets.
The following proof in the case $G=\Z$ can be found in \cite[Theorem 1.24]{Fur}.

\begin{theorem}\label{brown2}
Let $A\subseteq G$ be piecewise syndetic and suppose it is partitioned as $A=\bigcup_{i=1}^rC_i$. Then $C_i$ is piecewise syndetic for some $i$.
\end{theorem}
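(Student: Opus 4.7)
The plan is to leverage Theorem \ref{pssubsystems} and Lemma \ref{minimalorbit}, along with the generalizations to $\{0,1,\dots,r\}^G$ remarked upon just before the statement. Encode the partition as $\alpha \in \{0,1,\dots,r\}^G$ by setting $\alpha(g) = i$ if $g \in C_i$ and $\alpha(g) = 0$ if $g \notin A$. Since $\alpha^{-1}(\{1,\dots,r\}) = A$ is piecewise syndetic, $\alpha$ is piecewise syndetic in this generalized sense, so the orbit closure $\mathcal{O}(\alpha)$ admits a nontrivial minimal subsystem $Y$.

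Now pick any $\beta \in Y$. The first step is to show that $\beta^{-1}(\{i\})$ is a syndetic subset of $G$ for some $i \in \{1,\dots,r\}$. Since $\beta$ is not the zero function, there exist $g_0 \in G$ and $i \in \{1,\dots,r\}$ with $\beta(g_0) = i$. Applying the generalization of Lemma \ref{minimalorbit} to the cylinder $V = \{\gamma \in \{0,1,\dots,r\}^G : \gamma(g_0) = i\}$, which contains $\beta$ and hence meets the minimal system $\mathcal{O}(\beta) = Y$, shows that the return set $T = \{g \in G : g\beta \in V\} = \{g \in G : \beta(g_0 g) = i\}$ is syndetic. By definition $g_0 T \subseteq \beta^{-1}(\{i\})$, and the syndeticity of $T$ witnessed by $G = H^{-1}T$ promotes to syndeticity of $g_0 T$ via $G = (g_0 H)^{-1}(g_0 T)$; hence $\beta^{-1}(\{i\})$ is itself syndetic.

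Finally, I would transport this information back to $\{0,1\}^G$ using the continuous equivariant projection $\pi_i \colon \{0,1,\dots,r\}^G \to \{0,1\}^G$ sending $\gamma$ to $\mathbf{1}_{\gamma^{-1}(\{i\})}$. A standard compactness-plus-continuity argument gives $\pi_i(\mathcal{O}(\alpha)) = \mathcal{O}(\pi_i(\alpha)) = \mathcal{O}(\mathbf{1}_{C_i})$, so $\pi_i(\beta) = \mathbf{1}_{\beta^{-1}(\{i\})}$ lies in $\mathcal{O}(\mathbf{1}_{C_i})$ and is syndetic by the previous step. Lemma \ref{psdynamics} then concludes that $\mathbf{1}_{C_i}$, and hence $C_i$, is piecewise syndetic.

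The main obstacle I anticipate is minor and bookkeeping in nature: respecting the left/right conventions in a possibly noncommutative and non-cancellative semigroup, in particular justifying that $g_0 T$ inherits syndeticity from $T$ without appealing to inverses. Everything else is a direct application of results already in place, together with the routine verifications that $\pi_i$ is continuous and equivariant and interacts as expected with orbit closures.
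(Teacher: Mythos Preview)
Your proposal is correct and follows essentially the same route as the paper: encode the partition in $\{0,1,\dots,r\}^G$, pass to a nontrivial minimal subsystem of $\mathcal O(\alpha)$ via Theorem~\ref{pssubsystems}, use Lemma~\ref{minimalorbit} to extract a syndetic level set of some $\beta$ in that subsystem, and push it down to $\mathcal O(\mathbf 1_{C_i})$ to invoke Lemma~\ref{psdynamics}. Your treatment is in fact a bit more careful than the paper's---you avoid implicitly assuming an identity element when showing $\beta^{-1}(\{i\})$ is syndetic, and you make the equivariant projection $\pi_i$ and the equality $\pi_i(\mathcal O(\alpha))=\mathcal O(\mathbf 1_{C_i})$ explicit---but these are refinements rather than a different argument.
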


\begin{proof}
Define $\alpha\in\{0,1,\dots,r\}^G$ by $\alpha(g)=0$ if $g
\notin A$ and $\alpha(g)=i$ if $g\in C_i$. By Lemma \ref{pssubsystems}, $\mathcal O(\alpha)$ has a nontrivial minimal subsystem $X$. Let $\beta\in X$. By Lemma \ref{minimalorbit} and the nontriviality of $X$,  for all $i\in\text{Range}\,\beta$ the set $\{g\in G\colon\beta(g)=i\}$ is syndetic. Let $i\in\text{Range}\,\beta$.  Define $\beta'\in\{0,1\}^G$ by $\beta'(g)=1$ if $\beta(g)=i$ and $\beta'(g)=0$ otherwise. Then $\beta'$ is a syndetic element in $\mathcal O(\mathbf1_{C_i})$, so $C_i$ must be piecewise syndetic.
\end{proof}

It is worth mentioning that Theorem \ref{brown2} is easily proved with the help of topological algebra in $\beta G$, the Stone--\v Cech compactification of $G$, that is, the space of ultrafilters on $G$.
It is well known that $A \subseteq G$ is piecewise syndetic if and only if some shift $g^{-1}A$ lies in a minimal idempotent of $\beta G$ \cite[Theorem 4.43]{HS}.
From here partition regularity follows from the definition of an ultrafilter.
This approach to Ramsey theory---learning about subsets of $G$ by studying $\beta G$---has proved very fruitful, leading to, for example, a slick proof of Hindman's theorem (see \cite[Corollary 5.9]{HS} or \cite[Theorem 1.2]{Be}).
We do not pursue this further but direct the curious reader towards the references we have just mentioned.

\printbibliography

\end{document}